 \newtheorem{thm}{Theorem}[section]
 \newenvironment{thmprime}[1]{\thmalt}{\endthmalt}
 \newtheorem{cor}[thm]{Corollary}
 \newtheorem{prop}[thm]{Proposition}
 \theoremstyle{definition}
 \newtheorem{defn}[thm]{Definition}
 \newtheorem{rem}[thm]{Remark}
 \newtheorem*{ex}{Example}
 \numberwithin{equation}{section}
 \numberwithin{equation}{section}
\newcommand{\R}{{\mathbb R}}
\newcommand{\N}{{\mathbb N}}
\newcommand{\cF}{{\mathcal F}}
\newcommand{\cS}{{\mathcal S}}
\newcommand{\cO}{{\mathcal O}}
\newcommand{\su}{\subseteq}
\def\Wig{{\mathop{\rm Wig }}}
\def\BJ{{\mathop{\rm BJ }}}
\begin{document}

\title[Mutual estimate of representations and uncertainty principles]{Mutual estimates of time-frequency representations and uncertainty principles}

\author{Angela A. Albanese, Claudio Mele and Alessandro Oliaro}

\address{Angela A. Albanese, Dipartimento di Matematica e Fisica ``E. De Giorgi'', Universit\`a del Salento - C.P.193, I-73100 Lecce, Italy}
\address{Claudio Mele, Dipartimento di Matematica e Fisica ``E. De Giorgi'', Universit\`a del Salento - C.P.193, I-73100 Lecce, Italy}
\address{Alessandro Oliaro, Dipartimento di Matematica ``G. Peano'', Universit\`a di Torino - Via Carlo Alberto, 10, I-10123 Torino, Italy}

\begin{abstract}
In this paper we give different estimates between Lebesgue norms of quadratic time-frequency representations. We show that, in some cases, it is not possible to have such bounds in classical $L^p$ spaces, but the Lebesgue norm needs to be suitably weighted. This leads to consider weights of polynomial type, and, more generally, of ultradifferentiable type, and this, in turn, gives rise to use as functional setting the ultradifferentiable classes. As applications of such estimates we deduce uncertainty principles both of Donoho-Stark type and of local type for representations. \\[0.1cm]
{\bf Keywords:} Quadratic forms, time-frequency representations, uncertainty principles, ultradifferentiable spaces. \\[0.1cm]
{\bf 2020 Mathematics Subject Classification:} 42B10, 46F05, 46E30, 46F12.
\end{abstract}

\maketitle

\section{Introduction }\label{intro}

The classical Fourier analysis on $\mathbb{R}^N$ is based on the Fourier transform, defined as
$$
\hat{f}(\xi):=\int_{\mathbb{R}^N} e^{-ix\xi} f(x)\,dx,
$$
for $f\in L^1(\mathbb{R}^N)$, with standard extension to more general spaces of functions and distributions. In signal analysis the Fourier transform is interpreted as an instrument showing the frequencies that a certain signal $f$ contains. A drawback is that the information on when different frequencies occur in a (non-stationary) signal $f$ is hidden in the complex phase of $\hat{f}$, and this has led, in the last decades, to the proposal of several different time-frequency representations, in order to make this information more readable and clear. A time-frequency representation of a signal $f$ is a function (or distribution) of $(x,\xi)\in\mathbb{R}^{2N}$, giving the energy of the signal at time $x$ and frequency $\xi$; it is usually a quadratic form of $f$, and it is not uniquely chosen. There are in fact many different representations, each one with good features and disadvantages, and different representations better address different problems. This of course has to do with the use of such tools in applications, but also with their more abstract mathematical properties, as well as with the role that such representations have in other fields of Mathematics, such as partial differential equations, group representations, operator theory and so on. Since all such representations have common aims and meanings, it is reasonable to investigate in which aspects they are linked and how they can be, in some sense, \emph{controlled} one each other. Of course there are several well-known connections between different representations; there are classes of time-frequency distributions underlying common structures, and many comparisons have been made in the literature, showing similarities and differences.

In this paper we consider a new perspective, that has to do with the possibility to estimate the (weighted) $L^p$-norm of a time-frequency representation by a (weighted) $L^q$-norm of another one. This is motivated by the observation that the Donoho-Stark uncertainty principle for the short-time Fourier transform can be rephrased in terms of estimates between Lebesgue norms of the spectrogram and the Rihaczek form. We consider then the general problem of giving mutual estimates of couples of time-frequency representations. We prove that some of these estimates are not true when considering usual $L^p$ spaces, but are fulfilled when considering weighted $L^p$ spaces, for weights of polynomial type or more generally of exponential (ultradifferentiable) type. As an application, we prove new uncertainty principles of Donoho-Stark type involving time-frequency representations, as well as uncertainty principles of local type for quadratic forms.

In order to better explain motivations and results of this paper we recall some basic facts.

The classical Donoho-Stark uncertainty principle is based on the definition of $\varepsilon$ concentration; we say that a function $g\in L^2(\mathbb{R}^N)$ is $\varepsilon$-concentrated on a measurable set $V\subset\mathbb{R}^N$, $\varepsilon\in[0,1]$, if
$$
\int_V |f(x)|^2\,dx\geq (1-\varepsilon^2)\| f\|_2^2.
$$
\begin{thm}[Donoho-Stark uncertainty principle \cite{DS}]
Let $T,\Omega\subset\mathbb{R}^N$ be measurable sets, and $\varepsilon_T,\varepsilon_\Omega\geq 0$ with $\varepsilon_T+\varepsilon_\Omega\leq 1$. If there exists $f\in L^2(\mathbb{R}^N)$, $f\neq 0$, such that $f$ is $\varepsilon_T$-concentrated on $T$ and $\hat{f}$ is $\varepsilon_\Omega$-concentrated on $\Omega$, then
$$
m(T)m(\Omega)\geq (2\pi)^N (1-\varepsilon_T-\varepsilon_\Omega),
$$
where $m(\cdot)$ indicates the Lebesgue measure of the corresponding set.
\end{thm}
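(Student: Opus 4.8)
The plan is to encode the two concentration hypotheses as operator estimates and then to play a Hilbert--Schmidt \emph{upper} bound for the composition of the time- and band-limiting projections against an operator-norm \emph{lower} bound. Let $P_T$ denote the orthogonal projection $P_Tf=\chi_T f$ on $L^2(\R^N)$ and let $Q_\Omega$ denote the Fourier multiplier with symbol $\chi_\Omega$, so that $\widehat{Q_\Omega f}=\chi_\Omega\hat f$; both are orthogonal projections, and, transporting the frequency concentration through Plancherel's identity $\|\hat f\|_2^2=(2\pi)^N\|f\|_2^2$, the two hypotheses become exactly $\|(I-P_T)f\|_2\le\varepsilon_T\|f\|_2$ and $\|(I-Q_\Omega)f\|_2\le\varepsilon_\Omega\|f\|_2$. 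Normalizing $\|f\|_2=1$ and abbreviating $\eta_T:=\|(I-P_T)f\|_2\le\varepsilon_T$ and $\eta_\Omega:=\|(I-Q_\Omega)f\|_2\le\varepsilon_\Omega$, the projection identities give $\|P_Tf\|_2^2=1-\eta_T^2$ and $\|Q_\Omega f\|_2^2=1-\eta_\Omega^2$.

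First I would bound $A:=P_TQ_\Omega$ in Hilbert--Schmidt norm. Its kernel is $K(x,y)=\chi_T(x)(2\pi)^{-N}\int_\Omega e^{i(x-y)\xi}\,d\xi$, and Plancherel in the $y$-variable yields $\int_{\R^N}|K(x,y)|^2\,dy=(2\pi)^{-N}m(\Omega)\,\chi_T(x)$, whence
\[
\|A\|_{\mathrm{HS}}^2=\frac{m(T)\,m(\Omega)}{(2\pi)^N}.
\]
As the operator norm is dominated by the Hilbert--Schmidt norm, this already gives $\|A\|_{\mathrm{op}}^2\le m(T)m(\Omega)/(2\pi)^N$, and the theorem reduces to proving the lower bound $\|A\|_{\mathrm{op}}^2\ge 1-\varepsilon_T-\varepsilon_\Omega$.

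The crux, and the step where the \emph{linear} (rather than squared) constant must be earned, is this lower bound. The naive estimate $\|A\|_{\mathrm{op}}\ge\|P_TQ_\Omega f\|_2\ge 1-\varepsilon_T-\varepsilon_\Omega$ squares the single image $\|Af\|_2$ and so only delivers the weaker bound $m(T)m(\Omega)\ge(2\pi)^N(1-\varepsilon_T-\varepsilon_\Omega)^2$. Instead I would use that the operator norm of a product of two projections equals the cosine of the angle between their ranges,
\[
\|P_TQ_\Omega\|_{\mathrm{op}}=\sup\bigl\{|\langle u,w\rangle|:\ u\in\operatorname{ran}P_T,\ w\in\operatorname{ran}Q_\Omega,\ \|u\|_2=\|w\|_2=1\bigr\},
\]
and test it with $u=P_Tf/\|P_Tf\|_2$ and $w=Q_\Omega f/\|Q_\Omega f\|_2$. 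Because $\langle(I-P_T)f,f\rangle=\eta_T^2$ and $\langle(I-Q_\Omega)f,f\rangle=\eta_\Omega^2$ are real, all terms linear in $\eta_T,\eta_\Omega$ cancel and only quadratic ones survive:
\[
\|A\|_{\mathrm{op}}\ \ge\ \frac{\operatorname{Re}\langle P_Tf,Q_\Omega f\rangle}{\|P_Tf\|_2\,\|Q_\Omega f\|_2}\ \ge\ \frac{1-\eta_T^2-\eta_\Omega^2-\eta_T\eta_\Omega}{\sqrt{(1-\eta_T^2)(1-\eta_\Omega^2)}},
\]
where the last step uses $\operatorname{Re}\langle(I-P_T)f,(I-Q_\Omega)f\rangle\ge-\eta_T\eta_\Omega$.

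The remaining obstacle is purely elementary: I must verify that squaring the last expression still beats $1-\varepsilon_T-\varepsilon_\Omega$. Applying the scalar inequality
\[
(1-x^2-y^2-xy)^2\ \ge\ (1-x-y)(1-x^2)(1-y^2),\qquad x,y\ge0,\ x+y\le1,
\]
with $x=\eta_T$, $y=\eta_\Omega$ gives $\|A\|_{\mathrm{op}}^2\ge 1-\eta_T-\eta_\Omega\ge 1-\varepsilon_T-\varepsilon_\Omega$. To prove this inequality I would expand the difference of its two sides and, setting $s=x+y$ and $p=xy$, factor it as
\[
(x+y)\bigl[(1-s)^2(1+s)+2p(1-s)+p^2\bigr],
\]
which is manifestly nonnegative for $s\le1$ and $p\ge0$. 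Combining $\|A\|_{\mathrm{op}}^2\ge 1-\varepsilon_T-\varepsilon_\Omega$ with the Hilbert--Schmidt upper bound yields $m(T)m(\Omega)\ge(2\pi)^N(1-\varepsilon_T-\varepsilon_\Omega)$. I expect the genuinely load-bearing steps to be the identification of the operator norm with the cosine of the angle between the two ranges — this is precisely what raises the exponent from $2$ to $1$ — and the factorization of the scalar inequality above.
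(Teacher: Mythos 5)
Your proof is correct, and there is nothing in the paper to compare it against line by line: the statement is Theorem 1.1 of the introduction, which the authors recall from \cite{DS} without proof. So the relevant comparison is with the classical argument. Your setup is the classical one: the projections $P_T$ and $Q_\Omega$, the Hilbert--Schmidt computation $\|P_TQ_\Omega\|_{\mathrm{HS}}^2=(2\pi)^{-N}m(T)m(\Omega)$ (correct with this paper's Fourier normalization, where Plancherel reads $\|\hat f\|_2^2=(2\pi)^N\|f\|_2^2$), and the domination of the operator norm by the Hilbert--Schmidt norm. Where you genuinely depart from the commonly reproduced short proof is the lower bound. That short proof estimates $\|f-P_TQ_\Omega f\|_2\le\|(I-P_T)f\|_2+\|P_T(I-Q_\Omega)f\|_2\le\varepsilon_T+\varepsilon_\Omega$, hence $\|P_TQ_\Omega\|_{\mathrm{op}}\ge 1-\varepsilon_T-\varepsilon_\Omega$, which after squaring only gives $m(T)m(\Omega)\ge(2\pi)^N(1-\varepsilon_T-\varepsilon_\Omega)^2$; the linear constant stated in the paper requires exactly the refinement you supply, namely the stronger bound $\|P_TQ_\Omega\|_{\mathrm{op}}^2\ge 1-\eta_T-\eta_\Omega$. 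Your derivation of it is sound: testing against the normalized images $P_Tf/\|P_Tf\|_2$ and $Q_\Omega f/\|Q_\Omega f\|_2$ only uses the easy inequality $|\langle u,w\rangle|=|\langle u,P_TQ_\Omega w\rangle|\le\|P_TQ_\Omega\|_{\mathrm{op}}$ for unit vectors in the two ranges; the identities $\langle f,(I-Q_\Omega)f\rangle=\eta_\Omega^2$ and $\langle(I-P_T)f,f\rangle=\eta_T^2$ (valid because $I-P_T$ and $I-Q_\Omega$ are orthogonal projections) are what make all error terms quadratic; and your scalar inequality checks out, since with $s=x+y$, $p=xy$ one has $1-x^2-y^2-xy=1-s^2+p$ and $(1-x^2)(1-y^2)=1-s^2+2p+p^2$, whence
\begin{equation*}
(1-s^2+p)^2-(1-s)(1-s^2+2p+p^2)=s\bigl[(1-s)^2(1+s)+2p(1-s)+p^2\bigr]\ge 0
\end{equation*}
for $0\le s\le1$, $p\ge0$, exactly as you claim. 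Two degenerate cases should be dispatched explicitly for completeness: if $P_Tf=0$ or $Q_\Omega f=0$ (i.e.\ $\eta_T=1$ or $\eta_\Omega=1$), your normalization is impossible, but then the hypotheses force $\varepsilon_T+\varepsilon_\Omega=1$ and the conclusion is the trivial $m(T)m(\Omega)\ge0$; and if $m(T)m(\Omega)=\infty$ there is nothing to prove (the Hilbert--Schmidt step tacitly assumes both measures finite). With those two remarks added, your argument is a complete and self-contained proof of the linear-constant version stated in the paper, strictly stronger than what the naive operator-norm estimate yields.
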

There is a corresponding version of the Donoho-Stark principle for the \textit{short-time Fourier transform}. Recall that the short-time Fourier transform is defined as
\begin{equation}\label{stftintro}
V_g f(x,\xi):=\int_{\mathbb{R}^N} f(y) \overline{g(y-x)} e^{-iy\xi}\,dy,
\end{equation}
for $f\in\mathcal{S}(\mathbb{R}^N)$, with standard extensions to more general spaces of functions and distributions. We refer to \cite[Chapter 3]{G} for a complete treatment of the basic results on this transformation. Here we just recall that from H\"older inequality, if $f,g\in L^2(\mathbb{R}^N)$ we immediately have that $V_g f\in L^\infty(\mathbb{R}^{2N})$ and
\begin{equation}\label{estVgf2}
	\| V_gf\|_\infty\leq \| f\|_2 \| g\|_2,
\end{equation}
where the norm in the left-hand side is in $\mathbb{R}^{2N}$ and the ones in the right-hand side are in $\mathbb{R}^N$. An uncertainty principle of Donoho-Stark type can be proved for the short-time Fourier transform, in the following form (\cite[Proposition 3.3.1]{G}).
\begin{thm}\label{DSVgfintro}
	Let $E\subset\mathbb{R}^{2N}$ be a measurable set and $\varepsilon>0$. Suppose that there exist $f,g\in L^2(\mathbb{R}^N)$, $f,g\neq 0$, such that
	\begin{equation}\label{thDSVg}
	\int_E |V_g f(x,\xi)|^2\,dx\,d\xi\geq (1-\varepsilon) \| f\|_2^2 \| g\|_2^2.
	\end{equation}
	Then $m(E)\geq 1-\varepsilon$.
\end{thm}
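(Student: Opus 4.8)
The plan is to exploit the inequality already recorded in the excerpt, namely the pointwise bound from \eqref{estVgf2}, together with the orthogonality (Moyal/Parseval) relation for the short-time Fourier transform, which gives $\|V_g f\|_{L^2(\mathbb{R}^{2N})}=\|f\|_2\,\|g\|_2$ for $f,g\in L^2(\mathbb{R}^N)$. The strategy is the standard Donoho--Stark trick: bound the mass of $|V_gf|^2$ that sits inside $E$ both from below, by the concentration hypothesis \eqref{thDSVg}, and from above, by combining the $L^\infty$ bound over $E$ with the measure of $E$. Comparing the two estimates will isolate $m(E)$.

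First I would start from the concentration assumption, writing
\begin{equation*}
(1-\varepsilon)\,\|f\|_2^2\,\|g\|_2^2\;\leq\;\int_E |V_g f(x,\xi)|^2\,dx\,d\xi.
\end{equation*}
Then I would estimate the right-hand side from above by pulling out the sup-norm of the integrand: since $|V_gf(x,\xi)|^2\leq \|V_gf\|_\infty^2$ pointwise, the integral over $E$ is at most $\|V_gf\|_\infty^2\,m(E)$. Next I would invoke \eqref{estVgf2}, which gives $\|V_gf\|_\infty^2\leq \|f\|_2^2\,\|g\|_2^2$, so that
\begin{equation*}
\int_E |V_g f(x,\xi)|^2\,dx\,d\xi\;\leq\;\|f\|_2^2\,\|g\|_2^2\,m(E).
\end{equation*}
Chaining the two displays and cancelling the factor $\|f\|_2^2\,\|g\|_2^2$, which is nonzero because $f,g\neq 0$, yields $1-\varepsilon\leq m(E)$, which is exactly the claim.

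I expect there to be essentially no hard obstacle here, since everything reduces to the two facts that $V_gf$ is bounded by $\|f\|_2\|g\|_2$ pointwise and that the concentration hypothesis already supplies the matching lower bound; the only point requiring a little care is the justification of the $L^\infty$ estimate on $E$, which is immediate, and the division by $\|f\|_2^2\|g\|_2^2$, which is legitimate precisely under the hypothesis $f,g\neq 0$. It is worth remarking that, unlike the classical Donoho--Stark principle for the Fourier transform, no lower bound on $m(E)$ of the form $(2\pi)^N(\cdots)$ appears: the normalization of the short-time Fourier transform used in \eqref{stftintro} absorbs such constants, so the clean conclusion $m(E)\geq 1-\varepsilon$ is what one obtains. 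If a sharper or two-sided statement were desired one would need the full orthogonality relation, but for this one-sided bound the pointwise estimate alone suffices.
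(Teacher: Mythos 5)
Your proof is correct and is essentially the same argument the paper relies on: the paper itself does not reprove this theorem (it cites \cite[Proposition 3.3.1]{G}), but the argument it uses for its own analogous results, e.g.\ Proposition \ref{NV-SpR}, is exactly your chain $\int_E |V_gf|^2 \leq m(E)\,\|V_gf\|_\infty^2 \leq m(E)\,\|f\|_2^2\|g\|_2^2$ combined with the concentration hypothesis and division by $\|f\|_2^2\|g\|_2^2\neq 0$. One harmless slip: with the normalization \eqref{stftintro} the orthogonality relation reads $\|V_gf\|_{L^2(\R^{2N})}=(2\pi)^{N/2}\|f\|_2\|g\|_2$, not $\|f\|_2\|g\|_2$ as you state in your plan, but since your argument never actually uses it, the proof stands as written.
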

An inspection of the proof of Theorem \ref{DSVgfintro} reveals that such a result is strictly related to \eqref{estVgf2}. Both \eqref{estVgf2} and \eqref{thDSVg} can be easily rewritten in terms of time-frequency representations. In Section \ref{section-definitions} we give more information, and we analyze the main representations used in the literature. Here we just recall that the most natural representation is the so-called \textit{Rihaczek} form, defined as
$$
Rf(x,\xi):=e^{-ix\xi}f(x)\overline{\hat{f}(\xi)};
$$
moreover, a widely used time-frequency representation, related to the short-time Fourier transform, is the \textit{spectrogram}, defined as
\begin{equation}\label{defSpectrogram}
Sp_g f(x,\xi):=| V_g f(x,\xi)|^2.
\end{equation}
Now, since $\| Sp_gf\|_\infty=\|V_gf\|_\infty^2$ and $\|Rf\|_2=\| f\|_2 \|\hat{f}\|_2=(2\pi)^{N/2}\| f\|_2^2$, we have that \eqref{estVgf2} can be rewritten as
\begin{equation}\label{estrepr}
\| Sp_g f\|_\infty\leq (2\pi)^{-N}\| Rf\|_2 \| Rg\|_2,
\end{equation}
where in this last estimate all the norms are in $\mathbb{R}^{2N}$. Furthermore, the Donoho-Stark type uncertainty principle of Theorem \ref{DSVgfintro} can be rephrased as follows.
\begin{thmprime}{\ref{DSVgfintro}$'$}
Let $E\subset\mathbb{R}^{2N}$ be a measurable set and $\varepsilon>0$. Suppose that there exist $f,g\in L^2(\mathbb{R}^N)$, $f,g\neq 0$, such that
\begin{equation}\label{hpDSmodif}
\int_E |Sp_g f(x,\xi)|\,dx\,d\xi\geq (1-\varepsilon) (2\pi)^{-N} \| Rf\|_2 \| Rg\|_2.
\end{equation}
Then $m(E)\geq 1-\varepsilon$.
\end{thmprime}
\noindent Taking a window $g$ such that $\| g\|_2=1$, this result, roughly speaking, says that if the ($L^1$) contents of the spectrogram of some signal $f$ in a set $E$ is larger than a fraction of the whole ($L^2$) contents of the Rihaczek of $f$, then the measure of $E$ must be sufficiently large. A natural question is if we can replace in \eqref{hpDSmodif} the spectrogram and the Rihaczek with other time-frequency representations. This, in turn, depends on which kind of estimates of the type \eqref{estrepr} we can prove on different couples of representations. \\[0.2cm]
\indent In this paper we then investigate estimates between different representations in the frame of (weighted) $L^p$-spaces, and deduce different kind of uncertainty principles. It is usually not difficult to estimate a representation by the Rihaczek, since (as in the case of the spectrogram) this can often be obtained as a consequence of the mapping properties in Lebesgue spaces of the representation itself. On the other hand, when in the right-hand side we want to put something different from the Rihaczek, the things become much more difficult, and some estimates are not true anymore for $L^p$ norms. For instance, we prove that for $p\geq 2$ there does not exist a constant $D>0$ such that
\begin{equation}\label{stimavietata}
	\|Rf\|_\infty\leq D\|\Wig (f)\|_p
\end{equation}
for every $f\in\mathcal{S}(\mathbb{R}^N)$, where
\begin{equation}\label{defWigner}
	\Wig (f)(x,\xi):=\int_{\R^N}e^{- i t\xi} f\left(x+\frac{t}{2}\right)\overline{f\left(x-\frac{t}{2}\right)}\, dt
\end{equation}
is the classical \textit{Wigner transform}. On the other hand, \eqref{stimavietata} becomes true if we consider weighted Lebesgue norms instead of unweighted ones; for instance, fixed $1\leq p\leq\infty$, there exist $C,\mu >0$ such that for every $f\in\mathcal{S}(\mathbb{R}^N)$ we have
$$
\|Rf\|_\infty\leq C\|(1+|x|^2+|\xi|^2)^\mu\, \Wig (f)\|_p.
$$
More generally, it turns out that ultradifferentiable weights allow us to obtain good estimates between Lebesgue norms of different representations, so a natural functional setting is given by ultradifferentiable spaces. We then consider weight functions in the sense of Braun-Meise-Taylor \cite{BMT} and corresponding (global) spaces of rapidly decreasing ultradifferentiable functions as defined in \cite{B}; weighted $L^p$-norms become, in fact, seminorms in the corresponding ultradifferentiable class (see \cite{BJOrpwt}), and we prove mutual estimates on weighted $L^p$-norms of different time-frequency representations.

Besides the interest that such estimates have in themselves, they have different applications. We prove indeed that they can be used to obtain uncertainty principles of Donoho-Stark type for different time-frequency representations, where the hypothesis \eqref{hpDSmodif} is substituted by the request that the contents of a time-frequency representation in some Lebesgue measurable set $E$ is greater than a fraction of the whole contents of another representation.

Moreover, mutual estimates between representations constitute the basic tool to prove uncertainty principles of local type, in the spirit of \cite{P}, \cite{BCO}, for different time-frequency distributions. We recall that, for the Fourier transform, the local uncertainty principle as formulated in \cite{P} is an inequality that, in its simplest version in dimension $1$, reads as
\begin{equation}\label{intro-loc-up}
\int_E |\hat{f}(\xi)|^2\,d\xi < K' m(E) \Delta(f)
\end{equation}
for every measurable set $E\subset\mathbb{R}$ and $f\in L^2(\mathbb{R})$ with $\| f\|_2=1$, where $K'$ does not depend on $E$ and $f$, $m(E)$ is the Lebesgue measure of $E$, and
$$
\Delta(f)=\| (t-\mu(f)) f(t)\|_2
$$
is the dispersion associated with $f$, for $\mu(f)=\int_{\mathbb{R}} t|f(t)|^2\,dt$. Then \eqref{intro-loc-up} says that $\hat{f}$ cannot show, locally in a measurable set $E$, a large amount of energy if $E$ has small measure and/or $f$ is very concentrated. Results of this type are proved in \cite{BCO} in the frame of time-frequency analysis. In this paper we prove uncertainty principles of local type for time-frequency representations, giving limitations of the amount of energy that a time-frequency representation of $f$ may have on a measurable set $E$ in terms of quantities measuring the size of $E$ and the dispersions of $f$ and/or $\hat{f}$; we also give analogous results where the role of $f$, $\hat{f}$, and the time-frequency representation of $f$ are interchanged. Moreover, the fact that all estimates are proved in weighted spaces implies that weights appear in the quantities measuring the size of $E$, too; in some cases this \emph{weighted measure} of $E$ may be finite even when the Lebesgue measure of $E$ is infinite, enlarging then the class of sets $E$ for which our results are meaningful.\\[0.2cm]
\indent The paper is organized as follows. In Section \ref{section-definitions} we recall definitions and basic properties of time-frequency representations and of ultradifferentiable spaces. In Section \ref{section-estimates} we prove different estimates between weighted norms of representations, and the corresponding uncertainty principles of Donoho-Stark type. Finally, Section \ref{Sec-local-up} is devoted to the study of local uncertainty principles.

\section{Definitions and preliminary results}\label{section-definitions}


\begin{defn}\label{D.weight} A weight function is a continuous increasing function $\omega:[0,\infty)\to[0,\infty)$  satisfying the following properties:
	\begin{itemize}
		\item[($\alpha$)] there exists $K\geq1$ such that $\omega(2t)\leq K(1+\omega(t))$ for every $t\geq0$;
		\item[($\beta$)] $\omega(t)=o(t)$ as $t \to \infty$;
		\item[($\gamma$)] there exist $a \in \mathbb{R}$, $b>0$ such that $\omega(t) \geq a+b\log(1+t)$ for every $t\geq0$;
		\item[($\delta$)] $\varphi_\omega(t):= \omega (e^t)$ is a convex function.
	\end{itemize}
\end{defn}
Given a weight function we can extend $\omega: \mathbb{R} \to [0,\infty)$ by defining $\omega(x)=\omega(|x|)$  for all $x \in \mathbb{R}$ (of course, in the same way we could extend $\omega $ to $\mathbb{R}^N$ for every $N$). The condition $(\beta)$ is weaker than the condition of non-quasianalyticity $\int_{1}^{\infty} \frac{\omega(t)}{1+t^2}\, dt < \infty$. When the latter condition is satisfied, the spaces that we are going to define shall contain non trivial compactly supported functions. Most of the results of this paper hold under condition $(\beta)$, i.e., both in the non-quasianalytic and in the quasianalytic case. When necessary, we will specify whether the weight has to be considered non-quasianalytic. 

Sometimes, we assume that the weight $\omega$ satisfies the additional condition $\log (1+t)=o(\omega(t))$ as $t\to\infty$, called condition $(\gamma')$, which is stronger than condition $(\gamma)$. 
\\[0.1cm]

	We recall some known properties of the weight functions that shall be useful in the following (the proofs can be found in the literature): 
	

	(1) Condition $(\alpha)$ implies for every $t_1, t_2 \geq 0$ that
	\begin{equation}\label{sub}
	\omega(t_1+t_2)\leq K(1+\omega(t_1)+\omega(t_2)).
	\end{equation}
	Observe that this condition is weaker than subadditivity (i.e.,  $\omega(t_1+t_2)\leq \omega(t_1) + \omega(t_2))$. The weight functions satisfying ($\alpha$) are not necessarily subadditive in general.
	
	(2) Condition $(\alpha)$ implies that there exists $L\geq 1$ such that for every $t\geq 0$
	\begin{equation*}
	\omega(e t)\leq L(1+\omega(t)).
	\end{equation*}
	
	(3) By condition $(\gamma)$ we have  for every  $\lambda>\frac{N}{bp}$ that
	\begin{equation}\label{eq.Lpspazi}
	e^{-\lambda \omega(t)}\in L^p(\R^N), \quad 1\leq p<\infty.
	\end{equation}

Given a weight function $\omega$, we define the Young conjugate $\varphi^*_\omega$ of $\varphi_\omega$ as the function $\varphi^*_\omega:[0,\infty)\to [0,\infty)$,
\begin{equation*}\label{Yconj}
\varphi^*_\omega(s):=\sup_{t\geq 0}\{ st-\varphi_\omega(t)\},\quad s\geq 0.
\end{equation*}
There is no loss of generality to assume that $\omega$ vanishes on $[0,1]$. We have that $\varphi^*_\omega$ is convex and increasing, $\varphi^*_\omega(0)=0$ and $(\varphi^*_\omega)^*=\varphi_\omega$. 

Now, we introduce the ultradifferentiable function space $\cS_\omega(\R^N)$ in the sense of Bj\"ork \cite{B}.
\begin{defn}\label{D.Beurling} Let $\omega$ be a weight function.
	We denote by  $\mathcal{S}_\omega(\mathbb{R}^N)$  the set of all functions $f\in L^1(\mathbb{R}^N)$ such that $f,\hat{f}\in C^\infty(\mathbb{R}^N)$ and for each $\lambda >0$ and $\alpha\in\N_0^N$ we have
	\begin{equation}\label{cond Sw 1}
	\| e^{\lambda\omega}\partial^\alpha f\|_\infty <\infty\ \ {\rm and }\ \  
	\|e^{\lambda\omega}\partial^\alpha\hat{f}\|_\infty <\infty \; ,
	\end{equation}
	where $\hat{f}$ denotes the Fourier transform of $f$. The elements of  $\mathcal{S}_\omega(\mathbb{R}^N)$ are called \textit{$\omega$-ultradifferentiable rapidly decreasing functions of Beurling type}. 
	We denote by  $\cS'_{\omega}(\R^N)$ the dual of $\cS_{\omega}(\R^N)$  endowed with its strong topology.
\end{defn}
Now, we recall some properties of  $\cS_\omega(\R^N)$.

\begin{rem}\label{in s} Let $\omega$ be a weight function. 
	
	(1)	The condition $(\gamma)$ of Definition \ref{D.weight} implies that  $\mathcal{S}_\omega(\mathbb{R}^N)\su \mathcal{S}(\mathbb{R}^N)$  with continuous inclusion. The spaces coincide when $\omega(t)=\log(1+t)$, for $t\geq 0$.
	Accordingly,  we can rewrite the definition of $\mathcal{S}_\omega(\mathbb{R}^N)$ as the set of all the Schwartz functions that satisfy the condition  $(\ref{cond Sw 1})$. 
	
	(2)	The space $\mathcal{S}_\omega(\mathbb{R}^N)$ is closed under convolution and under point-wise multiplication.
	
	(3) Let us denote by $T_x, M_\xi$ and $\Pi(z)$, respectively, the \textit{translation}, the \textit{modulation} and the \textit{phase-space shift} operators, defined by
	\begin{align*}
	&T_x f(y):= f(y-x),\\& M_\xi f(y):= e^{iy\xi}f(y), \\& \Pi(z)f(y):=M_\xi T_x f(y)= e^{iy \xi} f(y-x)
	\end{align*}
	for $x,y,\xi \in \R^N$ and $z=(x,\xi)$. The space $\mathcal{S}_\omega (\mathbb{R}^N)$ is closed under translation and modulation, and hence under the action of the phase-space shift operator.

	(4)	The Fourier transform $\mathcal{F}:\mathcal{S}_{\omega}(\mathbb{R}^N)\to \mathcal{S}_{\omega}(\mathbb{R}^N)$ is a continuous isomorphism.

\end{rem}

The space $\cS_\omega(\R^N)$ is a Fr\'echet space, and we can give different equivalent systems of seminorms. We refer to \cite{ AC2,BJOrpwt} for more results in this direction; here we just recall the following proposition.

\begin{prop}\label{P.norme}
	Let $\omega$ be a weight function and consider $f \in \mathcal{S}(\mathbb{R}^N)$. Fix $1\leq p\leq \infty$. Then $f\in\cS_\omega(\R^N)$ if, and only if, one of the following  conditions is satisfied.
	\begin{enumerate}	
		\item	\begin{enumerate}	
			\item[{\rm (i)} ] $\forall\lambda>0, \; \alpha\in\mathbb{N}^N_0$ $\exists C_{\alpha,\lambda,p}>0$ such that 
			$\|e^{\lambda\omega}\partial^\alpha f\|_p\leq C_{\alpha,\lambda,p}$, and
			
			\item[{\rm (ii)}] $\forall\lambda>0, \; \alpha\in\mathbb{N}^N_0$  $\exists C_{\alpha,\lambda,p}>0$ such that 	
			$\| e^{\lambda\omega}\partial^\alpha \hat{f}\|_p\leq C_{\alpha,\lambda,p} $.
		\end{enumerate}
		\item \begin{enumerate}
			\item[\rm (i)] $\forall \lambda>0$ $\exists C_{\lambda,p}>0$ such that
			$\| e^{\lambda\omega} f\|_p \leq C_{\lambda,p}$, and
			\item[\rm (ii)]	$\forall \lambda>0$ $\exists C_{\lambda,p}>0$ such that
			$\|e^{\lambda\omega}\hat{f}\|_p\leq C_{\lambda,p} $.
		\end{enumerate}
	\end{enumerate}
\end{prop}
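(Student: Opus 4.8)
The statement is an equivalence of three conditions, since the defining requirement \eqref{cond Sw 1} of $\mathcal{S}_\omega(\mathbb{R}^N)$ is precisely condition (1) for $p=\infty$. The plan is therefore to show first that condition (1) does not depend on $p$ — so that (1)$\iff f\in\mathcal{S}_\omega(\mathbb{R}^N)$ for every $p$ — and then that the derivative-free condition (2) is equivalent to (1). Throughout I work inside $\mathcal{S}(\mathbb{R}^N)$, so every function is smooth and rapidly decreasing and all manipulations below are classical; only the quantitative weighted bounds are at issue.

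For the $p$-independence of (1), the passage from $p=\infty$ to any finite $p$ is immediate: by \eqref{eq.Lpspazi} choose $\mu>N/(bp)$ with $e^{-\mu\omega}\in L^p(\mathbb{R}^N)$, and then $\|e^{\lambda\omega}\partial^\alpha f\|_p=\|e^{-\mu\omega}\,e^{(\lambda+\mu)\omega}\partial^\alpha f\|_p\le\|e^{-\mu\omega}\|_p\,\|e^{(\lambda+\mu)\omega}\partial^\alpha f\|_\infty$, and likewise for $\hat{f}$. For the converse I use a uniform Sobolev embedding: for $k>N/p$ one has $|g(x)|\le C\sum_{|\beta|\le k}\|\partial^\beta g\|_{L^p(B(x,1))}$ with $C$ independent of $x$. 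Applying this to $g=\partial^\alpha f$ and transferring the weight from $x$ to the integration variable by \eqref{sub} — for $|y-x|\le1$ one has $\omega(x)\le\omega(|y|+1)\le K(1+\omega(1)+\omega(|y|))$, whence $e^{\lambda\omega(x)}\le C_\lambda e^{\lambda K\omega(y)}$ — gives $e^{\lambda\omega(x)}|\partial^\alpha f(x)|\le C_\lambda'\sum_{|\beta|\le k}\|e^{\lambda K\omega}\partial^{\alpha+\beta}f\|_p$, finite by (1). The same applied to $\hat{f}$ yields the second half of \eqref{cond Sw 1}, so (1)$\iff f\in\mathcal{S}_\omega(\mathbb{R}^N)$.

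Since (2) is just (1) with $\alpha=0$, the implication (1)$\Rightarrow$(2) is trivial and the content lies in (2)$\Rightarrow$(1). Passing to $L^1$ by H\"older as above, condition (2)(ii) gives $e^{\lambda\omega}\hat{f}\in L^1$ for all $\lambda$; absorbing polynomials into the weight by condition $(\gamma)$ (for $\lambda$ large $|\xi|^{|\alpha|}e^{-\lambda\omega(\xi)}$ is bounded) we obtain $\xi^\alpha\hat{f}\in L^1$, hence each $\partial^\alpha f=c_N\mathcal{F}^{-1}[(i\xi)^\alpha\hat{f}]$ is merely \emph{bounded}, $\|\partial^\alpha f\|_\infty\le c_N\|\xi^\alpha\hat{f}\|_1<\infty$. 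On the other hand (2)(i) gives $e^{\lambda\omega}f\in L^1$; combining the $L^1$-decay of $f$ with the boundedness of $\nabla f$ just obtained — if $|f(x)|=a$ and $\|\nabla f\|_\infty\le L$ then $|f|\ge a/2$ on $B(x,a/(2L))$, forcing $\int_{B(x,1)}|f|\gtrsim a^{N+1}/L^N$ — and transferring weights by \eqref{sub}, upgrades this to \emph{pointwise} decay $|f(x)|\le C_\lambda e^{-\lambda\omega(x)}$ for every $\lambda$. At this stage $f$ decays at every rate $e^{-\lambda\omega}$ while all its derivatives are only known to be bounded.

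The main obstacle is exactly this last upgrade from ``derivatives bounded $+$ $f$ decaying'' to ``all derivatives decaying'': condition (2) carries no a priori weighted control of derivatives, and a naive Fourier argument is circular, since decay of $\partial^\alpha f$ in $x$ asks for smoothness of $\hat{f}$, i.e. weighted control of the derivatives of $f$ one is trying to produce. I would break the circularity by an interpolation (Landau--Kolmogorov) argument rather than by Fourier analysis, proceeding by induction on $|\alpha|$ with base case the decay of $f$ just proved. For $\alpha$ with $\alpha_i\ge1$ set $g=\partial^{\alpha-e_i}f$ and apply, along the $x_i$-axis, the one-dimensional inequality $|\partial_i g(x)|^2\le C\bigl(\sup_{B(x,1)}|g|\bigr)\bigl(\sup_{B(x,1)}|\partial_i^2 g|\bigr)+C\bigl(\sup_{B(x,1)}|g|\bigr)^2$; here $\partial_i g=\partial^\alpha f$, $\partial_i^2 g=\partial^{\alpha+e_i}f$ is bounded, and $\sup_{B(x,1)}|g|\le C_\lambda e^{-\lambda\omega(x)/K}$ by the inductive hypothesis together with \eqref{sub}, which forces $\|e^{\lambda\omega}\partial^\alpha f\|_\infty<\infty$ for all $\lambda,\alpha$. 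Running the whole argument with the roles of $f$ and $\hat{f}$ (equivalently of (2)(i) and (2)(ii)) interchanged yields the matching bounds for $\hat{f}$, giving exactly \eqref{cond Sw 1} and hence (1) for the prescribed $p$. The delicate points throughout are the uniformity in $x$ of the Sobolev and interpolation constants and the systematic conversion of $x$-weights into $y$-weights via \eqref{sub}; notably all polynomial losses are absorbed into an arbitrarily small increase of $\lambda$, so neither the Young conjugate $\varphi^*_\omega$ nor the convexity $(\delta)$ plays an essential role in this approach.
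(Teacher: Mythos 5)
Your argument is correct, but note that the paper itself offers no proof of Proposition \ref{P.norme}: it is recalled from the literature (see \cite{AC2} and \cite{BJOrpwt}), so the relevant comparison is with the proofs given there, and your route is genuinely different. The standard proofs in that literature are Fourier-analytic: they pass through the short-time Fourier transform (as in Proposition \ref{seminorma vgf} and Theorem \ref{equiv vgf}), using the inversion formula \eqref{inv} and estimates tied to the Young conjugate $\varphi^*_\omega$ to convert decay of $f$ and $\hat{f}$ into decay of $V_gf$ and back into weighted bounds on derivatives. You instead stay entirely in real-variable territory: the $p$-independence of condition (1) via H\"older in one direction and a uniform local Sobolev embedding on unit balls in the other; then, for the crucial implication (2)$\Rightarrow$(1), a measure-theoretic trick (weighted $L^1$ decay plus a bounded gradient forces pointwise decay) to settle the base case, followed by induction on $|\alpha|$ through the Landau--Kolmogorov inequality on unit segments, with all weight transfers handled by \eqref{sub} and all polynomial losses absorbed via $(\gamma)$ and \eqref{eq.Lpspazi}. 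Each step checks out: the Sobolev and Landau constants are translation-invariant, the boundedness of higher derivatives needed in the interpolation step is free since $f\in\mathcal{S}(\mathbb{R}^N)$ by hypothesis, and the symmetric run for $\hat{f}$ closes the loop to \eqref{cond Sw 1}. What your approach buys is elementarity and a sharper accounting of hypotheses --- it uses only monotonicity, $(\alpha)$ and $(\gamma)$, confirming your closing observation that neither $\varphi^*_\omega$ nor convexity $(\delta)$ is needed for this particular equivalence; what the STFT route buys is quantitative seminorm-to-seminorm estimates (hence continuity of the identity between the various seminorm systems, relevant to the Fr\'echet topology of $\cS_\omega(\R^N)$) and coherence with the time-frequency machinery on which the rest of the paper is built.
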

Now, we recall the definition of the spaces $\cO_{M,\omega}(\R^N)$ and $\cO_{C,\omega}(\R^N)$, that have been introduced in \cite{AC,AC2}.

\begin{defn}Let $\omega$ be a non-quasianalytic weight function.
	
	(a) 	We denote by $\mathcal{O}_{M,\omega}(\mathbb{R}^N)$  the set of all functions $f\in {C}^\infty(\mathbb{R}^N)$ such that for all $m\in \mathbb{N}$ there exist $C>0$ and $n \in\mathbb{N}$ such that for every $\alpha \in \mathbb{N}^N_0$ and $x\in \mathbb{R}^N$ we have
	\begin{equation*}\label{m}
	|\partial^\alpha f(x)|\leq Ce^ {n\omega(x)}e^{m\varphi^*_\omega\left(\frac{|\alpha|}{m}\right)}.		
	\end{equation*}
	The elements of $\mathcal{O}_{M,\omega}(\mathbb{R}^N)$ are called  \textit{ slowly increasing functions of Beurling type}.

	(b) We denote by  $\cO_{C,\omega}(\mathbb{R}^N)$  the set of all functions $f\in C^\infty(\R^N)$ for which there exists $n\in \mathbb{N}$ such that for all $m \in\mathbb{N}$ there exists $C>0$ so that for every $\alpha \in \mathbb{N}^N_0$ and $x\in \mathbb{R}^N$ we have
	\begin{equation*}\label{c}
	|\partial^\alpha f(x)|\leq Ce^{n\omega(x)}e^{m\varphi^*_\omega\left(\frac{|\alpha|}{m}\right)}.		
	\end{equation*}
	The elements of $\mathcal{O}_{C,\omega}(\mathbb{R}^N)$ are called  \textit{very slowly increasing functions of Beurling type}.
\end{defn}

The space $\mathcal{O}_{M,\omega}(\mathbb{R}^N)$ is the space of multipliers of $\cS_\omega(\R^N)$
 and of its dual space $\cS'_\omega(\R^N)$ 
as proved in \cite{AC}, i.e., $f\in \cO_{M,\omega}(\R^N)$, if and only if $fg\in \cS_\omega(\R^N)$ for all $g\in \cS_\omega(\R^N)$
 if and only if $fT\in \cS'_{\omega}(\R^N)$ for all $T\in \cS'_\omega(\R^N)$.

The space $\mathcal{O}'_{C,\omega}(\mathbb{R}^N)$ is the space of convolutors of $\cS_\omega(\R^N)$, as the following result shows.
\begin{prop}[{\cite[Theorem 5.3]{AC2}}]\label{T.conv}
	Let $\omega$ be a non-quasianalytic weight function and $T\in \cS'_\omega(\R^N)$. Consider the following properties:
	\begin{enumerate} 
		\item $T\in \cO'_{C,\omega}(\mathbb{R}^N)$.
		\item For every $f\in \mathcal{S}_\omega(\mathbb{R}^N)$, we have $T\star f \in \mathcal{S}_\omega(\mathbb{R}^N)$.  \end{enumerate}
	Then $(1)\Rightarrow (2)$. If, in addition, the weight function $\omega$ satisfies the stronger condition $(\gamma')$, then $(2)\Rightarrow (1)$.
\end{prop}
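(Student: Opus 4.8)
The plan is to transport the whole question to the Fourier side, where convolution becomes multiplication and the already-available multiplier characterization of $\cO_{M,\omega}(\R^N)$ carries the argument. For $T\in\cS'_\omega(\R^N)$ and $f\in\cS_\omega(\R^N)$ the convolution $T\star f$ is a well-defined smooth function, $(T\star f)(x)=\langle T,\tau_x\check f\rangle$ (recall $\cS_\omega(\R^N)$ is closed under translation and reflection), and it coincides with $\cF^{-1}(\hat T\,\hat f)$, the product $\hat T\,\hat f$ being meaningful because $\hat f\in\cS_\omega(\R^N)\subseteq\cO_{M,\omega}(\R^N)$ is a multiplier of $\cS'_\omega(\R^N)$. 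Since $\cF:\cS_\omega(\R^N)\to\cS_\omega(\R^N)$ is an isomorphism, as $f$ runs through $\cS_\omega(\R^N)$ so does $\hat f$; hence property $(2)$ is equivalent to requiring $\hat T\,g\in\cS_\omega(\R^N)$ for every $g\in\cS_\omega(\R^N)$, which by the multiplier theorem recalled above is precisely $\hat T\in\cO_{M,\omega}(\R^N)$.

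After this reduction the statement collapses to a Fourier characterization of the convolutor space, $\cF(\cO'_{C,\omega}(\R^N))=\cO_{M,\omega}(\R^N)$: the implication $(1)\Rightarrow(2)$ becomes the inclusion $\cF(\cO'_{C,\omega}(\R^N))\subseteq\cO_{M,\omega}(\R^N)$, and $(2)\Rightarrow(1)$ becomes the reverse inclusion. For the forward inclusion I would begin from a structure description of $T\in\cO'_{C,\omega}(\R^N)$ as a continuous functional on $\cO_{C,\omega}(\R^N)$, realizing it, through the defining estimates in which the growth order $n$ is fixed before all $m$, as a finite sum of derivatives of functions decaying like $e^{-n\omega}$; applying $\cF$ should then turn exactly this data into the slowly increasing bounds $|\partial^\alpha\hat T(x)|\leq C e^{n\omega(x)}e^{m\varphi^*_\omega(|\alpha|/m)}$ that define $\cO_{M,\omega}(\R^N)$, with the convexity of $\varphi^*_\omega$ and property $(\alpha)$ of $\omega$ governing the bookkeeping between differentiation and the $\varphi^*_\omega$ terms.

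The reverse inclusion $\cO_{M,\omega}(\R^N)\subseteq\cF(\cO'_{C,\omega}(\R^N))$ is the step I expect to be the main obstacle, and it is where condition $(\gamma')$ enters. Given $\hat T\in\cO_{M,\omega}(\R^N)$ one must manufacture, out of its pointwise multiplier estimates, a bounded linear functional on $\cO_{C,\omega}(\R^N)$ representing $T=\cF^{-1}\hat T$ --- that is, realize $T$ inside the dual structure, not merely as an element of $\cS'_\omega(\R^N)$. The strengthened condition $(\gamma')$, namely $\log(1+t)=o(\omega(t))$, provides the extra logarithmic room these estimates require: by $(\gamma)$ one already has $e^{-\lambda\omega}\in L^p(\R^N)$ for $\lambda$ large, and $(\gamma')$ upgrades the decay $e^{-n\omega}$ that absorbs the growth of $\hat T$ so that it still pairs integrably against every element of $\cO_{C,\omega}(\R^N)$. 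Carrying out this pairing and verifying its continuity with respect to the seminorms defining $\cO_{C,\omega}(\R^N)$ is the technical core; once it is in place, $T\in\cO'_{C,\omega}(\R^N)$ follows and the two implications are established.
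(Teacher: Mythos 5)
First, a point of comparison: the paper offers no proof of this proposition at all --- it is imported verbatim from \cite[Theorem 5.3]{AC2} --- so your attempt can only be measured against the results the paper quotes around it. Your first paragraph is correct, and it is in fact the natural way to derive the statement from those quoted results: writing $T\star f=\cF^{-1}(\hat{T}\hat{f})$ and using that $\cF$ is a bijection of $\cS_\omega(\R^N)$, property (2) is equivalent to $\hat{T}g\in\cS_\omega(\R^N)$ for all $g\in\cS_\omega(\R^N)$, hence (by the multiplier theorem of \cite{AC} recalled in Section \ref{section-definitions}) to $\hat{T}\in\cO_{M,\omega}(\R^N)$. The proposition thus reduces exactly to the inclusion $\cF(\cO'_{C,\omega}(\R^N))\subseteq\cO_{M,\omega}(\R^N)$ for non-quasianalytic weights, with equality under $(\gamma')$, which is \cite[Theorem 6.1]{AC2}, also quoted in Section \ref{section-definitions}. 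Had you closed the argument by citing these facts, it would be complete --- modulo two points used silently (that the exchange formula \eqref{eq.FT-C} holds for arbitrary $T\in\cS'_\omega(\R^N)$, not only for convolutors, which is what $(2)\Rightarrow(1)$ needs, and that the multiplier characterization applies to ultradistributions $\hat{T}$), and modulo the caveat that in \cite{AC2} the Fourier isomorphism theorem comes after, and plausibly depends on, Theorem 5.3, so this derivation inverts the logical order of the source.

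The genuine gaps appear exactly where you choose to prove the Fourier characterization rather than cite it. For the inclusion $\cF(\cO'_{C,\omega})\subseteq\cO_{M,\omega}$ you propose a structure theorem realizing $T\in\cO'_{C,\omega}(\R^N)$ as a \emph{finite} sum of derivatives of functions decaying like $e^{-n\omega}$. That is false in the ultradifferentiable setting: a finite sum of derivatives has finite distributional order, whereas $\cO'_{C,\omega}(\R^N)$ contains elements of infinite order --- for instance $G(D)\delta=\sum_\alpha a_\alpha D^\alpha\delta$ for an ultradifferential operator $G(D)$ of class $(\omega)$, which acts on $\cS_\omega(\R^N)$ by convolution. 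Any correct representation argument must handle such infinite-order operators, with coefficients controlled through $\varphi^*_\omega$; this is precisely where the $\omega$-theory departs from the Schwartz case, and your sketch does not address it. For the reverse inclusion under $(\gamma')$ --- which you rightly single out as the hard half --- no argument is actually given: you describe the object to be constructed (a continuous functional on $\cO_{C,\omega}(\R^N)$ representing $\cF^{-1}\hat{T}$) but exhibit no mechanism, and the role you assign to $(\gamma')$ is off the mark, since integrability of $e^{-\lambda\omega}$ for large $\lambda$ is already guaranteed by $(\gamma)$ alone (see \eqref{eq.Lpspazi}); the typical function of $(\gamma')$ is rather to absorb arbitrary polynomial factors under weights $e^{\varepsilon\omega}$ with $\varepsilon>0$ arbitrarily small. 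As written, the implication $(2)\Rightarrow(1)$ --- the only part of the proposition that requires $(\gamma')$ --- remains unproven.
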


Given a non-quasianalytic weight function $\omega$, the  Fourier transform $\mathcal{F}: \mathcal{O}'_{C,\omega}(\mathbb{R}^N)\to  \cO_{M,\omega}(\R^N)$ is well-defined. Moreover, for every $T\in \cO'_{C,\omega}(\R^N)$ and $f\in \cS_\omega(\R^N)$,  the convolution $T\star f$ satisfies the following property:
\begin{equation}\label{eq.FT-C}
\cF(T\star f)=\hat{f}\cF(T).
\end{equation}
If, in addition, the weight $\omega$ satisfies the stronger condition $(\gamma')$, then $\cF$ is a topological isomorphism from  $\cO'_{C,\omega}(\R^N)$  onto $\cO_{M,\omega}(\R^N)$, as shown in  \cite[Theorem 6.1]{AC2}. 

The short-time Fourier transform \eqref{stftintro} can be easily extended to ultradistributions; indeed, let us consider a window function $g\in \mathcal{S}_{\omega}(\mathbb{R}^N)$, $g\neq0$. The short-time Fourier transform (briefly STFT) of $f\in \mathcal{S}_\omega'(\mathbb{R}^N)$ is defined, for $z=(x,\xi)\in \R^{2N}$, by:
\begin{equation}\label{vgf}
V_g f(z):= \langle f, \Pi(z) g \rangle= \int_{\R^N} f(y) \overline{g(y-x)}e^{-i y \xi} \,dy,
\end{equation}
where the bracket $\langle \cdot,\cdot \rangle$ and the integral in \eqref{vgf} denote the conjugate linear action of $\mathcal{S}_\omega'(\mathbb{R}^N)$ on $\mathcal{S}_\omega(\mathbb{R}^N)$, consistent with the inner product $( \cdot,\cdot )_2$ on $L^2(\R^N)$. The definition is well posed, because if $g\in \mathcal{S}_\omega(\mathbb{R}^N)$, then $\Pi(z)g\in \mathcal{S}_\omega(\mathbb{R}^N)$ for each $z=(x,\xi) \in \mathbb{R}^{2N}$.  
For basic properties of the STFT in ultradifferentiable spaces see \cite{GZ}. We recall the \textit{inversion formula}:
\begin{align}\label{inv}
f(y)=\frac{1}{(2\pi)^N \|g\|_2^2}\int_{\R^{2N}} V_g f(z) (\Pi(z)g)(y)\, dz.
\end{align}
The STFT is also well-defined and continuous on $\cS_\omega(\R^N)$, as the following results contained in \cite{GZ} shows.
\begin{thm}\label{equiv vgf}
	Let $\omega$ be a weight function and consider  $g\in \mathcal{S}_{\omega}(\mathbb{R}^N)$, $g\neq0$. Then for $f\in \mathcal{S}_{\omega}'(\mathbb{R}^N)$ the following conditions are equivalent:
	\begin{itemize}
		\item[(1)] $f\in \mathcal{S}_{\omega}(\mathbb{R}^N)$.
		\item[(2)] For each $\lambda>0$ there exists a constant $C_\lambda>0$ such that
		\begin{equation*}\label{decay Vgf}
		|V_gf(z)|\leq C_\lambda e^{-\lambda\omega(z)}
		\end{equation*}
		for each $z\in \mathbb{R}^{2N}$.
		\item[(3)] $V_g f\in\cS_\omega(\mathbb{R}^{2N})$.
	\end{itemize}
\end{thm}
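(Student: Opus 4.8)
The plan is to prove the cyclic chain $(1)\Rightarrow(3)\Rightarrow(2)\Rightarrow(1)$, since $(3)$ is formally the strongest of the three conditions and $(3)\Rightarrow(2)$ costs nothing. Indeed, if $V_gf\in\cS_\omega(\R^{2N})$, then taking the $L^\infty$ part of the defining seminorms in \eqref{cond Sw 1} (equivalently, condition (2)(i) of Proposition \ref{P.norme} with $p=\infty$) gives $\|e^{\lambda\omega}V_gf\|_\infty<\infty$ for every $\lambda>0$, which is exactly the pointwise bound $|V_gf(z)|\le C_\lambda e^{-\lambda\omega(z)}$ of condition (2).

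For $(1)\Rightarrow(3)$ I would read $V_gf$ off \eqref{vgf} as a partial Fourier transform. Setting $F(x,y):=f(y)\overline{g(y-x)}$, we have $V_gf(x,\xi)=\int_{\R^N}F(x,y)e^{-iy\xi}\,dy$, i.e. $V_gf$ is the Fourier transform of $F$ in the second group of variables. Now $F$ is obtained from the tensor product $(y,u)\mapsto f(y)\overline{g(u)}$ — which lies in $\cS_\omega(\R^{2N})$ because both factors are in $\cS_\omega(\R^N)$ and $\cS_\omega$ is stable under conjugation and tensorisation — by the invertible linear substitution $(x,y)\mapsto(y,\,y-x)$. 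The class $\cS_\omega(\R^{2N})$ is invariant under such linear changes of variables (this is where property $(\alpha)$, in the form \eqref{sub}, enters, to control $\omega$ of the transformed argument), and it is invariant under the full, hence also the partial, Fourier transform by Remark \ref{in s}(4). Chaining these stability properties gives $V_gf\in\cS_\omega(\R^{2N})$, which is $(3)$.

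The substantial implication is $(2)\Rightarrow(1)$, and here I would use the inversion formula \eqref{inv}, reading its right-hand side as a vector-valued (Bochner) integral with values in the Fr\'echet space $\cS_\omega(\R^N)$ rather than a scalar one. The key point is a seminorm estimate for the map $z\mapsto\Pi(z)g$: differentiating $\Pi(z)g(y)=e^{iy\xi}g(y-x)$ and using $e^{\mu\omega(y)}\le e^{\mu K}e^{\mu K\omega(y-x)}e^{\mu K\omega(x)}$ from \eqref{sub}, the decaying factor $e^{\mu K\omega(y-x)}$ is absorbed by $g\in\cS_\omega$, the polynomial factors $|\xi|^{|\alpha|}$ coming from the modulation are dominated by $e^{\omega(\xi)}$ via condition $(\gamma)$, and $\omega(x),\omega(\xi)\le\omega(z)$ since $\omega$ is increasing. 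Hence every seminorm of $\Pi(z)g$ in $\cS_\omega(\R^N)$ is bounded by $Ce^{c\omega(z)}$ for constants $c,C$ depending only on the seminorm. Combined with the hypothesis $|V_gf(z)|\le C_\lambda e^{-\lambda\omega(z)}$ for $\lambda$ chosen larger than $c$, the integrand in \eqref{inv} is absolutely integrable in each seminorm; by completeness of $\cS_\omega(\R^N)$ the integral defines an element $\tilde f\in\cS_\omega(\R^N)$. Since \eqref{inv} holds as an identity in $\cS_\omega'(\R^N)$ and $\cS_\omega(\R^N)\hookrightarrow\cS_\omega'(\R^N)$, uniqueness forces $f=\tilde f\in\cS_\omega(\R^N)$, which is $(1)$.

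The main obstacle is precisely this last step: one must make the seminorm growth estimate for $\Pi(z)g$ explicit enough (tracking how the constant $c$ depends on $|\alpha|$ and on the weight parameter $\mu$) to guarantee that a single $\lambda$ in the hypothesis dominates all seminorms simultaneously, and one must justify passing from the scalar inversion formula to a convergent $\cS_\omega$-valued integral. Alternatively, one could avoid vector-valued integration altogether by estimating, via Proposition \ref{P.norme}(2), only the weighted $L^\infty$ norms of $f$ and of $\hat f$ directly from \eqref{inv} and its Fourier-transformed analogue (using $\hat g\in\cS_\omega$ by Remark \ref{in s}(4)); the analytic heart — dominating the growth of the window seminorms by the decay of $V_gf$ — is the same in either route.
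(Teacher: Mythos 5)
The paper does not actually prove this theorem: it is imported verbatim from the reference \cite{GZ} (Gr\"ochenig--Zimmermann), so there is no internal proof to compare against, and your proposal has to be judged on its own merits. On those merits it is essentially a correct reconstruction of the standard argument. The cyclic scheme $(1)\Rightarrow(3)\Rightarrow(2)\Rightarrow(1)$ is the right organization: $(3)\Rightarrow(2)$ is indeed immediate from Proposition \ref{P.norme}, and your $(2)\Rightarrow(1)$ --- reading \eqref{inv} as an $\cS_\omega(\R^N)$-valued integral, with the seminorm bound $\|e^{\mu\omega}\partial^\alpha \Pi(z)g\|_\infty\leq C e^{c(\mu,|\alpha|)\omega(z)}$, its analogue for $\widehat{\Pi(z)g}=e^{ix\xi}M_{-x}T_\xi\hat g$, and the decay $e^{-\lambda\omega(z)}$ available for \emph{every} $\lambda$ --- is exactly the analytic heart of the matter. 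Since $c(\mu,|\alpha|)$ is finite for each fixed seminorm (by $(\gamma)$, $|\xi|^{|\alpha|}\leq C e^{(|\alpha|/b)\,\omega(\xi)}$, so $c(\mu,|\alpha|)=K\mu+|\alpha|/b$ works), and \eqref{eq.Lpspazi} gives integrability as soon as $\lambda>c+2N/b$, the scheme closes.

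The one step you should not present as formal is the partial Fourier transform in $(1)\Rightarrow(3)$. Invariance of $\cS_\omega(\R^{2N})$ under the full Fourier transform does \emph{not} imply invariance under $\mathcal{F}_2$ by any composition identity: $\mathcal{F}_2=\mathcal{F}_1^{-1}\circ\mathcal{F}$ merely trades one partial transform for another. The fact is nevertheless true and provable with the paper's own tools: for $G\in\cS_\omega(\R^{2N})$ one has the two marginal bounds $|\mathcal{F}_2G(x,\xi)|\leq\int_{\R^N}|G(x,y)|\,dy\leq C_\mu e^{-\frac{\mu}{2}\omega(x)}$ and $|\mathcal{F}_2G(x,\xi)|=(2\pi)^{-N}\bigl|\int_{\R^N}\hat G(\eta,\xi)e^{ix\eta}\,d\eta\bigr|\leq C'_\mu e^{-\frac{\mu}{2}\omega(\xi)}$; taking the geometric mean and using $(\alpha)$ gives $|\mathcal{F}_2G(z)|\leq C''_\mu e^{-\frac{\mu}{4K}\omega(z)}$ for every $\mu$, the same argument applies to $\widehat{\mathcal{F}_2G}=(2\pi)^N\mathcal{F}_1(G\circ R_2)$, where $R_2$ is reflection in the second group of variables, and then Proposition \ref{P.norme}(2) concludes, since $\mathcal{F}_2G\in\mathcal{S}(\R^{2N})$ classically. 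A related caution for your alternative route in $(2)\Rightarrow(1)$: Proposition \ref{P.norme} characterizes $\cS_\omega$ only \emph{inside} $\mathcal{S}(\R^N)$, so before invoking it you must first know that the ultradistribution $f$ is a Schwartz function; this does follow from hypothesis (2), because by $(\gamma)$ the bound $e^{-\lambda\omega}$ gives polynomial decay of $V_gf$ of every order and the classical STFT characterization of $\mathcal{S}(\R^N)$ applies, but that step has to be said explicitly. With these two points patched, your proof is complete and coincides in substance with the argument in \cite{GZ}.
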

 Therefore, we can deduce the following result.
 \begin{prop}\label{cont Vg}
 	Let $\omega$ be a weight function and consider  $g\in \mathcal{S}_{\omega}(\mathbb{R}^N)$, $g\neq0$. Then
 	\begin{equation*}
 	V_g: \mathcal{S}_{\omega}(\mathbb{R}^N)\to \mathcal{S}_{\omega}(\mathbb{R}^{2N})
 	\end{equation*}
 	is continuous.
 \end{prop}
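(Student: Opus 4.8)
The plan is to invoke the closed graph theorem. Both $\mathcal{S}_\omega(\mathbb{R}^N)$ and $\mathcal{S}_\omega(\mathbb{R}^{2N})$ are Fr\'echet spaces, and $V_g$ is clearly linear; moreover, by the equivalence $(1)\Leftrightarrow(3)$ of Theorem \ref{equiv vgf}, $V_g$ indeed maps $\mathcal{S}_\omega(\mathbb{R}^N)$ into $\mathcal{S}_\omega(\mathbb{R}^{2N})$, so the operator is well defined. Since these spaces are metrizable, it then suffices to verify that the graph of $V_g$ is sequentially closed, after which continuity follows automatically. In other words, the genuinely analytic content---that the image of a function lands in the correct ultradifferentiable class---has already been absorbed into Theorem \ref{equiv vgf}, and what remains is only a soft functional-analytic step.

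To check closedness of the graph, I would take sequences with $f_n\to f$ in $\mathcal{S}_\omega(\mathbb{R}^N)$ and $V_g f_n\to F$ in $\mathcal{S}_\omega(\mathbb{R}^{2N})$, and show $F=V_gf$. Fix $z=(x,\xi)\in\mathbb{R}^{2N}$. On one hand, since $\Pi(z)g\in\mathcal{S}_\omega(\mathbb{R}^N)$ is a fixed test function and the inclusion $\mathcal{S}_\omega(\mathbb{R}^N)\hookrightarrow L^2(\mathbb{R}^N)$ is continuous (note $e^{\lambda\omega}\geq 1$ because $\omega\geq 0$, so $\|\cdot\|_2\leq\|e^{\lambda\omega}\cdot\|_2$, a defining seminorm by Proposition \ref{P.norme}), the bound $|V_g f_n(z)-V_g f(z)|=|\langle f_n-f,\Pi(z)g\rangle|\leq\|f_n-f\|_2\,\|g\|_2$ shows $V_g f_n(z)\to V_g f(z)$. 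On the other hand, convergence in $\mathcal{S}_\omega(\mathbb{R}^{2N})$ dominates the sup-norm (again $\|\cdot\|_\infty\leq\|e^{\lambda\omega}\cdot\|_\infty$), so $V_g f_n\to F$ uniformly, hence pointwise. Comparing the two limits yields $F(z)=V_gf(z)$ for every $z$, that is $F=V_gf$, which is precisely closedness of the graph.

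The only point requiring care is the interplay of the two topologies in the graph argument: one must ensure that convergence in each of the $\mathcal{S}_\omega$-topologies produces the pointwise evaluations used above. Both ingredients are elementary, namely continuity of the pairing against a fixed element of $\mathcal{S}_\omega(\mathbb{R}^N)$ in the domain, and domination of the sup-norm by a defining seminorm in the codomain, so no additional estimate is needed. An alternative, more computational route would be to bound each seminorm $\|e^{\lambda\omega}\partial^\beta V_g f\|_\infty$ directly by finitely many seminorms of $f$, exploiting the standard commutation relations between the STFT, differentiation, and multiplication (which convert $z$-derivatives of $V_g f$ into short-time Fourier transforms of $f$ taken against modified windows); but this would essentially reprove by hand what Theorem \ref{equiv vgf} already guarantees, so the closed graph argument is the more economical choice.
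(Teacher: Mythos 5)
Your proof is correct and follows essentially the same route as the paper: the paper presents Proposition \ref{cont Vg} as a direct consequence of Theorem \ref{equiv vgf} (which gives well-definedness of $V_g$ on $\mathcal{S}_\omega(\mathbb{R}^N)$), with continuity then following by the standard closed graph theorem for Fr\'echet spaces. Your verification of closedness of the graph (pointwise convergence via the continuous embedding into $L^2$ on the domain side, and domination of the sup-norm by the weighted seminorms on the codomain side) correctly fills in the soft functional-analytic step the paper leaves implicit.
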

The STFT also provides new equivalent systems of seminorms for $\mathcal{S}_{\omega}(\mathbb{R}^N)$ (see \cite{BJOrpwt}).
\begin{prop}\label{seminorma vgf}
	Let $\omega$ be a weight function and consider  $g\in \mathcal{S}_{\omega}(\mathbb{R}^N)$, $g\neq0$. Then $f\in\mathcal{S}_\omega(\mathbb{R}^N)$ if and only if one of the following conditions is satisfied:
	\begin{itemize}
		\item[(1)] For all $\lambda\geq 0$
	\begin{equation*}
	\|e^{\lambda\omega}V_g f\|_{\infty}= \sup_{z\in \R^{2N}} |V_g f(z)|e^{\lambda\omega(z)}<\infty.
	\end{equation*}
		\item[(2)] There exists $1\leq p<\infty$ such that for all $\lambda\geq 0$
	\begin{equation*}
	\|e^{\lambda\omega}V_g f\|^p_{p}= \int_{\R^{2N}} |V_g f(z)|^pe^{p\lambda\omega(z)}\,dz<\infty.
	\end{equation*}
	\end{itemize}
\end{prop}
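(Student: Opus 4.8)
The plan is to establish all the equivalences by closing the cycle
$f\in\cS_\omega(\R^N)\Rightarrow(1)\Rightarrow(2)\Rightarrow f\in\cS_\omega(\R^N)$, the point being that condition $(1)$, written as $\sup_{z}|V_gf(z)|e^{\lambda\omega(z)}<\infty$, is nothing but the pointwise decay estimate $|V_gf(z)|\le C_\lambda e^{-\lambda\omega(z)}$ appearing in Theorem \ref{equiv vgf}$(2)$. Thus the equivalence $f\in\cS_\omega(\R^N)\iff(1)$ is immediate from Theorem \ref{equiv vgf}, and only the passage to the $L^p$-condition requires work. The implication $(1)\Rightarrow(2)$ is easy: from $(1)$ we have $|V_gf(z)|\le C_\lambda e^{-\lambda\omega(z)}$ for every $\lambda$, and given a target weight $\mu$ we write $\lambda=\mu+\nu$ with $\nu$ chosen so large that $e^{-p\nu\omega}\in L^1(\R^{2N})$; such $\nu$ exists by \eqref{eq.Lpspazi} (in dimension $2N$). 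Then
\[
\int_{\R^{2N}}|V_gf(z)|^pe^{p\mu\omega(z)}\,dz\le C_{\mu+\nu}^p\int_{\R^{2N}}e^{-p\nu\omega(z)}\,dz<\infty,
\]
which yields $(2)$ for every $p$, in particular for some $p$.

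The heart of the matter is $(2)\Rightarrow(1)$. Here I would start from the inversion formula \eqref{inv}: inserting the reconstruction of $f$ into $V_gf(w)=\la f,\Pi(w)g\ra$ (see \eqref{vgf}) and interchanging the integrals gives
\[
V_gf(w)=\frac{1}{(2\pi)^N\|g\|_2^2}\int_{\R^{2N}}V_gf(z)\,\la \Pi(z)g,\Pi(w)g\ra\,dz .
\]
By the covariance of the STFT one checks $|\la \Pi(z)g,\Pi(w)g\ra|=|V_gg(w-z)|$ (the phase factor is irrelevant), which produces the \emph{pointwise convolution bound}
\[
|V_gf(w)|\le \frac{1}{(2\pi)^N\|g\|_2^2}\,\bigl(|V_gf|*|V_gg|\bigr)(w).
\]
Multiplying by $e^{\lambda\omega(w)}$ and using \eqref{sub} to split $e^{\lambda\omega(w)}\le e^{\lambda K}e^{\lambda K\omega(z)}e^{\lambda K\omega(w-z)}$, Hölder's inequality with exponents $p,p'$ gives, uniformly in $w$,
\[
e^{\lambda\omega(w)}|V_gf(w)|\le \frac{e^{\lambda K}}{(2\pi)^N\|g\|_2^2}\,\|e^{\lambda K\omega}V_gf\|_p\,\|e^{\lambda K\omega}V_gg\|_{p'} .
\]
The first norm is finite by hypothesis $(2)$ applied with the weight parameter $\lambda K$; the second is finite because $g\in\cS_\omega(\R^N)$ forces $V_gg\in\cS_\omega(\R^{2N})$ by Theorem \ref{equiv vgf}, so all its weighted $L^{p'}$-norms are finite by Proposition \ref{P.norme} in dimension $2N$. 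Taking the supremum over $w$ yields $(1)$, hence $f\in\cS_\omega(\R^N)$, and the cycle is closed.

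The main obstacle is this last implication, and within it the fact that the weight $\omega$ need not be subadditive. Replacing subadditivity by \eqref{sub} is harmless for the multiplicative constant $e^{\lambda K}$, but it crucially \emph{inflates} the weight parameter from $\lambda$ to $\lambda K$; the argument works only because both the hypothesis $(2)$ and the membership $V_gg\in\cS_\omega(\R^{2N})$ hold for \emph{every} weight parameter, so they survive this inflation. The other delicate point is the clean derivation of the convolution estimate from the inversion formula together with the covariance identity $|V_gg(w-z)|=|\la\Pi(z)g,\Pi(w)g\ra|$, which legitimizes discarding the phase and turning the reproducing identity into a genuine convolution inequality.
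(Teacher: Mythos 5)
The paper does not actually prove this proposition: it is recalled from the literature with a bare citation to \cite{BJOrpwt}, so there is no internal proof to compare against. Your argument is correct, and it is the standard argument for results of this type: the identification of condition (1) with the decay estimate in Theorem \ref{equiv vgf}(2); the implication (1)$\Rightarrow$(2) by splitting the weight parameter and invoking \eqref{eq.Lpspazi} in dimension $2N$; and, for the key implication (2)$\Rightarrow$(1), the reproducing identity obtained by testing the inversion formula \eqref{inv} against $\Pi(w)g$, the covariance identity $|\la\Pi(z)g,\Pi(w)g\ra|=|V_gg(w-z)|$, condition ($\alpha$) in the form \eqref{sub}, and H\"older's inequality, with finiteness of $\|e^{K\lambda\omega}V_gg\|_{p'}$ coming from $V_gg\in\cS_\omega(\R^{2N})$ (Theorem \ref{equiv vgf} applied with $f=g$) together with Proposition \ref{P.norme} in dimension $2N$. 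The one point you should make explicit is that $f$ is a priori only an element of $\cS'_\omega(\R^N)$, so the identity
\begin{equation*}
V_gf(w)=\frac{1}{(2\pi)^N\|g\|_2^2}\int_{\R^{2N}}V_gf(z)\,\la\Pi(z)g,\Pi(w)g\ra\,dz
\end{equation*}
must be read as the weak form of \eqref{inv}, i.e.\ the action of $f$ on the test function $\Pi(w)g\in\cS_\omega(\R^N)$; the resulting integral is absolutely convergent thanks to hypothesis (2) and the rapid decay of $V_gg$, so this is a routine justification rather than a gap. Your closing observation --- that condition ($\alpha$) only inflates the weight parameter from $\lambda$ to $K\lambda$, which is harmless precisely because both hypothesis (2) and the membership $V_gg\in\cS_\omega(\R^{2N})$ hold for \emph{every} weight parameter --- is indeed the exact reason the argument survives the lack of subadditivity of $\omega$, and it mirrors how condition ($\alpha$) is exploited throughout Section \ref{section-estimates} of the paper.
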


Now, we recall the definition of further time-frequency representations that we shall treat in the following, and that are widely used in the literature, see, for instance, \cite{C,G} and the references therein.
\begin{defn}\label{deftauW}
	For $\tau\in[0,1]$ and $f\in\cS(\R^N)$, we define the \textit{$\tau$-Wigner transform} as
	\begin{align*}
	\Wig_\tau (f)(x,\xi):=\int_{\R^N}e^{- i t\xi} f(x+\tau t)\overline{f(x-(1-\tau)t)}\, dt
	\end{align*}
	for $(x,\xi)\in\R^{2N}$. 
\end{defn}
If $\tau=\frac{1}{2}$, we get the classical Wigner transform \eqref{defWigner}.
In the cases $\tau=0$ and $\tau=1$, we get the Rihaczek and conjugate Rihaczek forms, given by
\begin{align*}
&Rf(x,\xi):=e^{- i x\xi} f(x)\overline{\hat{f}(\xi)},\\& R^*f(x,\xi):=e^{i x\xi}\overline{f(x)}\hat{f}(\xi),
\end{align*}
respectively, for $(x,\xi)\in\R^{2N}$. We have already defined the spectrogram \eqref{defSpectrogram}; it is known that 
the spectrogram and the $\tau$-Wigner transform are linked. In fact, given $f,g\in\cS_\omega(\R^N)$, $g\neq0$, we have
\begin{align}\label{spwig}
Sp_g f=\Wig_{1-\tau}(\tilde{g})\star \Wig_\tau (f),
\end{align}
where $\tilde{g}(t):=g(-t)$, $t\in\mathbb{R}^N$. For more details we refer the reader to \cite{BODD,BODD2,G}.

The $\tau$-Wigner transform and the spectrogram are particular cases of a large class of covariant quadratic representations, the so-called \textit{Cohen class}.
\begin{defn}\label{NV-DefQ}
	Let $\omega$ be a weight function.	Given a \emph{kernel} $\sigma\in \mathcal{S}'_\omega(\mathbb{R}^{2N})$, we define
	$$
	Q_\sigma f:=\sigma\star \Wig (f),
	$$
	for each $f\in \mathcal{S}_\omega(\mathbb{R}^N)$. We say that $Q_\sigma$ is a time-frequency representation in the Cohen class with kernel $\sigma$. 
\end{defn}
Of course, by \cite[Proposition 5.2]{AC2} every time-frequency representation in the Cohen class with kernel in $\mathcal{S}'_\omega(\mathbb{R}^{2N})$ is well-defined for $f\in\mathcal{S}_\omega(\mathbb{R}^{N})$. Moreover, if we consider the weight $\omega(t)=\log(1+t)$, we recover the classic time-frequency representation in the Cohen class with kernel in $\mathcal{S}'(\mathbb{R}^{2N})$.

When the kernel $\sigma$ is given by
$$
\sigma_{\BJ}(x,\xi):=\mathcal{F}_{\substack{t\to x \\ \eta\to\xi}}\left(\frac{2\sin(t\eta/2)}{t\eta}\right)
$$
the corresponding Cohen class form is the so-called \textit{Born-Jordan} representation
\begin{equation}\label{BJdef}
\BJ(f):=\sigma_{\BJ}\star \Wig(f),
\end{equation}
that has many interesting properties (see, for instance, \cite{CR,DG,T} and the references therein). The Born-Jordan representation is linked to the $\tau$-Wigner; it can, indeed, be written as
\begin{equation}\label{BJexpression}
\BJ(f)=\int_0^1 \Wig_\tau(f)\,d\tau,
\end{equation}
for every $f\in\mathcal{S}(\mathbb{R}^N)$, see, for instance, \cite{BODD}.

\section{Estimates on time-frequency representations and Donoho-Stark uncertainty inequalities}\label{section-estimates}
In this section we give some estimates between different time-frequency representations in the Cohen class (STFT, spectrogram, Rihaczek form, $\tau$-Wigner transform); moreover, we prove that, given two time-frequency representations $Q_{\sigma_1}$ and $Q_{\sigma_2}$ in the Cohen class, under suitable conditions on the kernels $\sigma_1$ and $\sigma_2$, it is possible to estimate one representation with the other. Such estimates are in the form of a control of the (weighted) $L^\infty$-norm of a representation by (weighted) $L^p$-norms of another representation; this gives as a consequence new inequalities of Donoho-Stark type.
\vskip0.1cm
Throughout this section $K$ and $b$ always indicate the constants of conditions $(\alpha)$ and $(\gamma)$ of Definition \ref{D.weight}, respectively. Moreover, we use as standard the notation $p'$ to indicate the conjugate exponent of $p\in[1,\infty]$, in the sense that $\frac{1}{p}+\frac{1}{p'}=1$; with this notation, $(2p)'$ is the number such that $\frac{1}{2p}+\frac{1}{(2p)'}=1$, and for $p=\infty$ we intend $(2p)'=1$. The norms in $L^p$ are indicated as $\|\cdot\|_p$. Sometimes they are norms in $L^p(\R^N)$ and sometimes in $L^p(\R^{2N})$, and it is clear from the context which is the case; when this is not clear, we specify by writing $\|\cdot\|_{L^p(\R^N)}$ or $\|\cdot\|_{L^p(\R^{2N})}$.

The next result gives a mutual control between the Spectrogram and the Rihaczek form.

\begin{thm}\label{SptstimaconR}
	Let $\omega$ be a weight function, $g\in \mathcal{S}_{\omega}(\mathbb{R}^N)$, $g\neq0$, and $1\leq p\leq \infty$.
	\begin{itemize}
	\item[(i)]
	For every $\lambda\geq0$ and $f\in\cS_\omega(\R^N)$ we have
	\begin{equation}\label{Blambda}
		\|e^{\lambda\omega}Sp_gf\|_\infty\leq D_\lambda^{(1)}\|e^{2K^2\lambda\omega}Rf\|_{p}\|e^{2K^2\lambda\omega}Rg\|_{p'},  
	\end{equation}
	where $D_\lambda^{(1)}=(2\pi)^{-N} e^{K(1+2K)\lambda}$.
	\item[(ii)]
	For every $\lambda\geq0$, $\mu>\frac{2N}{b(2p)'}$ and $f\in\cS_\omega(\R^N)$ we have
	\begin{equation}\label{Clambda}
	\|e^{\lambda\omega}Rf\|_{\infty}\leq D_{\lambda,\mu}^{(2)}\|e^{2(K^2\lambda+\mu)\omega}Sp_gf\|_{p},
	\end{equation}
	where
	$$
	D_{\lambda,\mu}^{(2)}=\frac{e^{K(1+2K)\lambda} \| e^{K^2\lambda\omega}g\|_\infty \| e^{K^2\lambda\omega}\hat{g}\|_\infty \|e^{-\mu\omega}\|^2_{L^{(2p)'}(\R^{2N})}}{(2\pi)^{2N}\| g\|_2^4}.
	$$
	\end{itemize}
\end{thm}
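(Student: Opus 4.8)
The two inequalities point in opposite directions, so I would prove them separately. Part~(i) is the elementary direction; part~(ii), where one must recover the phase-sensitive Rihaczek form from the modulus-square $Sp_g f=|V_g f|^2$, is where the real work lies.

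For (i), the plan is to exploit the convolution structure of the spectrogram. Specializing the spectrogram--$\tau$-Wigner relation \eqref{spwig} to $\tau=0$ expresses $Sp_g f$, up to the normalizing factor $(2\pi)^{-N}$ of the present Fourier convention (this is the $(2\pi)^{-N}$ in $D_\lambda^{(1)}$), as $\Wig_1(\tilde g)\star\Wig_0(f)=R^*(\tilde g)\star Rf$. Since $|R^*(\tilde g)(\zeta)|=|Rg(-\zeta)|$, I would bound pointwise
\[
|Sp_g f(z)|\le (2\pi)^{-N}\int_{\R^{2N}}|Rg(w-z)|\,|Rf(w)|\,dw,
\]
multiply by $e^{\lambda\omega(z)}$, and split the weight with the quasi-subadditivity estimate \eqref{sub}, writing $\omega(z)\le K(1+\omega(w)+\omega(w-z))$ so that $e^{\lambda\omega(z)}$ factors into a constant times $e^{K\lambda\omega(w)}e^{K\lambda\omega(w-z)}$ (the passage to the stated exponents $2K^2\lambda$ and constant $e^{K(1+2K)\lambda}$ is harmless since $K\ge1$ only enlarges the right-hand side). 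A single application of H\"older's inequality with the conjugate pair $(p,p')$ then separates the integral into the product $\|e^{K\lambda\omega}Rf\|_p\,\|e^{K\lambda\omega}Rg\|_{p'}$, and collecting constants yields \eqref{Blambda}.

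For (ii), the plan is to route everything through the STFT inversion formula. First I would reduce to one-variable estimates: since $|Rf(x,\xi)|=|f(x)|\,|\hat{f}(\xi)|$, the estimate \eqref{sub} gives $\|e^{\lambda\omega}Rf\|_\infty\le e^{K\lambda}\,\|e^{K\lambda\omega}f\|_\infty\,\|e^{K\lambda\omega}\hat{f}\|_\infty$. To bound $\|e^{K\lambda\omega}f\|_\infty$ I would apply the inversion formula \eqref{inv}, use $|\Pi(z)g(y)|=|g(y-x)|$, transfer the weight from $y$ to the integration variables by \eqref{sub} (and $\omega(x)\le\omega(x,\xi)$), and absorb the window into $\|e^{K^2\lambda\omega}g\|_\infty$; this leaves the scalar integral $\int e^{K^2\lambda\omega(z)}|V_g f(z)|\,dz$ with coefficient $e^{K^2\lambda}/((2\pi)^N\|g\|_2^2)$. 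To bound $\|e^{K\lambda\omega}\hat{f}\|_\infty$ I would run the same argument with window $\hat{g}$, invoking the fundamental identity of time--frequency analysis (see \cite{G}) in the form $|V_{\hat g}\hat{f}(\xi,-x)|=(2\pi)^N|V_g f(x,\xi)|$; a measure-preserving change of variables produces the \emph{same} scalar integral, and the $(2\pi)^N$ cancels the $(2\pi)^{-N}$ of \eqref{inv}, leaving $\|\hat g\|_2^2=(2\pi)^N\|g\|_2^2$ in the denominator. It then remains to turn $\int e^{K^2\lambda\omega}|V_g f|\,dz=\int e^{K^2\lambda\omega}(Sp_g f)^{1/2}\,dz$ into a weighted $L^p$-norm of $Sp_g f$: writing $e^{K^2\lambda\omega}=e^{-\mu\omega}\,e^{(K^2\lambda+\mu)\omega}$ and applying H\"older with the conjugate pair $((2p)',2p)$, the half-power of $Sp_g f$ recombines exactly into $\|e^{2(K^2\lambda+\mu)\omega}Sp_g f\|_p^{1/2}$, at the cost of the factor $\|e^{-\mu\omega}\|_{(2p)'}$, which by \eqref{eq.Lpspazi} on $\R^{2N}$ is finite precisely when $\mu>\frac{2N}{b(2p)'}$. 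Multiplying the $f$- and $\hat{f}$-estimates, the exponentials combine to $e^{K(1+2K)\lambda}$, the two scalar integrals give $\|e^{-\mu\omega}\|_{(2p)'}^2\,\|e^{2(K^2\lambda+\mu)\omega}Sp_g f\|_p$, and the denominators multiply to $(2\pi)^{2N}\|g\|_2^4$, reproducing $D_{\lambda,\mu}^{(2)}$ and hence \eqref{Clambda}.

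The main obstacle is entirely in (ii): the spectrogram retains only the modulus of the STFT, so the Rihaczek form, which couples $f$ and $\hat{f}$ with a phase, cannot be reached from $Sp_g f$ directly. The device that unlocks it is to reconstruct $f$ and $\hat{f}$ \emph{separately} through two inversion formulas, one in time and one in frequency, with the fundamental identity $V_g f\leftrightarrow V_{\hat g}\hat{f}$ as the bridge; the careful bookkeeping of the three independent sources of $2\pi$-powers (the fundamental identity, the inversion formula, and Plancherel) is what delivers the precise constant $(2\pi)^{2N}\|g\|_2^4$. The second delicate point is the half-power H\"older step, where the appearance of $(Sp_g f)^{1/2}$ forces the unusual conjugate pair $(2p,(2p)')$ and, with it, the integrability threshold $\mu>\frac{2N}{b(2p)'}$ on the weight.
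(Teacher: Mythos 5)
Your proposal is correct, but the two halves compare differently with the paper. For part (ii) you have essentially reconstructed the paper's own proof: the paper reduces to $e^{K\lambda}\|e^{K\lambda\omega}f\|_\infty\|e^{K\lambda\omega}\hat f\|_\infty$, bounds the first factor through the inversion formula \eqref{inv}, bounds the second by applying the Fourier transform to \eqref{inv} (which is exactly your ``inversion with window $\hat g$'' once \eqref{FundGabor} is used as the bridge), arrives at the intermediate estimate \eqref{NV-Sp4} with the constant $e^{K(1+2K)\lambda}\|e^{K^2\lambda\omega}g\|_\infty\|e^{K^2\lambda\omega}\hat g\|_\infty/((2\pi)^{2N}\|g\|_2^4)$, and then performs precisely your half-power H\"older step with the pair $(2p,(2p)')$, which is where $\mu>\frac{2N}{b(2p)'}$ enters via \eqref{eq.Lpspazi}. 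For part (i), however, the paper takes a different route: it never invokes the convolution identity \eqref{spwig}, but works directly at the STFT level, splitting $e^{\lambda\omega(x,\xi)}\le e^{K\lambda}e^{K\lambda\omega(x)}e^{K\lambda\omega(\xi)}$, estimating $e^{K\lambda\omega(x)}|V_gf|$ by H\"older on the defining integral \eqref{vgf}, estimating $e^{K\lambda\omega(\xi)}|V_gf|$ via the fundamental identity \eqref{FundGabor} (which is where its $(2\pi)^{-N}$ originates), and finally reassembling the four one-dimensional norms into Rihaczek norms through $\|e^{K^2\lambda\omega}f\|_p\,\|e^{K^2\lambda\omega}\hat f\|_p\le\|e^{2K^2\lambda\omega}Rf\|_p$. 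Your route via $Sp_gf=(2\pi)^{-N}R^*(\tilde g)\star Rf$ plus one weight-splitting and one H\"older application is shorter and even yields the slightly sharper exponent $K\lambda$ on the weights. One point deserves care: your $(2\pi)^{-N}$ rests on the normalization of \eqref{spwig}. As printed in the paper that identity carries no constant, and taken literally it would give you only $e^{K\lambda}$ in place of $(2\pi)^{-N}e^{K(1+2K)\lambda}$, which does not imply the stated $D^{(1)}_\lambda$ for small $\lambda$. With the paper's Fourier conventions, though, your normalization is the consistent one (integrating both sides gives $\int Sp_gf=(2\pi)^{N}\|f\|_2^2\|g\|_2^2$ while $\int\Wig_{1}(\tilde g)\star\Wig_{0}(f)=(2\pi)^{2N}\|f\|_2^2\|g\|_2^2$), so your constant does come out right; you were correct to flag and fix the convention explicitly.
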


\begin{proof}
(i) We first observe that, for $f\in\mathcal{S}_\omega(\R^N)$, by H\"older's inequality and \eqref{sub}, for every $\mu\geq 0$ we have
\begin{align}
	|e^{\mu\omega(x)}V_g f(x,\xi)|&\leq\int_{\mathbb{R}^N}e^{\mu\omega(x)}|f(t)||\overline{g(t-x)}| \,dt \notag\\
	&\leq \int_{\mathbb{R}^N} |f(t)||g(t-x)|e^{K\mu(1+\omega(t)+\omega(x-t))}\, dt \notag\\
	& \leq e^{K\mu }\|e^{K\mu \omega} f \|_p \|e^{K\mu\omega}g\|_{p'}. \label{NV-Sp1}
\end{align}
Recall the fundamental identity of the STFT
\begin{equation}\label{FundGabor}
V_gf(x,\xi)=(2\pi)^{-N} V_{\hat{g}}\hat{f}(\xi,-x);
\end{equation}
the same calculations as above then give
\begin{equation}\label{NV-Sp2}
| e^{\mu\omega(\xi)} V_gf(x,\xi)|\leq (2\pi)^{-N} e^{K\mu}\| e^{K\mu\omega}\hat{f}\|_p \| e^{K\mu\omega}\hat{g}\|_{p'}
\end{equation}
for every $\mu\geq 0$. By \eqref{NV-Sp1} and \eqref{NV-Sp2} we get
\begin{align}
e^{\lambda\omega(x,\xi)}|V_gf(x,\xi)|^2 &\leq e^{K\lambda} e^{K\lambda(\omega(x)+\omega(\xi))} |V_gf(x,\xi)|^2 \notag\\
&\leq \frac{e^{K\lambda } e^{2K^2\lambda}}{(2\pi)^N}\|e^{K^2\lambda \omega} f \|_p \|e^{K^2\lambda\omega}g\|_{p'} \| e^{K^2\lambda\omega}\hat{f}\|_p \| e^{K^2\lambda\omega}\hat{g}\|_{p'}. \label{NV-Sp3}
\end{align}
Observe that, by definition of Rihaczek form,
\begin{equation}\label{NV-Sp8}
\|e^{K^2\lambda \omega} f \|_p \| e^{K^2\lambda\omega}\hat{f}\|_p = \| e^{K^2\lambda(\omega(x)+\omega(\xi))}Rf(x,\xi)\|_p \leq \| e^{2K^2\lambda\omega}Rf\|_p,
\end{equation}
where the norms in the left-hand side are in $L^p(\R^N)$ while the ones in the right-hand side are in $L^p(\R^{2N})$; the same holds for $g$, and so by \eqref{NV-Sp3} the proof of point (i) is complete. \\[0.2cm]
\indent (ii) We start by proving that for every $\lambda\geq 0$
\begin{equation}\label{NV-Sp4}
\|e^{\lambda\omega}Rf\|_{\infty}\leq \frac{e^{K(1+2K)\lambda} \| e^{K^2\lambda\omega}g\|_\infty \| e^{K^2\lambda\omega}\hat{g}\|_\infty}{(2\pi)^{2N}\| g\|_2^4} \| e^{K^2\lambda\omega}V_gf\|_1^2.
\end{equation}
We have indeed that, from the inversion formula (\ref{inv}), it follows that for every $y\in\R^N$ and $\lambda\geq 0$,
\begin{align*}
	e^{\lambda\omega(y)}f(y)=\frac{1}{(2\pi)^N \|g\|_2^2}\int_{\R^{2N}} V_g f(x,\xi) e^{\lambda\omega(y)+iy\xi}g(y-x)\, dxd\xi.
\end{align*}
Applying \eqref{sub}, we get for every $y\in\R^N$ and $\lambda\geq0$
\begin{align}
	\| e^{\lambda\omega}f\|_\infty
	&\leq \sup_{y\in\R^N}\left[ \frac{1}{(2\pi)^N \|g\|_2^2}\int_{\R^{2N}} |V_g f(x,\xi)|e^{K\lambda(1+\omega(x)+\omega(y-x))}|g(y-x)|\, dxd\xi\right] \notag\\
	& \leq \frac{e^{K\lambda}\|e^{K\lambda\omega}g\|_\infty}{(2\pi)^N\|g\|_2^2}\int_{\R^{2N}} |V_g f(x,\xi)| e^{K\lambda\omega(x,\xi)}\, dxd\xi\notag \\
	&=\frac{e^{K\lambda}\|e^{K\lambda\omega}g\|_\infty\|e^{K\lambda\omega}V_gf\|_{1}}{(2\pi)^N\|g\|_2^2}. \label{NV-Sp5}
\end{align}
Now we observe that, applying the Fourier transform to both sides of \eqref{inv} we get
\begin{align*}
	\hat{f}(y)=\frac{1}{(2\pi)^N \|g\|_2^2}\int_{\R^{2N}} V_g f(x,\xi)e^{ix\xi}M_{-x}T_{\xi} \hat{g}(y)\, dxd\xi;
\end{align*}
hence proceeding as before we obtain
\begin{equation}\label{NV-Sp6}
	\|e^{\lambda\omega}\hat{f}\|_\infty\leq \frac{e^{K\lambda}\|e^{K\lambda\omega}\hat{g}\|_\infty\|e^{K\lambda\omega}V_gf\|_{1}}{(2\pi)^N\|g\|_2^2}.
\end{equation}
Since
\begin{equation*}
\| e^{\lambda\omega}Rf\|_\infty \leq \| e^{K\lambda(1+\omega(x)+\omega(\xi))}f(x)\hat{f}(\xi)\|_\infty = e^{K\lambda} \| e^{K\lambda\omega}f\|_\infty \| e^{K\lambda\omega}\hat{f}\|_\infty,
\end{equation*}
by \eqref{NV-Sp5} and \eqref{NV-Sp6} with $K\lambda$ instead of $\lambda$ we then obtain \eqref{NV-Sp4}. \\[0.1cm]
Fix now $p\in[1,\infty]$. For every $\lambda\geq 0$ and $\mu>\frac{2N}{b(2p)'}$ we get by H\"older inequality
\begin{align}
	\|e^{\lambda\omega}V_gf\|^2_{1}&=\left(\int_{\R^{2N}}e^{(\lambda+\mu)\omega(x,\xi)-\mu\omega(x,\xi)}|V_gf(x,\xi)|dxd\xi\right)^2\notag \\
	&\leq \|e^{-\mu\omega}\|_{L^{(2p)'}(\R^{2N})}^2\|e^{(\lambda+\mu)\omega}V_gf\|_{2p}^2 \notag \\
	&=\|e^{-\mu\omega}\|_{L^{(2p)'}(\R^{2N})}^2\|e^{2(\lambda+\mu)\omega}Sp_gf\|_{p}. \label{NV-Sp7}
\end{align}
The conclusion then follows from \eqref{NV-Sp4} and \eqref{NV-Sp7}.
\end{proof}

\begin{rem}\label{NV-Rem1}
The estimate \eqref{Blambda} extends \eqref{estrepr}, that is obtained by taking $\lambda=0$ and $p=2$ in \eqref{Blambda}. We observe moreover that the case $\lambda=0$ in Theorem \ref{SptstimaconR} gives estimates where `pure' (unweighted) Lebesgue norms appear everywhere except in the right-hand side of \eqref{Clambda}, where the weight $e^{2\mu\omega}$ with $\mu$ sufficiently large remains. In this sense there is a lack of symmetry in estimates \eqref{Blambda} and \eqref{Clambda}; we shall show in Remark \ref{NV-Rem2} below that this cannot be avoided, in the sense that \eqref{Clambda} with $\mu=0$ is not true for every $\lambda\geq 0$, $1\leq p\leq\infty$, and $f\in\mathcal{S}_\omega(\R^N)$ (even with a constant greater than $D_{\lambda,\mu}^{(2)}$).
\end{rem}

We can now prove an uncertainty principle of Donoho-Stark type related to Theorem \ref{SptstimaconR}.

\begin{prop}[Spectrogram vs. Rihaczek Donoho-Stark principle]\label{NV-SpR}
Let $\omega$ be a weight function, $E\subset\R^{2N}$ be a Lebesgue measurable set, $1\leq p\leq\infty$ and $\varepsilon\in(0,1)$. Suppose that one of the following conditions holds:
\begin{itemize}
\item[(a)]
There exist $f,g\in\mathcal{S}_\omega(\mathbb{R}^N)$, $f,g\neq 0$ and $\lambda\geq 0$ such that
$$
\int_E e^{\lambda\omega(x,\xi)} |Sp_gf(x,\xi)|\,dx d\xi\geq (1-\varepsilon) D_\lambda^{(1)}\|e^{2K^2\lambda\omega}Rf\|_{p}\|e^{2K^2\lambda\omega}Rg\|_{p'},  
$$
where $D_\lambda^{(1)}$ is the constant in \eqref{Blambda}.
\item[(b)]
There exist $f,g\in\mathcal{S}_\omega(\mathbb{R}^N)$, $f,g\neq 0$, $\lambda\geq 0$ and $\mu >\frac{2N}{b(2p)'}$ such that
$$
\int_E e^{\lambda\omega(x,\xi)} |Rf(x,\xi)| \geq (1-\varepsilon) D_{\lambda,\mu}^{(2)}\|e^{2(K^2\lambda+\mu)\omega}Sp_gf\|_{p},
$$
where $D_{\lambda,\mu}^{(2)}$ is the constant in \eqref{Clambda}.
\end{itemize}
Then $m(E)\geq 1-\varepsilon$, where $m(E)$ is the Lebesgue measure of $E$.
\end{prop}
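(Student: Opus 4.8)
The plan is to settle both cases (a) and (b) by the same two-step argument that underlies every Donoho--Stark type inequality: first bound the weighted mass of a representation over $E$ by $m(E)$ times its weighted sup-norm, then control that sup-norm by the companion representation via Theorem \ref{SptstimaconR}, and finally cancel the common factor furnished by the hypothesis.

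For case (a), since the spectrogram is nonnegative we have $|Sp_gf|=Sp_gf$, and the elementary pointwise bound $e^{\lambda\omega(x,\xi)}|Sp_gf(x,\xi)|\le\|e^{\lambda\omega}Sp_gf\|_\infty$ integrated over $E$ gives
$$
\int_E e^{\lambda\omega(x,\xi)}|Sp_gf(x,\xi)|\,dx\,d\xi\le m(E)\,\|e^{\lambda\omega}Sp_gf\|_\infty.
$$
I would then invoke \eqref{Blambda} of Theorem \ref{SptstimaconR}(i) to replace $\|e^{\lambda\omega}Sp_gf\|_\infty$ by $D_\lambda^{(1)}\|e^{2K^2\lambda\omega}Rf\|_p\|e^{2K^2\lambda\omega}Rg\|_{p'}$. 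Chaining this with the standing hypothesis in (a) yields
$$
(1-\varepsilon)D_\lambda^{(1)}\|e^{2K^2\lambda\omega}Rf\|_p\|e^{2K^2\lambda\omega}Rg\|_{p'}\le m(E)\,D_\lambda^{(1)}\|e^{2K^2\lambda\omega}Rf\|_p\|e^{2K^2\lambda\omega}Rg\|_{p'},
$$
and dividing through by the common positive factor gives $m(E)\ge 1-\varepsilon$.

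Case (b) is entirely analogous: using $|Rf(x,\xi)|=|f(x)||\hat{f}(\xi)|$ and the same trivial bound $\int_E e^{\lambda\omega}|Rf|\le m(E)\|e^{\lambda\omega}Rf\|_\infty$, I would apply \eqref{Clambda} of Theorem \ref{SptstimaconR}(ii) to estimate $\|e^{\lambda\omega}Rf\|_\infty\le D_{\lambda,\mu}^{(2)}\|e^{2(K^2\lambda+\mu)\omega}Sp_gf\|_p$, and then combine with the hypothesis of (b) and divide by $D_{\lambda,\mu}^{(2)}\|e^{2(K^2\lambda+\mu)\omega}Sp_gf\|_p$ to reach $m(E)\ge1-\varepsilon$ again.

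There is no genuine obstacle here; the single point requiring attention, which is precisely what makes the final division legitimate, is that the quantities one divides by are strictly positive and finite. Finiteness of $\|e^{2K^2\lambda\omega}Rf\|_p$, $\|e^{2K^2\lambda\omega}Rg\|_{p'}$ and $\|e^{2(K^2\lambda+\mu)\omega}Sp_gf\|_p$ follows from $f,g\in\mathcal{S}_\omega(\R^N)$ by Proposition \ref{P.norme} and Proposition \ref{seminorma vgf} (indeed \eqref{NV-Sp8} already exhibits the weighted Rihaczek norm as a product of finite weighted norms of $f$ and $\hat{f}$, while $\|e^{2(K^2\lambda+\mu)\omega}Sp_gf\|_p=\|e^{(K^2\lambda+\mu)\omega}V_gf\|_{2p}^2$ is finite by Proposition \ref{seminorma vgf}), and strict positivity is guaranteed by $f,g\neq0$, so that $Rf$ and $Sp_gf$ do not vanish identically. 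With these remarks in place the argument closes immediately.
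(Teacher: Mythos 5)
Your proposal is correct and follows essentially the same route as the paper: bound the weighted integral over $E$ by $m(E)$ times the weighted sup-norm, apply Theorem \ref{SptstimaconR}(i) (resp.\ (ii)), and divide by the common factor, whose positivity and finiteness you rightly justify from $f,g\in\mathcal{S}_\omega(\mathbb{R}^N)$, $f,g\neq 0$. The paper's proof is identical in substance, merely terser about the finiteness/positivity of the quantities being cancelled.
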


\begin{proof}
(a) From Theorem \ref{SptstimaconR}(i) we have
\begin{align*}
\int_E e^{\lambda\omega(x,\xi)} |Sp_gf(x,\xi)|\,dx\,d\xi&\leq m(E) \| e^{\lambda\omega} Sp_gf\|_\infty \\
&\leq m(E) D_\lambda^{(1)} \| e^{2K^2\lambda\omega} Rf\|_p \| e^{2K^2\lambda\omega} Rg\|_{p'}.
\end{align*}
Since $f,g\neq 0$ we have that both $Rf$, $Rg$, and $Sp_gf$ are different from $0$; then by hypothesis (a) we have $m(E)\geq 1-\varepsilon$. The proof of point (b) is analogous.
\end{proof}
The previous result says, roughly speaking, that in a set $E$ with small measure the spectrogram of a signal $f$ cannot show too large time-frequency contents when compared to the total time-frequency contents measured by the Rihaczek form; similarly, in a small set $E$ the Rihaczek form cannot show too large time-frequency contents when compared to the total time-frequency contents measured by the spectrogram. In the following we see that this is true when spectrogram and Rihaczek are substituted by couples of the most common time-frequency representations, as well as by couples of more general representations in the Cohen class. \\[0.2cm]

Now we consider the $\tau$-Wigner transform (cf. Definition \ref{deftauW}). Observe that, denoting by $\mathfrak{T}_z^{[\tau]}$ the operator acting on a function $F$ on $\R^{2N}$ as
$$
\mathfrak{T}_z^{[\tau]}F(x,t):=F(x+\tau t,x-(1-\tau)t),
$$
for $f\in\mathcal{S}(\mathbb{R}^N)$ and $\tau\in[0,1]$ we can write
\begin{align*}
	\Wig_\tau (f)(x,\xi)=\mathcal{F}_{t\to\xi}\left(\mathfrak{T}_z^{[\tau]} (f\otimes \overline{f})\right).
\end{align*}
Since both the (partial) Fourier transform and $\mathfrak{T}_z^{[\tau]}$ can be extended in a standard way to  (ultra)distributions, we can extend $\Wig_\tau$ to $\mathcal{S}'_\omega(\R^N)$, and from \cite[Theorem 3.4]{MO}, we get that
\begin{align*}
	\Wig_\tau: \mathcal{S}_\omega(\R^N)\to \mathcal{S}_\omega(\R^{2N})\\
	\Wig_\tau: \mathcal{S}'_\omega(\R^N)\to \mathcal{S}'_\omega(\R^{2N}).
\end{align*}
We introduce now the following notations for the dilation of a function. For $G=G(x,\xi)$, $x,\xi\in\R^N$, and $\nu_1,\nu_2\in\R\setminus\{ 0\}$ we write
$$
\left(D_{\nu_1}^{[1]} D_{\nu_2}^{[2]}G\right)(x,\xi):=G(\nu_1 x,\nu_2 \xi)
$$
for a dilation by $\nu_1$ in the first half of variables and a dilation by $\nu_2$ in the second half. Moreover, we write $D_\nu$ for a dilation by the same $\nu$ in all variables, in the sense that for a function $F=F(z)$, $z\in\R^M$, and $\nu\in\R\setminus\{ 0\}$ we write
$$
(D_\nu F)(z):=F(\nu z).
$$
In the discussion about $\tau$-Wigner transform below we need some particular dilations (for $\tau\neq 0,1$), that we write for convenience in the following way:
$$
(A_\tau h)(t):= (D_{\frac{\tau-1}{\tau}}h)(t)=h\left(\frac{\tau-1}{\tau} t\right),
$$
for a function $h$ on $\R^N$, and
$$
V_g^\tau f(x,\xi):=\left( D_{\frac{1}{1-\tau}}^{[1]} D_{\frac{1}{\tau}}^{[2]} V_gf\right)(x,\xi)=V_gf\left( \frac{x}{1-\tau},\frac{\xi}{\tau}\right)
$$
for the STFT $V_gf$, whenever it is defined. With these notations we have that, for $\tau\in(0,1)$, the following identity holds (see \cite[Lemma 6.2]{BODD}):
\begin{equation}\label{wigvgf}
	\Wig_\tau (f)(x,\xi)=\frac{1}{\tau^N}e^{\frac{ix\xi}{\tau}} V_{A_\tau f}^\tau f(x,\xi).
\end{equation}
We can now prove the following result, giving a mutual control between the $\tau$-Wigner transform and the Rihaczek form.

\begin{thm}\label{tauWR}
Let $\omega$ be a weight function, $1\leq p\leq \infty$, and $\tau\in (0,1)$.
\begin{itemize}
\item[(i)]
For every $\lambda\geq 0$ and $f\in\mathcal{S}_\omega (\R^N)$ we have
\begin{equation}\label{wigconR}
\|e^{\lambda\omega}\Wig_\tau (f)\|_\infty^2\leq D_\lambda^{(3)}\|e^{4K^2\lambda\omega}Rf\|_p\|e^{4K^2\lambda\omega}Rf\|_{p'},
\end{equation}
where
$$
D_\lambda^{(3)}=\frac{e^{K(1+2K)\lambda}}{(2\pi)^N (\tau-\tau^2)^N}.
$$
\item[(ii)]
For every $\lambda\geq 0$, $\mu>\frac{2N}{b(2p)'}$ and $f\in\mathcal{S}_\omega(\R^N)$ we have
\begin{equation}\label{Rconwig}
\|e^{\lambda\omega}R f\|_\infty\leq D_{\lambda,\mu}^{(4)}\|e^{2K(K^2\lambda+\mu)\omega}\Wig_\tau (f)\|_p, 
\end{equation}
where
$$
D_{\lambda,\mu}^{(4)}=\inf_{\substack{g\in\mathcal{S}_\omega(\R^N) \\ g\neq 0}} \left[ D_{\lambda,\mu}^{(2)} e^{2K(K^2 \lambda+\mu)} \| e^{2K(K^2\lambda+\mu)\omega} \Wig_{1-\tau}(\tilde{g})\|_1\right];
$$
here $D_{\lambda,\mu}^{(2)}$ is the constant in \eqref{Clambda} and $\tilde{g}(t):=g(-t)$, $t\in\R^N$.
\end{itemize}
\end{thm}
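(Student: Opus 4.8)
The plan is to treat the two parts by different mechanisms: part (i) by a direct computation on the defining integral of $\Wig_\tau$, exploiting a convex--combination structure in both the time and the frequency variables, and part (ii) by reducing to the spectrogram estimate of Theorem \ref{SptstimaconR}(ii) through the convolution identity \eqref{spwig} together with a weighted Young inequality.

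For part (i) I would first record two ``one--sided'' weighted bounds. Observe that $x=(1-\tau)(x+\tau t)+\tau(x-(1-\tau)t)$, so $|x|\le|x+\tau t|+|x-(1-\tau)t|$ and hence, by \eqref{sub}, $e^{\mu\omega(x)}\le e^{K\mu}e^{K\mu\omega(x+\tau t)}e^{K\mu\omega(x-(1-\tau)t)}$ for every $\mu\ge0$. Inserting this into the definition of $\Wig_\tau(f)$, applying H\"older's inequality with exponents $p,p'$ in the variable $t$, and changing variables $a=x+\tau t$ (Jacobian $\tau^{-N}$) and $b=x-(1-\tau)t$ (Jacobian $(1-\tau)^{-N}$), I obtain
\[
e^{\mu\omega(x)}|\Wig_\tau(f)(x,\xi)|\le e^{K\mu}\tau^{-N/p}(1-\tau)^{-N/p'}\|e^{K\mu\omega}f\|_p\|e^{K\mu\omega}f\|_{p'}.
\]
Writing $\Wig_\tau(f)$ in its frequency form (equivalently, using the Fourier covariance $\Wig_\tau(f)(x,\xi)=(2\pi)^{-N}\Wig_{1-\tau}(\hat f)(\xi,-x)$), the variable $\xi$ is the analogous convex combination $\xi=\tau(\xi+(1-\tau)s)+(1-\tau)(\xi-\tau s)$ of the shifted frequencies, so the identical argument gives
\[
e^{\mu\omega(\xi)}|\Wig_\tau(f)(x,\xi)|\le(2\pi)^{-N}e^{K\mu}(1-\tau)^{-N/p}\tau^{-N/p'}\|e^{K\mu\omega}\hat f\|_p\|e^{K\mu\omega}\hat f\|_{p'}.
\]

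Multiplying the two displays and regrouping the four factors as $(\|e^{K\mu\omega}f\|_p\|e^{K\mu\omega}\hat f\|_p)(\|e^{K\mu\omega}f\|_{p'}\|e^{K\mu\omega}\hat f\|_{p'})$, I would use $|Rf(x,\xi)|=|f(x)||\hat f(\xi)|$ together with $\omega(x)+\omega(\xi)\le2\omega(x,\xi)$ to bound each bracket by $\|e^{2K\mu\omega}Rf\|_p$ and $\|e^{2K\mu\omega}Rf\|_{p'}$ respectively. Finally, bounding $e^{\lambda\omega(x,\xi)}\le e^{K\lambda}e^{K\lambda(\omega(x)+\omega(\xi))}$ by \eqref{sub} and choosing $\mu=K\lambda$, I take the supremum over $(x,\xi)$: the Jacobian factors collapse to $(\tau(1-\tau))^{-N}=(\tau-\tau^2)^{-N}$ and the exponential prefactors to $e^{K(1+2K)\lambda}$, which is exactly $D^{(3)}_\lambda$. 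This computation produces the weight $e^{2K^2\lambda\omega}$ on the right, and since $e^{2K^2\lambda\omega}\le e^{4K^2\lambda\omega}$ the stated estimate \eqref{wigconR} follows a fortiori.

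For part (ii) I would apply Theorem \ref{SptstimaconR}(ii) with an \emph{arbitrary} window $g\in\mathcal{S}_\omega(\R^N)$, $g\neq0$, obtaining $\|e^{\lambda\omega}Rf\|_\infty\le D^{(2)}_{\lambda,\mu}\|e^{2(K^2\lambda+\mu)\omega}Sp_gf\|_p$, and then estimate the spectrogram through the identity \eqref{spwig}, $Sp_gf=\Wig_{1-\tau}(\tilde g)\star\Wig_\tau(f)$. The tool needed is a weighted Young inequality: from $\omega(z)\le K(1+\omega(z-w)+\omega(w))$ (i.e.\ \eqref{sub}) one gets, for every $\nu\ge0$,
\[
\|e^{\nu\omega}(\phi\star\psi)\|_p\le e^{K\nu}\|e^{K\nu\omega}\phi\|_1\|e^{K\nu\omega}\psi\|_p .
\]
Applying this with $\nu=2(K^2\lambda+\mu)$, $\phi=\Wig_{1-\tau}(\tilde g)$ and $\psi=\Wig_\tau(f)$ produces precisely the factor $e^{2K(K^2\lambda+\mu)}\|e^{2K(K^2\lambda+\mu)\omega}\Wig_{1-\tau}(\tilde g)\|_1$ multiplying $\|e^{2K(K^2\lambda+\mu)\omega}\Wig_\tau(f)\|_p$; since the resulting bound holds for every admissible $g$, taking the infimum over $g\neq0$ yields the constant $D^{(4)}_{\lambda,\mu}$ and hence \eqref{Rconwig}. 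The routine points are the changes of variables and the bookkeeping of the $\tau$--dependent Jacobians in part (i), and the verification that $\Wig_{1-\tau}(\tilde g)\in\mathcal{S}_\omega(\R^{2N})$ (so that its weighted $L^1$--norm is finite and $D^{(4)}_{\lambda,\mu}<\infty$), which follows from $\tilde g\in\mathcal{S}_\omega(\R^N)$ and the mapping properties of $\Wig_{1-\tau}$ recalled above. The genuinely delicate step is the frequency--side one--sided estimate in part (i): one must justify the Fourier covariance of the $\tau$--Wigner transform (equivalently, rewrite $\Wig_\tau(f)$ as a partial Fourier transform on the frequency side and detect the dual convex combination) so that the time and frequency estimates are perfectly symmetric and the constants combine to the stated $D^{(3)}_\lambda$.
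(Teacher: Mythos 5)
Your proof is correct, but part (i) takes a genuinely different route from the paper, so it is worth comparing. For part (ii) you do essentially what the paper does: apply Theorem \ref{SptstimaconR}(ii) with an arbitrary window, estimate $Sp_gf=\Wig_{1-\tau}(\tilde g)\star\Wig_\tau(f)$ via the weighted Young inequality that \eqref{sub} yields, and take the infimum over $g$; your explicit remark that $\Wig_{1-\tau}(\tilde g)\in\mathcal{S}_\omega(\R^{2N})$ (so that $D^{(4)}_{\lambda,\mu}<\infty$) is a point the paper leaves implicit. For part (i), by contrast, the paper goes through the identity \eqref{wigvgf}, writing $\Wig_\tau(f)$ as a dilated STFT with the dilated window $A_\tau f$, then reuses the STFT estimate \eqref{NV-Sp3} and unwinds the dilations (the computations \eqref{NV-Wtau2}--\eqref{NV-Wtau3}), finally discarding the dilated weights by monotonicity of $\omega$; this double pass through dilations is what produces the exponent $4K^2\lambda$ there. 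You instead work directly on the defining integral, exploiting the convex combination $x=(1-\tau)(x+\tau t)+\tau\bigl(x-(1-\tau)t\bigr)$ together with \eqref{sub} and H\"older in $t$, and on the frequency side the covariance identity $\Wig_\tau(f)(x,\xi)=(2\pi)^{-N}\Wig_{1-\tau}(\hat f)(\xi,-x)$. That identity is true (and you correctly flag it as the one step needing justification: writing $\Wig_\tau(f)$ as a partial Fourier transform and computing, one finds $\Wig_\tau(f)(x,\xi)=(2\pi)^{-N}\int e^{ixs}\hat f\bigl(\xi+(1-\tau)s\bigr)\overline{\hat f(\xi-\tau s)}\,ds$, which is exactly $(2\pi)^{-N}\Wig_{1-\tau}(\hat f)(\xi,-x)$; note the swap $\tau\leftrightarrow 1-\tau$, which your Jacobian bookkeeping correctly reflects). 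Your route avoids the dilation machinery of \cite{BODD} entirely, the Jacobian factors combine to the same $(\tau-\tau^2)^{-N}$ and the exponentials to the same $e^{K(1+2K)\lambda}$, and you actually obtain the sharper weight $e^{2K^2\lambda\omega}$ on the Rihaczek norms, from which \eqref{wigconR} with $e^{4K^2\lambda\omega}$ follows a fortiori. The trade-off is that the paper's argument recycles machinery already built for Theorem \ref{SptstimaconR} and the cited identity \eqref{wigvgf}, whereas yours is self-contained but requires proving the $\tau$-Wigner Fourier covariance, which the paper never states for general $\tau$.
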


\begin{proof}
(i) We first observe by \eqref{wigvgf} that
\begin{align*}
\|e^{\lambda\omega}\Wig_\tau (f)\|_\infty^2 =\frac{1}{\tau^{2N}}\|e^{\lambda\omega}V_{A_\tau f}^\tau f\|_\infty^2=\frac{1}{\tau^{2N}}\|e^{\lambda D^{[1]}_{(1-\tau)}D^{[2]}_\tau \omega}V_{A_\tau f} f\|_\infty^2.
\end{align*}
Then, proceeding as in \eqref{NV-Sp3} we get
\begin{equation}\label{NV-Wtau1}
\begin{split}
\|e^{\lambda\omega}\Wig_\tau (f)\|_\infty^2&\leq \frac{e^{K(1+2K)\lambda}}{(2\pi)^N \tau^{2N}}\|e^{2K^2\lambda D_{(1-\tau)}\omega}f\|_p\|e^{2K^2\lambda D_{(1-\tau)}\omega}A_\tau f\|_{p'}\times \\ &\quad\times\|e^{2K^2\lambda D_\tau \omega}\hat{f}\|_p\|e^{2K^2\lambda D_{\tau} \omega}\widehat{A_\tau f}\|_{p'}.
\end{split}
\end{equation}
Suppose first that $p>1$. Since
\begin{align*}
\widehat{A_\tau f}= \frac{|\tau|^N}{|\tau-1|^N} D_{\frac{\tau}{\tau-1}} \hat{f},
\end{align*}
we have 
\begin{align}
	\|e^{2K^2\lambda D_{\tau} \omega}\widehat{A_\tau f}\|_{p'}^{p'}&=\frac{|\tau|^{p'N}}{|\tau-1|^{p'N}}\|e^{2K^2\lambda D_{\tau} \omega}D_{\frac{\tau}{\tau-1}} \hat{f}\|_{p'}^{p'} \notag\\
	&=\frac{|\tau|^{(p'-1)N}}{|\tau-1|^{(p'-1)N}} \int_{\R^N}e^{2p'K^2\lambda\omega ((\tau -1) y)}|\hat{f}\left(y\right)|^{p'} dy \notag\\
	&=\frac{|\tau|^{(p'-1)N}}{|\tau-1|^{(p'-1)N}}\|e^{2K^2\lambda D_{\tau-1} \omega}\hat{f}\|_{p'}^{p'}. \label{NV-Wtau2}
\end{align}
Analogously, we get
\begin{equation}\label{NV-Wtau3}
	\|e^{2K^2\lambda D_{(1-\tau)}\omega}A_\tau f\|_{p'}^{p'}= \frac{|\tau|^{N}}{|\tau-1|^{N}}\|e^{2K^2\lambda D_{-\tau}\omega}f\|_{p'}^{p'}.
\end{equation}
From \eqref{NV-Wtau1}, \eqref{NV-Wtau2} and \eqref{NV-Wtau3} we obtain
\begin{equation}\label{NV-Wtau4}
	\begin{split}
	\|e^{\lambda\omega}\Wig_\tau (f)\|_\infty^2&\leq \frac{e^{K(1+2K)\lambda}}{(2\pi)^N(\tau-\tau^2)^N}\|e^{2K^2\lambda D_{(1-\tau)}\omega}f\|_p\|e^{2K^2\lambda D_{-\tau}\omega}f\|_{p'}\times\\
	&\quad \times\|e^{2K^2\lambda D_\tau \omega}\hat{f}\|_p\|e^{2K^2\lambda D_{\tau-1}  \omega}\hat{f}\|_{p'}.
	\end{split}
\end{equation}
If $p=1$, similarly we get \eqref{NV-Wtau4}, since
\begin{align*}
	&\|e^{2K^2\lambda D_\tau \omega}\widehat{A_\tau f}\|_{\infty} = \frac{|\tau|^{N}}{|\tau-1|^{N}}\|e^{2K^2\lambda D_{\tau-1} \omega} \hat{f}\|_{\infty},
	\\&\|e^{2K^2\lambda D_{(1-\tau)}\omega}A_\tau f\|_{\infty} = \|e^{2K^2\lambda D_{-\tau}\omega}f\|_{\infty}.
\end{align*}
Observe now that if $\tau\in(0,1)$, then $D_{(1-\tau)}\omega\leq \omega$ and $D_\tau \omega\leq\omega$, since $\omega$ is increasing; moreover, $D_\tau\omega=D_{-\tau}\omega$ and $D_{\tau-1}\omega=D_{1-\tau}\omega$. Hence, from \eqref{NV-Wtau4} and \eqref{NV-Sp8} we get \eqref{wigconR}.\\[0.2cm]
\indent (ii) By \eqref{Clambda} and \eqref{spwig}, for $f\in\mathcal{S}_\omega(\R^N)$ and $\lambda\geq 0$ we have that, for every $g\in\mathcal{S}_\omega(\R^N)$, $g\neq 0$,
\begin{align*}
	\|e^{\lambda\omega}Rf\|_{\infty}\leq D_{\lambda,\mu}^{(2)}\|e^{2(K^2\lambda+\mu)\omega}Sp_gf\|_{p}=D_{\lambda,\mu}^{(2)}\|e^{2(K^2\lambda+\mu)\omega}(\Wig_{1-\tau}(\tilde{g})\star \Wig_\tau (f))\|_{p},
\end{align*}
with $\mu>\frac{2N}{b(2p)'}$. We observe that by \eqref{sub}
\begin{align}
	&e^{2(K^2\lambda+\mu)\omega(z)}|\Wig_{1-\tau}(\tilde{g})\star \Wig_\tau (f)|(z)\leq \notag\\
	&\leq \int_{\R^{2N}} e^{2(K^2\lambda+\mu)\omega(z)}|\Wig_{1-\tau}(\tilde{g})(y)||\Wig_\tau (f)(z-y)|dy \notag\\
	&\;\leq\int_{\R^{2N}} e^{2K(K^2\lambda+\mu)(1+\omega(y)+\omega(z-y))}|\Wig_{1-\tau}(\tilde{g})(y)||\Wig_\tau (f)(z-y)|dy \notag\\
	&\;= e^{2K(K^2\lambda+\mu)}\left(e^{2K(K^2\lambda+\mu)\omega}|\Wig_{1-\tau}(\tilde{g})|\right)\star \left(e^{2K(K^2\lambda+\mu)\omega}|\Wig_\tau (f)|\right)(z). \label{NV-Wtau5}
\end{align}
Therefore, by Young's inequality we get
\begin{align*}
	\|e^{\lambda\omega}Rf\|_{\infty}&\leq D_{\lambda,\mu}^{(2)}\|e^{2(K^2\lambda+\mu)\omega}(\Wig_{1-\tau}(\tilde{g})\star \Wig_\tau (f))\|_{p}\\
	&\leq D_{\lambda,\mu}^{(2)} e^{2K(K^2\lambda+\mu)}\left\|\left(e^{2K(K^2\lambda+\mu)\omega}|\Wig_{1-\tau}(\tilde{g})|\right)\star \left(e^{2K(K^2\lambda+\mu)\omega}|\Wig_\tau (f)|\right)\right\|_p\\
	&\leq D_{\lambda,\mu}^{(2)} e^{2K(K^2\lambda+\mu)}\left\|e^{2K(K^2\lambda+\mu)\omega}\Wig_{1-\tau}(\tilde{g})\right\|_1\left\|e^{2K(K^2\lambda+\mu)\omega}\Wig_\tau (f)\right\|_p.
\end{align*}
Making the $\inf$ on all $g\in\mathcal{S}_\omega(\R^N)\setminus\{ 0\}$, we then obtain \eqref{Rconwig}.
\end{proof}

Similarly as in Proposition \ref{NV-SpR} we have the following uncertainty principle of Donoho-Stark type related to Theorem \ref{tauWR}. The proof is analogous as the one of Proposition \ref{NV-SpR} and it is omitted.

\begin{prop}[$\tau$-Wigner vs. Rihaczek Donoho-Stark principle]\label{NV-DS2}
	Let $\omega$ be a weight function, $E\subset\R^{2N}$ be a Lebesgue measurable set, $1\leq p\leq\infty$, $\tau\in(0,1)$ and $\varepsilon\in(0,1)$. Suppose that one of the following conditions holds:
	\begin{itemize}
	\item[(a)]
		There exist $f\in\mathcal{S}_\omega(\mathbb{R}^N)$, $f\neq 0$, and $\lambda\geq 0$ such that
		\begin{equation*}
			\int_E e^{\lambda\omega(x,\xi)} |\Wig_\tau (f)(x,\xi)|\,dxd\xi \geq (1-\varepsilon) \sqrt{D_\lambda^{(3)}}\|e^{4K^2\lambda\omega}Rf\|_p^{1/2} \|e^{4K^2\lambda\omega}Rf\|_{p'}^{1/2},
		\end{equation*}
	where $D_\lambda^{(3)}$ is the constant in \eqref{wigconR}.
	\item[(b)]
	There exist $f\in\mathcal{S}_\omega(\mathbb{R}^N)$, $f\neq 0$, $\lambda\geq 0$, and $\mu>\frac{2N}{b(2p)'}$ such that
	$$
	\int_E e^{\lambda\omega(x,\xi)} |R f(x,\xi)|\,dxd\xi\geq (1-\varepsilon) D_{\lambda,\mu}^{(4)}\|e^{2K(K^2\lambda+\mu)\omega}\Wig_\tau (f)\|_p,
	$$
	where $D_{\lambda,\mu}^{(4)}$ is the constant in \eqref{Rconwig}.
	\end{itemize}
	Then $m(E)\geq 1-\varepsilon$, where $m(E)$ is the Lebesgue measure of $E$.
\end{prop}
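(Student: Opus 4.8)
The plan is to follow verbatim the pattern of Proposition~\ref{NV-SpR}, now feeding in the mutual estimates of Theorem~\ref{tauWR} in place of those of Theorem~\ref{SptstimaconR}. The underlying mechanism is the trivial bound $\int_E |H|\,e^{\lambda\omega}\leq m(E)\,\|e^{\lambda\omega}H\|_\infty$, valid for any measurable $H$ and any measurable $E\subset\R^{2N}$, applied to $H=\Wig_\tau(f)$ in case (a) and to $H=Rf$ in case (b); the weighted $L^\infty$-norm on the right is then estimated by the appropriate weighted $L^p$-norm of the companion representation through Theorem~\ref{tauWR}. Matching this upper bound against the lower bound imposed by the hypothesis forces $m(E)\geq 1-\varepsilon$.

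In case (a), I would first rewrite Theorem~\ref{tauWR}(i): taking square roots in \eqref{wigconR} gives
\[
\|e^{\lambda\omega}\Wig_\tau(f)\|_\infty\leq \sqrt{D_\lambda^{(3)}}\,\|e^{4K^2\lambda\omega}Rf\|_p^{1/2}\,\|e^{4K^2\lambda\omega}Rf\|_{p'}^{1/2}.
\]
Since $\Wig_\tau(f)$ is bounded by this weighted supremum on $E$, I obtain
\[
\int_E e^{\lambda\omega(x,\xi)}|\Wig_\tau(f)(x,\xi)|\,dxd\xi\leq m(E)\,\sqrt{D_\lambda^{(3)}}\,\|e^{4K^2\lambda\omega}Rf\|_p^{1/2}\,\|e^{4K^2\lambda\omega}Rf\|_{p'}^{1/2}.
\]
Comparing with hypothesis (a) and cancelling the common factor $\sqrt{D_\lambda^{(3)}}\,\|e^{4K^2\lambda\omega}Rf\|_p^{1/2}\|e^{4K^2\lambda\omega}Rf\|_{p'}^{1/2}$ yields $m(E)\geq 1-\varepsilon$.

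Case (b) is even more direct: I would apply Theorem~\ref{tauWR}(ii) to get
\[
\int_E e^{\lambda\omega(x,\xi)}|Rf(x,\xi)|\,dxd\xi\leq m(E)\,\|e^{\lambda\omega}Rf\|_\infty\leq m(E)\,D_{\lambda,\mu}^{(4)}\|e^{2K(K^2\lambda+\mu)\omega}\Wig_\tau(f)\|_p,
\]
and compare with hypothesis (b), cancelling $D_{\lambda,\mu}^{(4)}\|e^{2K(K^2\lambda+\mu)\omega}\Wig_\tau(f)\|_p$ to conclude $m(E)\geq 1-\varepsilon$.

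The single point demanding attention — and the only candidate for an obstacle, though a very mild one — is the legitimacy of the cancellations, i.e. that the common factors are strictly positive and finite. Finiteness is guaranteed because $Rf,\Wig_\tau(f)\in\mathcal{S}_\omega(\R^{2N})$ (by the mapping properties recalled before Theorem~\ref{tauWR}, together with $\Wig_\tau$ mapping $\mathcal{S}_\omega\to\mathcal{S}_\omega$), so all their weighted Lebesgue norms are finite seminorms on $\mathcal{S}_\omega(\R^{2N})$; strict positivity follows from $f\neq 0$, which forces $\hat f\neq 0$ and hence $Rf\not\equiv 0$ and $\Wig_\tau(f)\not\equiv 0$, while the constants $D_\lambda^{(3)}$ and $D_{\lambda,\mu}^{(4)}$ are positive (the latter being an infimum of strictly positive, scale-invariant quantities). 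Granting this, the comparison of the integral bound against the hypothesis produces the desired lower bound on $m(E)$; no analytic input beyond Theorem~\ref{tauWR} is required, which is precisely why the authors omit the proof.
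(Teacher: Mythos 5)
Your proposal is correct and is exactly the argument the paper intends: the authors omit the proof precisely because it is the same $m(E)\,\|e^{\lambda\omega}H\|_\infty$ bound used for Proposition \ref{NV-SpR}, with Theorem \ref{tauWR} supplying the weighted $L^\infty$--$L^p$ estimates in place of Theorem \ref{SptstimaconR}. Your additional verification that the cancelled factors are finite (since $Rf,\Wig_\tau(f)\in\mathcal{S}_\omega(\R^{2N})$) and nonzero (since $f\neq 0$) matches the corresponding remark in the paper's proof of Proposition \ref{NV-SpR}.
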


\begin{rem}\label{NV-Rem2}
As already observed in Remark \ref{NV-Rem1}, in Theorems \ref{SptstimaconR} and \ref{tauWR}, when we estimate the Rihaczek form by the spectrogram and the $\tau$-Wigner, a weight of the kind $e^{\mu\omega}$ appears in the right-hand side of \eqref{Clambda} and \eqref{Rconwig}, with $\mu$ sufficiently large, and this constitutes a lack of symmetry with respect to \eqref{Blambda} and \eqref{wigconR}. We show now that this cannot be avoided in general. Consider for instance \eqref{Rconwig} for $\lambda=0$ and $\tau=1/2$:
$$
\| Rf\|_\infty\leq D_{0,\mu}^{(4)} \| e^{2K\mu\omega}\Wig(f)\|_p
$$
for $\mu>0$ sufficiently large. Consider now $p\geq 2$ and suppose that there exists $D>0$ such that
$$
\| Rf\|_\infty\leq D\| \Wig(f)\|_p
$$
for every $f\in\mathcal{S}_\omega(\R^N)$. Since $\Wig: L^2(\mathbb{R}^N)\times L^2(\mathbb{R}^N)\to L^p(\mathbb{R}^{2N})$ is bounded for every $p\geq 2$, we would obtain
\begin{align*}
	\|f\|_\infty\|\hat{f}\|_\infty=\|Rf\|_\infty \leq C\|f\|_2^2
\end{align*}
for some $C>0$. Then in particular the function $F(f):= \frac{\|f\|_\infty\|\hat{f}\|_\infty}{\|f\|_2^2}$ would be bounded, but this is in contradiction with \cite[Theorem 4.12]{WW}. Observe that also \eqref{Clambda} cannot be satisfied in general for $\mu=0$; indeed, if that were the case, then the proof of Theorem \ref{tauWR}(ii) would give \eqref{Rconwig} with $\mu=0$, that we have already shown that in general is not true.
\end{rem}

From the results that we have proved till now we easily obtain the following mutual control between the spectrogram and the $\tau$-Wigner, together with corresponding uncertainty principle of Donoho-Stark type.
\begin{cor}\label{NV-Cor}
Let $\omega$ be a weight function, $1\leq p\leq\infty$, $\tau\in(0,1)$ and $g\in\mathcal{S}_\omega(\R^N)$, $g\neq 0$.
\begin{itemize}
	\item[(i)] For every $\lambda\geq 0$ and $f\in\mathcal{S}_\omega(\R^N)$ we have
	\begin{align}\label{sp-less-wig}
	\|e^{\lambda\omega}Sp_gf\|_\infty\leq D_{\lambda}^{(5)}\|e^{K\lambda\omega}\Wig_\tau (f)\|_p,
	\end{align}
	where $D_{\lambda}^{(5)}=e^{K\lambda} \| e^{K\lambda\omega}\Wig_{1-\tau}(\tilde{g})\|_{p'}$, with $\tilde{g}(t):=g(-t)$.
	\item[(ii)] For every $\lambda\geq 0$, $\mu>\frac{2N}{b}(K^2 +\frac{2}{(2p)'})$ and $f\in\mathcal{S}_\omega(\R^N)$ we have
	\begin{equation}\label{NV-Wtau6}
	\|e^{\lambda\omega}\Wig_\tau (f)\|_\infty\leq D^{(6)}_{\lambda,\mu} \|e^{(8K^4\lambda+\mu)\omega}Sp_gf\|_p,
	\end{equation}
	where
	\begin{equation*}
	\begin{split}
	D^{(6)}_{\lambda,\mu} &=\frac{e^{\frac{K}{2}(1+2K)\lambda}}{(2\pi)^{\frac{5}{2}N}(\tau-\tau^2)^{\frac{N}{2}}\| g\|_2^4} \inf_{\substack{\mu',\mu'': \\ 2(K^2\mu'+\mu'')=\mu \\ \mu'>N/b,\ \mu''>\frac{2N}{b(2p)'}}} \left[ e^{K(1+2K)(4K^2\lambda+\mu')}\| e^{K^2(4K^2\lambda+\mu')\omega} g\|_\infty \right.\\
	&\qquad\left. \times \| e^{K^2(4K^2\lambda+\mu')\omega}\hat{g}\|_\infty \|e^{-\mu'\omega}\|_2 \| e^{-\mu''\omega}\|_{L^{(2p)'}(\R^{2N})}^2 \right].
	\end{split}
	\end{equation*}
\end{itemize}
\end{cor}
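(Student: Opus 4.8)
The plan is to obtain both inequalities by combining the convolution identity \eqref{spwig} with the estimates already proved, never working directly with $\Wig_\tau$ and the spectrogram but always routing through the subadditivity bound \eqref{sub} or through the Rihaczek form.

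For part (i) I would start from \eqref{spwig}, i.e. $Sp_g f=\Wig_{1-\tau}(\tilde g)\star\Wig_\tau(f)$, insert the weight $e^{\lambda\omega(z)}$ inside the convolution integral and split it via \eqref{sub} as $\omega(z)\le K(1+\omega(y)+\omega(z-y))$, exactly as in \eqref{NV-Wtau5}. This gives the pointwise estimate
\[
e^{\lambda\omega(z)}|Sp_g f(z)|\le e^{K\lambda}\big[(e^{K\lambda\omega}|\Wig_{1-\tau}(\tilde g)|)\star(e^{K\lambda\omega}|\Wig_\tau(f)|)\big](z).
\]
Taking the supremum over $z$ and bounding the sup-norm of a convolution by Hölder's inequality, $\|F\star G\|_\infty\le\|F\|_{p'}\|G\|_p$ (with $F$ the weighted window factor and $G$ the weighted $f$-factor), yields \eqref{sp-less-wig} with $D_\lambda^{(5)}=e^{K\lambda}\|e^{K\lambda\omega}\Wig_{1-\tau}(\tilde g)\|_{p'}$. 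This direction is immediate once \eqref{spwig} is available.

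For part (ii) the idea is to chain the two harder estimates: control $\Wig_\tau(f)$ by the Rihaczek form via Theorem \ref{tauWR}(i), then control the Rihaczek form by the spectrogram via Theorem \ref{SptstimaconR}(ii). Taking the square root of \eqref{wigconR} gives
\[
\|e^{\lambda\omega}\Wig_\tau(f)\|_\infty\le\sqrt{D_\lambda^{(3)}}\,\|e^{4K^2\lambda\omega}Rf\|_p^{1/2}\|e^{4K^2\lambda\omega}Rf\|_{p'}^{1/2},
\]
and $\sqrt{D_\lambda^{(3)}}=e^{\frac K2(1+2K)\lambda}/\big((2\pi)^{N/2}(\tau-\tau^2)^{N/2}\big)$ already supplies the first factor of $D_{\lambda,\mu}^{(6)}$. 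The next step converts this product of weighted $L^p$ and $L^{p'}$ norms of $Rf$ into the single weighted $L^\infty$ norm that Theorem \ref{SptstimaconR}(ii) can estimate: for an auxiliary exponent $\mu'>N/b$ one writes $e^{4K^2\lambda\omega}|Rf|=e^{-\mu'\omega}\cdot e^{(4K^2\lambda+\mu')\omega}|Rf|$, pulls out the sup and uses the integrability $e^{-\mu'\omega}\in L^2(\R^{2N})$ from \eqref{eq.Lpspazi} (this is exactly where the threshold $\mu'>N/b$ enters), arriving at a bound of the form
\[
\|e^{4K^2\lambda\omega}Rf\|_p^{1/2}\|e^{4K^2\lambda\omega}Rf\|_{p'}^{1/2}\le\|e^{-\mu'\omega}\|_2\,\|e^{(4K^2\lambda+\mu')\omega}Rf\|_\infty.
\]
Applying Theorem \ref{SptstimaconR}(ii) with $\lambda$ replaced by $4K^2\lambda+\mu'$ and $\mu$ replaced by $\mu''>\frac{2N}{b(2p)'}$ bounds $\|e^{(4K^2\lambda+\mu')\omega}Rf\|_\infty$ by $D_{4K^2\lambda+\mu',\mu''}^{(2)}\|e^{(8K^4\lambda+2K^2\mu'+2\mu'')\omega}Sp_g f\|_p$. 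Imposing $2(K^2\mu'+\mu'')=\mu$ turns the spectrogram weight into $e^{(8K^4\lambda+\mu)\omega}$, and taking the infimum over all admissible $(\mu',\mu'')$ reassembles exactly $D_{\lambda,\mu}^{(6)}=\sqrt{D_\lambda^{(3)}}\,\inf_{\mu',\mu''}\big[\|e^{-\mu'\omega}\|_2\,D_{4K^2\lambda+\mu',\mu''}^{(2)}\big]$, the constraints $\mu'>N/b$ and $\mu''>\frac{2N}{b(2p)'}$ yielding the hypothesis $\mu>\frac{2N}{b}(K^2+\frac{2}{(2p)'})$.

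The main obstacle is this middle step, the passage from the weighted $L^p$/$L^{p'}$ norms of $Rf$ to its weighted $L^\infty$ norm: it has to be arranged so that only the $L^2(\R^{2N})$-norm of the auxiliary weight $e^{-\mu'\omega}$ survives, which matches the threshold $\mu'>N/b$ rather than the larger $\frac{2N}{b}\max(1/p,1/p')$ that a naive separate application of Hölder to each of the two factors would force. Here one must be careful, since interpolating through $\|h\|_p\|h\|_{p'}\le\|h\|_1\|h\|_\infty$ and then Cauchy--Schwarz is the natural mechanism producing an $L^2$ weight; everything else is bookkeeping, namely tracking the exponential factors $e^{K^2(\cdots)}$ and the $g$-dependent sup-norms through the substitution $\lambda\mapsto 4K^2\lambda+\mu'$ in $D^{(2)}$, and checking that the three constants $\sqrt{D_\lambda^{(3)}}$, $\|e^{-\mu'\omega}\|_2$ and $D_{4K^2\lambda+\mu',\mu''}^{(2)}$ multiply together into the displayed $D_{\lambda,\mu}^{(6)}$.
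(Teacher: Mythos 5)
Your part (i) is correct and is exactly the paper's argument (identity \eqref{spwig}, weight splitting as in \eqref{NV-Wtau5}, Young/H\"older). Your plan for part (ii) also follows the paper's route in outline (chain \eqref{wigconR} with \eqref{Clambda} via the substitution $\lambda\mapsto 4K^2\lambda+\mu'$ and the constraint $2(K^2\mu'+\mu'')=\mu$, then take the infimum), and your final bookkeeping does reassemble $D^{(6)}_{\lambda,\mu}$ correctly. However, the step you yourself flag as the main obstacle is a genuine gap: the inequality
\begin{equation*}
\|e^{4K^2\lambda\omega}Rf\|_p^{1/2}\|e^{4K^2\lambda\omega}Rf\|_{p'}^{1/2}\leq\|e^{-\mu'\omega}\|_2\,\|e^{(4K^2\lambda+\mu')\omega}Rf\|_\infty,\qquad \mu'>\tfrac{N}{b},
\end{equation*}
does not follow from the mechanism you propose, and as a general inequality for nonnegative functions it is false. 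Writing $h:=e^{4K^2\lambda\omega}|Rf|$ and $H:=e^{\mu'\omega}h$, your chain $\|h\|_p\|h\|_{p'}\leq\|h\|_1\|h\|_\infty$ is fine, but to extract the weight from $\|h\|_1$ you must either bound $\|h\|_1\leq\|e^{-\mu'\omega}\|_1\|H\|_\infty$, which requires the stronger threshold $\mu'>\frac{2N}{b}$ and produces the constant $\|e^{-\mu'\omega}\|_1^{1/2}=\|e^{-(\mu'/2)\omega}\|_2$, or use Cauchy--Schwarz $\|h\|_1\leq\|e^{-\mu'\omega}\|_2\|H\|_2$, which leaves an $L^2$ norm of $H$ rather than a sup norm and forces a second weight. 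Either way, the sup-norm weight that actually survives is $e^{(4K^2\lambda+2\nu)\omega}$ relative to an $L^2$ weight $e^{-\nu\omega}$ with $\nu>N/b$; feeding this into \eqref{Clambda} proves the corollary only under the stronger hypothesis $\mu>\frac{2N}{b}(2K^2+\frac{2}{(2p)'})$ and with a constant larger than the stated $D^{(6)}_{\lambda,\mu}$. That your claimed step cannot hold as a general fact is seen at $p=\infty$ with $h=e^{-\mu'\omega}$: it would assert $\|e^{-\mu'\omega}\|_1^{1/2}\leq\|e^{-\mu'\omega}\|_2$, the strict reverse of the true inequality $\|e^{-\mu'\omega}\|_2^2\leq\|e^{-\mu'\omega}\|_1$ (valid since $e^{-\mu'\omega}\leq 1$); so any proof would have to exploit the special structure of $Rf$, which you do not do.

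The paper sidesteps all of this by decoupling the exponent used in the first step from the $p$ of the statement: it applies \eqref{wigconR} with exponent $2$, so that the middle quantity is the single norm $\|e^{4K^2\lambda\omega}Rf\|_2$, and then
\begin{equation*}
\|e^{4K^2\lambda\omega}Rf\|_2\leq\|e^{-\mu'\omega}\|_2\,\|e^{(4K^2\lambda+\mu')\omega}Rf\|_\infty
\end{equation*}
is an immediate H\"older bound with exactly the threshold $\mu'>\frac{N}{b}$ coming from \eqref{eq.Lpspazi}; the corollary's $p$ enters only afterwards, in the application of \eqref{Clambda} to $\|e^{(4K^2\lambda+\mu')\omega}Rf\|_\infty$. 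This is legitimate because $p$ does not appear on the left-hand side of \eqref{NV-Wtau6}, and since $\|u\|_2\leq\|u\|_p^{1/2}\|u\|_{p'}^{1/2}$, the inequality you were trying to prove is strictly stronger than what is needed. With this single modification (use $p=2$ in \eqref{wigconR} and keep the rest of your assembly unchanged) your argument becomes precisely the paper's proof.
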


\begin{proof}
(i) By \eqref{spwig}, proceeding as in \eqref{NV-Wtau5} we easily obtain the desired estimate by Young's inequality. \\[0.2cm]
\indent (ii) By \eqref{wigconR} with $p=2$ we obtain
$$
\| e^{\lambda\omega} \Wig_\tau(f)\|_\infty\leq \sqrt{D_\lambda^{(3)}} \| e^{4K^2\lambda\omega} Rf\|_2\leq \sqrt{D_\lambda^{(3)}} \| e^{-\mu'\omega}\|_2 \| e^{(4K^2\lambda+\mu')\omega} Rf\|_\infty
$$
for every $\mu'>\frac{N}{b}$. We then get the conclusion by applying \eqref{Clambda} to $\| e^{(4K^2\lambda+\mu')\omega} Rf\|_\infty$.
\end{proof}
Similarly as in Proposition \ref{NV-SpR}, from Corollary \ref{NV-Cor} we can deduce the following \emph{``spectrogram vs. $\tau$-Wigner Donoho-stark principle''}. The proof is left to the reader.
\begin{cor}\label{2023New}
Let $E\subset\R^{2N}$ be a Lebesgue measurable set, $\varepsilon\in(0,1)$, $1\leq p\leq\infty$, and suppose that one of the following conditions holds:
\begin{itemize}
	\item[(a)]
	There exist $f,g\in\mathcal{S}_\omega(\R^N)$, $f,g\neq 0$, and $\lambda\geq 0$ such that
	$$
	\int_E e^{\lambda\omega(x,\xi)} |Sp_gf(x,\xi)|\,dx d\xi\geq (1-\varepsilon) D_\lambda^{(5)}\|e^{K\lambda\omega}\Wig_\tau (f)\|_p
	$$
	where $D_\lambda^{(5)}$ is the constant in point {\rm (i)} of Corollary \ref{NV-Cor}.
	\item[(b)]
	There exist $f,g\in\mathcal{S}_\omega(\mathbb{R}^N)$, $f,g\neq 0$, $\lambda\geq 0$ and $\mu >\frac{2N}{b}(K^2 +\frac{2}{(2p)'})$ such that
	$$
	\int_E e^{\lambda\omega(x,\xi)} |\Wig_\tau(f)(x,\xi)|\,dxd\xi \geq (1-\varepsilon) D_{\lambda,\mu}^{(6)}\|e^{(8K^4\lambda+\mu)\omega}Sp_gf\|_{p},
	$$
	where $D_{\lambda,\mu}^{(6)}$ is the constant in point {\rm (ii)} of Corollary \ref{NV-Cor}.
\end{itemize}
Then $m(E)\geq 1-\varepsilon$, where $m(E)$ is the Lebesgue measure of $E$.
\end{cor}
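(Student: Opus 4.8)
The plan is to follow verbatim the scheme of Proposition \ref{NV-SpR}, simply trading the pair (spectrogram, Rihaczek) for the pair (spectrogram, $\tau$-Wigner) and invoking the mutual estimates of Corollary \ref{NV-Cor} in place of those of Theorem \ref{SptstimaconR}. The only analytic input is the trivial bound asserting that the integral over $E$ of a nonnegative weighted representation is dominated by $m(E)$ times its weighted $L^\infty$-norm; everything else is bookkeeping with the explicit constants $D_\lambda^{(5)}$ and $D^{(6)}_{\lambda,\mu}$ already produced in Corollary \ref{NV-Cor}.

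For case (a) I would first estimate
\[
\int_E e^{\lambda\omega(x,\xi)}|Sp_gf(x,\xi)|\,dxd\xi\leq m(E)\,\|e^{\lambda\omega}Sp_gf\|_\infty,
\]
and then apply inequality \eqref{sp-less-wig} of Corollary \ref{NV-Cor}(i) to the right-hand side, obtaining
\[
\int_E e^{\lambda\omega(x,\xi)}|Sp_gf(x,\xi)|\,dxd\xi\leq m(E)\,D_\lambda^{(5)}\|e^{K\lambda\omega}\Wig_\tau(f)\|_p.
\]
Comparing this with the assumption in (a) forces $m(E)\,D_\lambda^{(5)}\|e^{K\lambda\omega}\Wig_\tau(f)\|_p\geq (1-\varepsilon)\,D_\lambda^{(5)}\|e^{K\lambda\omega}\Wig_\tau(f)\|_p$, and dividing by the common positive factor yields $m(E)\geq 1-\varepsilon$.

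Case (b) is entirely symmetric: I would bound $\int_E e^{\lambda\omega}|\Wig_\tau(f)|\,dxd\xi\leq m(E)\,\|e^{\lambda\omega}\Wig_\tau(f)\|_\infty$, replace the $L^\infty$-norm on the right by $D^{(6)}_{\lambda,\mu}\|e^{(8K^4\lambda+\mu)\omega}Sp_gf\|_p$ via inequality \eqref{NV-Wtau6} of Corollary \ref{NV-Cor}(ii), and then compare with the hypothesis and cancel the common factor to conclude $m(E)\geq 1-\varepsilon$.

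The only step needing a word of justification, and the sole potential obstacle, is the legitimacy of dividing out the common factor, i.e. the fact that the representations appearing on the right do not vanish identically. For $f,g\neq 0$ the STFT $V_gf$ is not identically zero, hence neither is $Sp_gf=|V_gf|^2$; and for $f\neq 0$ the function $\Wig_\tau(f)$ is not identically zero, as is immediate from the identity \eqref{wigvgf} together with the non-vanishing of $V_{A_\tau f}f$. Consequently both $\|e^{K\lambda\omega}\Wig_\tau(f)\|_p$ and $\|e^{(8K^4\lambda+\mu)\omega}Sp_gf\|_p$ are strictly positive, so the cancellations above are valid and the proof is complete.
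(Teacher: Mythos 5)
Your proposal is correct and is precisely the argument the paper intends: it says the proof is ``similarly as in Proposition \ref{NV-SpR}'' and leaves it to the reader, and your chain (bound the integral over $E$ by $m(E)$ times the weighted $L^\infty$-norm, apply \eqref{sp-less-wig} resp. \eqref{NV-Wtau6}, then cancel the common positive factor) is exactly that scheme. Your extra care in justifying the nonvanishing of $Sp_gf$ and $\Wig_\tau(f)$ via the STFT and the identity \eqref{wigvgf} matches the role of the corresponding remark in the proof of Proposition \ref{NV-SpR}, so nothing is missing.
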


\begin{rem}\label{BJ1}
The estimates \eqref{wigconR} and \eqref{NV-Wtau6} can be easily extended, in the case $N=1$, to the Born-Jordan representation \eqref{BJdef}. For instance, from \eqref{BJexpression} and \eqref{wigconR} we have
\begin{equation}\label{tauintegral}
\begin{split}
\| e^{\lambda\omega} \BJ(f)\|_\infty &\leq \int_0^1 \|e^{\lambda\omega} \Wig_\tau(f)\|_\infty\,d\tau \\
&\leq \left[\frac{e^{\frac{K}{2}(1+2K)\lambda}}{\sqrt{2\pi}} \int_0^1\frac{1}{\sqrt{\tau-\tau^2}}\,d\tau\right] \|e^{4K^2\lambda\omega}Rf\|^{1/2}_p\|e^{4K^2\lambda\omega}Rf\|^{1/2}_{p'}.
\end{split}
\end{equation}
Since the $\tau$-integral in the right-hand side of \eqref{tauintegral} is convergent we have
\begin{equation}\label{Dlambda4}
\| e^{\lambda\omega} \BJ(f)\|^2_\infty\leq D_\lambda^{(7)} \|e^{4K^2\lambda\omega}Rf\|_p\|e^{4K^2\lambda\omega}Rf\|_{p'}
\end{equation}
for every $1\leq p\leq \infty$, $\lambda\geq 0$ and $f\in\mathcal{S}_\omega(\R)$. Similarly, from \eqref{NV-Wtau6} we get
$$
\| e^{\lambda\omega}\BJ(f)\|_\infty\leq D^{(8)}_{\lambda,\mu} \| e^{(8K^4\lambda+\mu)\omega} Sp_g f\|_p
$$
for every $1\leq p\leq\infty$, $g\in\mathcal{S}_\omega(\R)\setminus\{ 0\}$, $\lambda\geq 0$, $\mu>\frac{2}{b}(K^2+\frac{2}{(2p)'})$ and $f\in\mathcal{S}_\omega(\R)$.
\end{rem}

The next aim is to prove other results where a representation is controlled by another one; as in the previous cases, each time we have such a control we also have a corresponding uncertainty principle of Donoho-Stark type, similarly as in Propositions \ref{NV-SpR}, \ref{NV-DS2} and Corollary \ref{2023New}. Since there are no substantial differences with respect to those cases, from now on we only give the estimates involving the representations; the statement of the corresponding uncertainty principles is left to the reader. We want to analyze representations in the Cohen class (cf. Definition \ref{NV-DefQ}). First of all, from the previous results it is not difficult to control a representation in the Cohen class by the Wigner transform, the Rihaczek form and the Spectrogram.

\begin{cor}\label{Qsigmastimaconwig}
	Let $\omega$ be a weight function and $1\leq p\leq\infty$.
	\begin{itemize}
		\item[(i)] Fix a kernel $\sigma\in\mathcal{S}'_\omega(\mathbb{R}^{2N})$ satisfying $\|e^{\nu\omega}\sigma\|_{p'}<\infty$ for every $\nu\geq0$. Then for every $\lambda\geq 0$ and $f\in\cS_\omega(\R^N)$ we have
		\begin{align}
		&\|e^{\lambda\omega}Q_\sigma f\|_\infty\leq D_\lambda^{(9)}\|e^{K\lambda\omega}\Wig(f)\|_p, \label{Qsigmaconwig}
		\end{align}
		where $D_\lambda^{(9)}=e^{K\lambda} \| e^{K\lambda\omega}\sigma\|_{p'}$.
		\item[(ii)] Let $\sigma\in\mathcal{S}'_\omega(\mathbb{R}^{2N})$ satisfy $\|e^{\nu\omega}\sigma\|_1<\infty$ for every $\nu\geq0$. Then for every $\lambda\geq 0$, $\mu >\frac{2N}{b(2p)'}$ and $f\in\cS_\omega(\R^N)$ we have
		\begin{eqnarray}
		&\|e^{\lambda\omega}Q_\sigma f\|_\infty^2\leq D_\lambda^{(10)}\|e^{4K^3\lambda\omega}Rf\|_p \|e^{4K^3\lambda\omega}Rf\|_{p'}, \label{Dlambda10}
		\end{eqnarray}
		where $D_\lambda^{(10)}=(\frac{4}{2\pi})^N e^{K(2+K+2K^2)\lambda} \| e^{K\lambda\omega}\sigma\|_1^2$.
		\item[(iii)] Let $\sigma$ be as in point {\rm (ii)} and $g\in\mathcal{S}(\R^N)$, $g\neq 0$. Then for every $\lambda\geq 0$, $\mu>\frac{2N}{b}(K^2 +\frac{2}{(2p)'})$ and $f\in\mathcal{S}_\omega(\R^N)$ we have
		\begin{eqnarray}
		&\|e^{\lambda\omega}Q_\sigma f\|_\infty\leq D^{(11)}_{\lambda,\mu} \|e^{(8K^5\lambda+\mu)\omega}Sp_gf\|_p, \label{neq-Q-Sp}
		\end{eqnarray}
		where $D^{(11)}_{\lambda,\mu}=e^{K\lambda} D_{\lambda,\mu}^{(6)} \| e^{K\lambda\omega}\sigma\|_1$ and $D^{(6)}_{\lambda,\mu}$ is the constant in Corollary \ref{NV-Cor} for $\tau=\frac{1}{2}$.
	\end{itemize}
\end{cor}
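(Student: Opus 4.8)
The plan is to exploit, in all three points, the fact that by definition $Q_\sigma f=\sigma\star\Wig(f)$ is a convolution, so that each estimate reduces to inserting the weight into the convolution via \eqref{sub} and then applying Young's inequality, followed (in (ii) and (iii)) by one of the mutual controls already established. For point (i) I would start from the pointwise bound $|Q_\sigma f(z)|\leq\int_{\R^{2N}}|\sigma(y)|\,|\Wig(f)(z-y)|\,dy$ and use $\omega(z)\leq K(1+\omega(y)+\omega(z-y))$ from \eqref{sub} to factor the weight across the two factors, obtaining
$$e^{\lambda\omega(z)}|Q_\sigma f(z)|\leq e^{K\lambda}\bigl(e^{K\lambda\omega}|\sigma|\bigr)\star\bigl(e^{K\lambda\omega}|\Wig(f)|\bigr)(z).$$
Taking the $L^\infty$-norm and applying Young's inequality in the form $\|h_1\star h_2\|_\infty\leq\|h_1\|_{p'}\|h_2\|_p$ then yields \eqref{Qsigmaconwig} with $D_\lambda^{(9)}=e^{K\lambda}\|e^{K\lambda\omega}\sigma\|_{p'}$; here the hypothesis $\|e^{\nu\omega}\sigma\|_{p'}<\infty$ is exactly what guarantees finiteness of the $L^{p'}$ factor.

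For point (ii) I would run the same weighted convolution estimate, but now pair $\sigma$ (in weighted $L^1$) with $\Wig(f)$ (in weighted $L^\infty$), so that Young gives
$$\|e^{\lambda\omega}Q_\sigma f\|_\infty\leq e^{K\lambda}\|e^{K\lambda\omega}\sigma\|_1\,\|e^{K\lambda\omega}\Wig(f)\|_\infty.$$
I would then square this and invoke the Wigner--Rihaczek control \eqref{wigconR} with $\tau=\tfrac12$, noting that $(\tau-\tau^2)^N=4^{-N}$ so that $D^{(3)}=\bigl(\tfrac{4}{2\pi}\bigr)^Ne^{K(1+2K)\cdot}$, and with $\lambda$ replaced by $K\lambda$ to bound $\|e^{K\lambda\omega}\Wig(f)\|_\infty^2$ by the Rihaczek factors. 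Collecting the prefactors $e^{2K\lambda}$ and $e^{K^2(1+2K)\lambda}$ into $e^{K(2+K+2K^2)\lambda}$, and observing that the substitution $\lambda\mapsto K\lambda$ turns the weight $e^{4K^2\lambda\omega}$ of \eqref{wigconR} into $e^{4K^3\lambda\omega}$, produces exactly \eqref{Dlambda10} with the stated $D_\lambda^{(10)}$.

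For point (iii) I would reuse verbatim the first step of (ii), bounding $\|e^{\lambda\omega}Q_\sigma f\|_\infty$ by $e^{K\lambda}\|e^{K\lambda\omega}\sigma\|_1\,\|e^{K\lambda\omega}\Wig(f)\|_\infty$, and then, in place of the Rihaczek control, apply the spectrogram control \eqref{NV-Wtau6} of Corollary \ref{NV-Cor}(ii) with $\tau=\tfrac12$ and again $\lambda$ replaced by $K\lambda$. This substitution sends the exponent $8K^4\lambda$ to $8K^5\lambda$, matching the weight in \eqref{neq-Q-Sp}, and the constant assembles from $e^{K\lambda}\|e^{K\lambda\omega}\sigma\|_1$ times the constant of Corollary \ref{NV-Cor}(ii), giving $D^{(11)}_{\lambda,\mu}$. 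Since all the genuine analytic content is already packaged in \eqref{wigconR} and \eqref{NV-Wtau6}, I expect no conceptual obstacle here: the corollary is a chaining of the previous estimates through the convolution structure, and the only delicate point is the purely computational bookkeeping of how the constant $K$ accumulates through the repeated substitutions $\lambda\mapsto K\lambda$ and how the weight exponents transform accordingly.
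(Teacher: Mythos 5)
Your proposal is correct and follows essentially the same route as the paper: point (i) by inserting the weight into the convolution via \eqref{sub} and applying Young's inequality, and points (ii)--(iii) by the case $p=\infty$ of (i) (i.e.\ pairing $e^{K\lambda\omega}\sigma$ in $L^1$ with $e^{K\lambda\omega}\Wig(f)$ in $L^\infty$) chained with \eqref{wigconR} and \eqref{NV-Wtau6} for $\tau=\tfrac12$ under the substitution $\lambda\mapsto K\lambda$. Your constant bookkeeping ($(\tau-\tau^2)^N=4^{-N}$, $e^{2K\lambda}e^{K^2(1+2K)\lambda}=e^{K(2+K+2K^2)\lambda}$, and $8K^4\lambda\mapsto 8K^5\lambda$) matches the stated $D_\lambda^{(10)}$ and $D^{(11)}_{\lambda,\mu}$ exactly.
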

\begin{proof}
	(i) Arguing as in the proof of Theorem \ref{tauWR} we have
	\begin{align*}
	\|e^{\lambda\omega}Q_\sigma f\|_\infty&\leq e^{K\lambda}\|(e^{K\lambda\omega}\sigma)\star (e^{K\lambda\omega}\Wig(f))\|_\infty\\&\leq e^{K\lambda}\|e^{K\lambda\omega}\sigma\|_{p'}\|e^{K\lambda\omega}\Wig(f)\|_p.
	\end{align*}
	The points (ii) and (iii) are an easy consequence of point (i) for $p=\infty$ and \eqref{wigconR} and \eqref{NV-Wtau6}, respectively.
\end{proof}
\begin{rem}
	As examples of kernels that satisfy $\|e^{\nu\omega}\sigma\|_{p}<\infty$ for every $\nu\geq0$ and $1\leq p\leq \infty$, we can take $\omega$-ultradifferentiable rapidly decreasing functions of Beurling type.
\end{rem}

Finally, we give estimates between two general time-frequency representations in the Cohen class.
\begin{thm}\label{QsigmastimaconQ}
	Let $\omega$ be a non-quasianalytic weight function and consider $1\leq p\leq\infty$. Fix two kernels $\sigma_1,\sigma_2\in\mathcal{S}_\omega'(\mathbb{R}^{2N})$ such that $\sigma_2\in\cO'_{C,\omega}(\R^{2N})$ with $0\notin\overline{Im(\widehat{\sigma_2})}$, and $\sigma_1\in\cS_\omega(\R^N)$. For every $\lambda\geq0$ there exists a positive constant $D_\lambda^{(12)}$ such that for every $f\in\cS_\omega(\R^N)$
	\begin{align}
	&\|e^{\lambda\omega}Q_{\sigma_1} f\|_\infty\leq D_\lambda^{(12)} \|e^{K\lambda\omega}Q_{\sigma_2}f\|_p. \label{QsigmaconQ}
	\end{align}
\end{thm}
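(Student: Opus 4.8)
The plan is to transfer the problem to the Fourier side, where convolution becomes multiplication, and to exhibit $Q_{\sigma_1}f$ as the convolution of $Q_{\sigma_2}f$ against a \emph{fixed} kernel lying in $\mathcal{S}_\omega(\R^{2N})$; once this is achieved the estimate \eqref{QsigmaconQ} follows exactly as in Corollary \ref{Qsigmastimaconwig}(i). First I would record the relevant Fourier identities. Since $f\in\mathcal{S}_\omega(\R^N)$ we have $\Wig(f)\in\mathcal{S}_\omega(\R^{2N})$, and because $\sigma_2\in\cO'_{C,\omega}(\R^{2N})$ is a convolutor, $Q_{\sigma_2}f=\sigma_2\star\Wig(f)\in\mathcal{S}_\omega(\R^{2N})$ with, by \eqref{eq.FT-C},
$$
\widehat{Q_{\sigma_2}f}=\widehat{\Wig(f)}\,\widehat{\sigma_2},\qquad \widehat{Q_{\sigma_1}f}=\widehat{\Wig(f)}\,\widehat{\sigma_1}.
$$
The hypothesis $\sigma_2\in\cO'_{C,\omega}(\R^{2N})$ gives $\widehat{\sigma_2}\in\cO_{M,\omega}(\R^{2N})$, in particular $\widehat{\sigma_2}\in C^\infty$, while $0\notin\overline{\mathrm{Im}(\widehat{\sigma_2})}$ means precisely $\inf_{z}|\widehat{\sigma_2}(z)|>0$. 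Dividing, $\widehat{\Wig(f)}=\widehat{Q_{\sigma_2}f}/\widehat{\sigma_2}$, whence $\widehat{Q_{\sigma_1}f}=(\widehat{\sigma_1}/\widehat{\sigma_2})\,\widehat{Q_{\sigma_2}f}$. Setting $\rho:=\mathcal{F}^{-1}(\widehat{\sigma_1}/\widehat{\sigma_2})$ and using the convolution theorem (legitimate once $\rho$ is known to be a convolutor) together with the injectivity of $\mathcal{F}$ on $\mathcal{S}'_\omega$, this reads $Q_{\sigma_1}f=\rho\star Q_{\sigma_2}f$.

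The heart of the argument is to prove $\rho\in\mathcal{S}_\omega(\R^{2N})$. Since $\mathcal{F}$ is an isomorphism of $\mathcal{S}_\omega(\R^{2N})$ (Remark \ref{in s}(4)), it suffices to show $\widehat{\sigma_1}/\widehat{\sigma_2}\in\mathcal{S}_\omega(\R^{2N})$. I would obtain this from the multiplier description of $\cO_{M,\omega}$: as $\widehat{\sigma_1}\in\mathcal{S}_\omega(\R^{2N})$ (again Remark \ref{in s}(4), since $\sigma_1\in\mathcal{S}_\omega$), it is enough to prove the reciprocal estimate $1/\widehat{\sigma_2}\in\cO_{M,\omega}(\R^{2N})$, for then $\widehat{\sigma_1}\cdot(1/\widehat{\sigma_2})\in\mathcal{S}_\omega(\R^{2N})$ because $\cO_{M,\omega}$ is exactly the space of multipliers of $\mathcal{S}_\omega$.

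Proving this reciprocal estimate is the main obstacle. Writing $h:=\widehat{\sigma_2}$, a Faà di Bruno expansion represents $\partial^\alpha(1/h)$ as a combination of terms $h^{-(r+1)}\prod_{j=1}^r\partial^{\gamma_j}h$ with $\gamma_1+\dots+\gamma_r=\alpha$; the superadditivity of $\varphi^*_\omega$ (coming from its convexity and $\varphi^*_\omega(0)=0$) collects the ultradifferentiable factors into a single $e^{m\varphi^*_\omega(|\alpha|/m)}$, and the combinatorial constants are absorbed by enlarging $m$. The delicate point is the $\omega$-growth: each factor $\partial^{\gamma_j}h$ carries a factor $e^{n\omega}$, so naively the numerator contributes $e^{rn\omega}$ with $r$ as large as $|\alpha|$, which cannot be absorbed into a single $e^{n'\omega}$ with $n'$ independent of $\alpha$. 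The resolution is that this growth is compensated by the decay of $h^{-(r+1)}$: wherever the derivatives of $h$ genuinely grow like $e^{n\omega}$ the function $h$ itself grows, and the $r+1$ powers in the denominator offset the $r$ powers in the numerator. Making this compensation uniform in $\alpha$ is precisely the content of the closedness of $\cO_{M,\omega}$ under reciprocals of functions bounded away from zero, which I would establish directly or invoke from the theory of $\cO_{M,\omega}$ developed in \cite{AC,AC2}.

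Finally, with $\rho\in\mathcal{S}_\omega(\R^{2N})\subset\cO'_{C,\omega}(\R^{2N})$ in hand, the identity $Q_{\sigma_1}f=\rho\star Q_{\sigma_2}f$ is justified, and the estimate is obtained verbatim as in Corollary \ref{Qsigmastimaconwig}(i): by \eqref{sub},
$$
e^{\lambda\omega(z)}|Q_{\sigma_1}f(z)|\le e^{K\lambda}\bigl(e^{K\lambda\omega}|\rho|\bigr)\star\bigl(e^{K\lambda\omega}|Q_{\sigma_2}f|\bigr)(z),
$$
and Young's inequality for the pair $(p',p)$, whose convolution lands in $L^\infty$, yields
$$
\|e^{\lambda\omega}Q_{\sigma_1}f\|_\infty\le e^{K\lambda}\|e^{K\lambda\omega}\rho\|_{p'}\,\|e^{K\lambda\omega}Q_{\sigma_2}f\|_p.
$$
Thus \eqref{QsigmaconQ} holds with $D_\lambda^{(12)}=e^{K\lambda}\|e^{K\lambda\omega}\rho\|_{p'}$, finite since $\rho\in\mathcal{S}_\omega(\R^{2N})$ (Proposition \ref{P.norme}). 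I expect the reciprocal estimate $1/\widehat{\sigma_2}\in\cO_{M,\omega}$ to be the only genuinely nontrivial step; everything else is a rearrangement of the convolution identities and a repetition of the already-used estimate.
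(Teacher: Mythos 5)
Your proposal is correct and follows essentially the same route as the paper: transfer to the Fourier side via \eqref{eq.FT-C}, use $0\notin\overline{\mathrm{Im}(\widehat{\sigma_2})}$ together with the multiplier theory to get $1/\widehat{\sigma_2}\in\cO_{M,\omega}(\R^{2N})$ (the paper likewise defers this reciprocal lemma to the literature, citing the classical-case proof in \cite{AC3}), write $Q_{\sigma_1}f=\mathcal{F}^{-1}\bigl(\widehat{\sigma_1}/\widehat{\sigma_2}\bigr)\star Q_{\sigma_2}f$, and conclude with \eqref{sub} and Young's inequality, arriving at the same constant $D_\lambda^{(12)}=e^{K\lambda}\bigl\|e^{K\lambda\omega}\mathcal{F}^{-1}\bigl(\widehat{\sigma_1}/\widehat{\sigma_2}\bigr)\bigr\|_{p'}$.
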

\begin{proof}
	We observe that since $\sigma_2\in\cO'_{C,\omega}(\R^N)$, then $\widehat{\sigma_2}\in \cO_{M,\omega}(\R^N)$ (see \cite[Theorem 6.1]{AC2}). The fact that $\widehat{\sigma_2}$ is a multiplier of $\mathcal{S}_\omega(\mathbb{R}^N)$ together with the assumption $0\notin\overline{Im(\widehat{\sigma_2})}$ implies that $\frac{1}{\widehat{\sigma_2}}\in \cO_{M,\omega}(\R^N)$ (for a proof in the classical case see \cite[Lemma 3.1]{AC3}).
	
	Now, fixed $f\in\cS_{\omega}(\R^N)$ and $\lambda\geq0$, using \eqref{eq.FT-C} we have
	\begin{align*}
	    Q_{\sigma_1}f&=\sigma_1\star \Wig (f)=\mathcal{F}^{-1}(\widehat{\sigma_1}\widehat{\Wig (f)})=\mathcal{F}^{-1}\left(\frac{\widehat{\sigma_1}\widehat{\sigma_2}}{\widehat{\sigma_2}}\widehat{\Wig (f)}\right)\\&=\mathcal{F}^{-1}\left(\frac{\widehat{\sigma_1}}{\widehat{\sigma_2}}\right)\star Q_{\sigma_2}f.
	\end{align*}
	Arguing as in the proof of Theorem \ref{tauWR}, we get
	\begin{align}\label{local-us-est}
	    \|e^{\lambda\omega}Q_{\sigma_1} f\|_\infty\leq e^{K\lambda}\left\|e^{K\lambda\omega} \mathcal{F}^{-1}\left(\frac{\widehat{\sigma_1}}{\widehat{\sigma_2}}\right)\right\|_{p'} \|e^{K\lambda\omega}Q_{\sigma_2}f\|_p,
	\end{align}
	where $ p'$ is the conjugate exponent of $p$. Setting $D_\lambda^{(12)}:=e^{K\lambda}\left\|e^{K\lambda\omega} \mathcal{F}^{-1}\left(\frac{\widehat{\sigma_1}}{\widehat{\sigma_2}}\right)\right\|_{p'}$, we get \eqref{QsigmaconQ}. Observe that $D_\lambda^{(12)}<\infty$ since $\widehat{\sigma_1}\in \cS_\omega(\R^N)$ and $\frac{1}{\widehat{\sigma_2}}\in \cO_{M,\omega}(\R^N)$.
\end{proof}

\begin{ex}
	As examples of kernels $\sigma_2$ satisfying the hypotheses of Theorem \ref{QsigmastimaconQ}, we can consider the following.
	\begin{itemize}
		\item[(1)] Let
		$$
		\sigma_2(t,\eta)=a\delta + e^{-ct^2-d\eta^2},
		$$
		where $c,d>0$, $a\in\mathbb{R}\setminus [-1,0]$, and $\delta$ is the Dirac distribution in $\mathbb{R}^{2N}$. We have
		$$
		\widehat{\sigma_2}(x,\xi)=a+\left(\frac{\pi^2}{cd}\right)^{N/2}e^{-\frac{x^2}{4c}-\frac{\xi^2}{4d}}
		$$
		and the requested conditions are satisfied; then for every $\sigma_1\in\cS_\omega(\R^N)$, we have that \eqref{QsigmaconQ} holds for all $f\in\mathcal{S}_\omega(\mathbb{R}^N)$.
		\item[(2)] Let
		$$
		\sigma_2(x,\xi)=\mathcal{F}^{-1} p(x,\xi),
		$$
		where $\mathcal{F}^{-1}$ is the inverse Fourier transform in $\mathbb{R}^{2N}$ and $p(x,\xi)$ is a non vanishing polynomial, say,
		$$
		p(x,\xi)=\sum_{|\alpha|+|\beta|\leq m} c_{\alpha\beta}x^\alpha\xi^\beta.
		$$
		If $p(x,\xi)\neq 0$ for every $(x,\xi)\in\mathbb{R}^{2N}$, we have, indeed, that the condition $0\notin\overline{Im(\widehat{\sigma_2})}$ is satisfied, and moreover, $\sigma_2\in\cO'_{C,\omega}(\R^{2N})$. Observe that in this case
		\begin{equation*}
		\begin{split}
		Q_{\sigma_2}f(x,\xi) &=\sigma_2\star \Wig (f)(x,\xi) \\
		&=\sum_{|\alpha|+|\beta|\leq m} c_{\alpha\beta}D^\alpha_x D^\beta_\xi (\Wig (f))(x,\xi) = p(D_x,D_\xi)\Wig (f)
		\end{split}
		\end{equation*}
		is the differential operator of symbol $p$ applied to $\Wig (f)$. Then for every $\sigma_1\in\cS_\omega(\R^N)$ we have
		$$
		\| e^{\lambda\omega}Q_{\sigma_1}f\|_\infty\leq D_\lambda^{(12)} \|e^{K\lambda\omega}p(D_x,D_\xi)\Wig (f)\|_p
		$$
		for every $f\in\mathcal{S}_\omega(\mathbb{R}^N)$.
	\end{itemize}
\end{ex}
\begin{rem}
	In the estimates between representations that we have proved, we have always considered in the left-hand sides the sup norm $\|\cdot\|_\infty$, since this is what is needed for the corresponding Donoho-Stark type uncertainty principles. Observe however that we can easily get estimates with $L^q$-norms in the left-hand side by increasing $\lambda$ in the exponential containing the weight $\omega$;  indeed,  given $1\leq q<\infty$ and a function $F$ on $\mathbb{R}^{2N}$, we have for every $\lambda\geq0$
	\begin{align*}
	\|e^{\lambda\omega}F\|_q=\|e^{-\mu\omega}e^{(\lambda+\mu)\omega}F\|_q\leq \|e^{-\mu\omega}\|_q\|e^{(\lambda+\mu)\omega}F\|_\infty,
	\end{align*}
	with $\mu>\frac{2N}{bq}$, using \eqref{eq.Lpspazi}.
\end{rem}

\section{Local uncertainty principles for representations in $\cS_\omega(\R^N)$} \label{Sec-local-up}

As applications of the estimates proved in Section \ref{section-estimates}, we can give uncertainty principles of local type for time-frequency representations in the space $\cS_\omega(\R^N)$. We start by recalling the Price local uncertainty principle for the Fourier transform, cf. \cite{P}.
 \begin{thm}\label{price}
	Let $E\subset \R^N$ be a Lebesgue measurable set and $\alpha>\frac{N}{2}$. Then for every $f\in L^2(\R^N)$, $f\neq 0$, and $\overline{t}, \overline{\xi}\in\R^N$ we have
	 \begin{align}
&	\int_{E} |\hat{f}(\xi)|^2\,d\xi<K'm(E)\|f\|^{2-\frac{N}{\alpha}}_2\||t-\overline{t}|^\alpha f\|_2^{\frac{N}{\alpha}}\label{1stim}\\
&	\int_{E} |f(t)|^2\,dt<K'm(E)\|\hat{f}\|^{2-\frac{N}{\alpha}}_2\||\xi-\overline{\xi}|^\alpha \hat{f}\|_2^{\frac{N}{\alpha}}\label{2stim},
	\end{align}
	where $m(E)$ is the Lebesgue measure of the set $E$ and
	\begin{align}\label{cost}
	K':=\frac{\pi^{\frac{N}{2}}}{\alpha}\left(\Gamma\left(\frac{N}{2}\right)\right)^{-1}\Gamma\left(\frac{N}{2\alpha}\right)\Gamma\left(1-\frac{N}{2\alpha}\right)\left(\frac{2\alpha}{N}-1\right)^{\frac{N}{2\alpha}}\left(1-\frac{N}{2\alpha}\right)^{-1},
	\end{align}
	where $\Gamma$ is the Euler function given by $\Gamma(x):=\int_{0}^{\infty}t^{x-1}e^{-t}\, dt$. The constant $K'$ is optimal, and equality in \eqref{1stim}-\eqref{2stim} is never attained for $f\neq0$.
\end{thm}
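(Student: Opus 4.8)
The plan is to prove the first inequality \eqref{1stim} and then obtain \eqref{2stim} from it by duality, interchanging the roles of $f$ and $\hat f$ via the Fourier inversion formula. By the translation invariance of the problem we may assume $\overline t=0$: replacing $f(t)$ by $f(t+\overline t)$ multiplies $\hat f$ by a unimodular factor, hence leaves $|\hat f|$ and $\int_E|\hat f|^2$ unchanged, while $\||t-\overline t|^\alpha f\|_2$ becomes $\||t|^\alpha f\|_2$. If $\||t|^\alpha f\|_2=\infty$ there is nothing to prove, so I assume it is finite. The starting chain is elementary:
$$
\int_E|\hat f(\xi)|^2\,d\xi\le m(E)\,\|\hat f\|_\infty^2\le m(E)\,\|f\|_1^2 ,
$$
where the second bound is the trivial estimate $|\hat f(\xi)|\le\int_{\R^N}|f|$ coming from the definition of the Fourier transform. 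Thus everything reduces to a sharp Carlson-type inequality bounding $\|f\|_1$.

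The core step is the inequality
$$
\|f\|_1^2\le K'\,\|f\|_2^{2-\frac N\alpha}\,\||t|^\alpha f\|_2^{\frac N\alpha},
$$
which I would obtain from a weighted Cauchy--Schwarz estimate depending on a scaling parameter $s>0$. Writing $|f|=(s^{2\alpha}+|x|^{2\alpha})^{1/2}|f|\cdot(s^{2\alpha}+|x|^{2\alpha})^{-1/2}$ and applying Cauchy--Schwarz gives
$$
\|f\|_1^2\le\left(\int_{\R^N}\frac{dx}{s^{2\alpha}+|x|^{2\alpha}}\right)\left(s^{2\alpha}\|f\|_2^2+\||x|^\alpha f\|_2^2\right).
$$
This already shows $f\in L^1$ (the right-hand side is finite precisely because $\alpha>N/2$), so that $\hat f$ is continuous and bounded and the previous chain is legitimate.

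Next I would evaluate the first integral. The substitution $x=sy$ gives $\int(s^{2\alpha}+|x|^{2\alpha})^{-1}dx=s^{N-2\alpha}I$ with $I=\int_{\R^N}(1+|y|^{2\alpha})^{-1}dy$, and passing to polar coordinates,
$$
I=\frac{2\pi^{N/2}}{\Gamma(N/2)}\int_0^\infty\frac{r^{N-1}}{1+r^{2\alpha}}\,dr=\frac{\pi^{N/2}}{\alpha\,\Gamma(N/2)}\,\Gamma\!\left(\frac{N}{2\alpha}\right)\Gamma\!\left(1-\frac{N}{2\alpha}\right),
$$
the last equality being a standard beta-function computation together with the reflection formula $\Gamma(s)\Gamma(1-s)=\pi/\sin(\pi s)$ (the radial integral converges exactly when $\alpha>N/2$). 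It then remains to minimise $s^{N-2\alpha}(s^{2\alpha}\|f\|_2^2+\||x|^\alpha f\|_2^2)$ over $s>0$; the optimal $s$ satisfies $s^{2\alpha}=\frac{2\alpha-N}{N}\,\||x|^\alpha f\|_2^2/\|f\|_2^2$, and substituting back produces exactly the homogeneous expression $\|f\|_2^{2-N/\alpha}\||x|^\alpha f\|_2^{N/\alpha}$ with constant $I\cdot\frac{2\alpha}{N}\big(\frac{2\alpha}{N}-1\big)^{-(1-N/2\alpha)}$, which rearranges into the claimed $K'$. I expect this explicit evaluation of $I$ and the optimisation of the constant to be the main bookkeeping obstacle.

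Finally, for strictness and optimality: equality in the Cauchy--Schwarz step forces $|f(x)|=c\,(s^{2\alpha}+|x|^{2\alpha})^{-1}$ for the optimal $s$, while equality in $\int_E|\hat f|^2\le m(E)\|\hat f\|_\infty^2$ forces $|\hat f|$ to be a.e.\ equal to its supremum on $E$; since $f\in L^1$ makes $\hat f$ continuous and non-constant, these cannot hold simultaneously when $m(E)>0$, so the inequality is strict. To see that $K'$ cannot be improved I would test the bound on the extremal profile $f_0(x)=(s^{2\alpha}+|x|^{2\alpha})^{-1}$ (for which Cauchy--Schwarz is an equality and, being non-negative, $\|\widehat{f_0}\|_\infty=\widehat{f_0}(0)=\|f_0\|_1$) and let $E$ shrink to a small ball about the origin, so that the ratio in \eqref{1stim} tends to $K'$. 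The inequality \eqref{2stim} then follows by running the same argument with $\hat f$ in place of $f$ and using Fourier inversion to pass back to $f$.
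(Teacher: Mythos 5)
Your main line of argument is correct, and it is in substance the route the paper itself points to: the paper does not reprove Theorem \ref{price} (it quotes it from \cite{P}), but it isolates as Proposition \ref{sprice} the key tool of Price's proof, which is exactly the Carlson-type inequality $\|f\|_1^2\leq K'\|f\|_2^{2-N/\alpha}\||t-\overline{t}|^\alpha f\|_2^{N/\alpha}$ that you establish via the parametrized Cauchy--Schwarz estimate. Your bookkeeping checks out: the beta-integral evaluation of $I$, the optimizer $s^{2\alpha}=\frac{2\alpha-N}{N}\||x|^\alpha f\|_2^2/\|f\|_2^2$, and the rearrangement of $I\cdot c^{N/(2\alpha)}(1+1/c)$ with $c=\frac{2\alpha}{N}-1$ into the constant \eqref{cost} are all correct. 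The optimality argument is also sound: for $f_0=(s^{2\alpha}+|x|^{2\alpha})^{-1}$ a beta-function identity gives $\||x|^\alpha f_0\|_2^2/\|f_0\|_2^2=\frac{N}{2\alpha-N}\,s^{2\alpha}$, so your parameter $s$ is self-consistently the optimal one, Cauchy--Schwarz is an equality for $f_0$, and shrinking $E$ to a ball about the origin drives the ratio in \eqref{1stim} to $K'$.

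The one genuine gap is the strictness claim. You assert that equality in $\int_E|\hat f|^2\leq m(E)\|\hat f\|_\infty^2$ cannot coexist with the Cauchy--Schwarz equality condition because ``$\hat f$ is continuous and non-constant''. That implication is false in general: a continuous, non-constant Fourier transform of an $L^1$ function can equal its supremum on a set of positive measure (take $\hat f=\phi$ with $\phi\in C_c^\infty$, $\phi\equiv 1$ on a ball; the inverse transform $f$ is integrable by its rapid decay). Moreover, your argument uses only the equality conditions of the first and third inequalities in your chain and ignores the middle one, $\|\hat f\|_\infty\leq\|f\|_1$; since those two conditions leave the phase of $f$ completely unconstrained, it is not clear they alone are contradictory --- and they need not be. The repair is to use all three: if the whole chain were an equality with $m(E)>0$, then equality in $\|\hat f\|_\infty\leq\|f\|_1$ forces $f(x)=e^{i\theta}e^{ix\xi_0}|f(x)|$ for some $\xi_0$ (the supremum of $|\hat f|$ is attained because $\hat f$ is continuous and vanishes at infinity), while the Cauchy--Schwarz equality forces $|f|=c\,(s^{2\alpha}+|x|^{2\alpha})^{-1}>0$; then $|\hat f(\xi)|=c\,|\hat g(\xi-\xi_0)|$ with $g>0$ integrable, so $|\hat f|$ attains its maximum \emph{only} at $\xi=\xi_0$, contradicting $|\hat f|=\|\hat f\|_\infty$ a.e.\ on a set of positive measure. (For $m(E)=0$ both sides of \eqref{1stim} vanish, so the strict inequality implicitly presupposes $m(E)>0$; that is an artifact of the statement, not of your proof.)

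A final remark on \eqref{2stim}: with the paper's convention $\hat f(\xi)=\int e^{-ix\xi}f(x)\,dx$, your duality step uses $\|f\|_\infty\leq(2\pi)^{-N}\|\hat f\|_1$, hence actually yields \eqref{2stim} with the smaller constant $(2\pi)^{-2N}K'$. This certainly proves the stated inequality, but it also reveals that under this normalization $K'$ is sharp only in \eqref{1stim}; the symmetric sharpness asserted in the theorem pertains to the Fourier normalization used in \cite{P}.
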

The word ``local'' is here referred to the fact that (looking for instance at the inequality \eqref{1stim}) the $L^2$ contents of $\hat{f}$ in a measurable set $E$ must be small if the measure of $E$ is small and/or $f$ is ``concentrated'' (where concentration is measured by the dispersion term $\| |t-\overline{t}|^\alpha f\|_2$). This can be seen as a sort of refinement of the classical Heisenberg uncertainty principle; indeed, the latter says that if $f$ is concentrated, then $\hat{f}$ must be spread out, in the sense that it must have a large variance. This does not exclude that $\hat{f}$ is concentrated in very small sets sufficiently far away from each other; the local uncertainty principle shows that this last possibility is not allowed, by saying that $\hat{f}$ cannot be too concentrated in an arbitrary small set $E$.

The main tool used in \cite{P} to prove Theorem \ref{price} is the next proposition, that we shall use in the following for time-frequency representations.
\begin{prop}\label{sprice}
	For every $\alpha>\frac{N}{2}$, $f\in L^2(\R^N)$ and $\overline{t}\in\R^N$ we have
	\begin{equation*}
	\|f\|_1\leq \sqrt{K'}\|f\|^{1-\frac{N}{2\alpha}}_2\||t-\overline{t}|^\alpha f\|_2^{\frac{N}{2\alpha}},
	\end{equation*}
	where $K'$ is given by \eqref{cost}. In particular, if $f\in L^2(\R^N)$ and $\||t-\overline{t}|^\alpha f\|_2<\infty$ for some $\alpha>\frac{N}{2}$ and $\overline{t}\in\R^N$, then $f\in L^1(\R^N)$.
\end{prop}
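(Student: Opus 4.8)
The plan is to prove the inequality by a single weighted Cauchy--Schwarz estimate followed by an optimization over a scale parameter, which is exactly the route that produces the sharp constant $K'$. First I would reduce to the case $\overline t=0$: replacing $f$ by the translate $f(\cdot+\overline t)$ leaves $\|f\|_1$ and $\|f\|_2$ unchanged and turns $\||t-\overline t|^\alpha f\|_2$ into $\||t|^\alpha f\|_2$, so no generality is lost. I may also assume $0<\||t|^\alpha f\|_2<\infty$ and $f\neq 0$, since otherwise the right-hand side is either $+\infty$ or $0$ and the inequality is trivial.

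The core step is to introduce, for a parameter $c>0$, the weight $1+c|t|^{2\alpha}$ and to write
\begin{equation*}
\|f\|_1=\int_{\R^N}\frac{|f(t)|}{(1+c|t|^{2\alpha})^{1/2}}\,(1+c|t|^{2\alpha})^{1/2}\,dt\le J(c)^{1/2}\Big(\|f\|_2^2+c\,\||t|^\alpha f\|_2^2\Big)^{1/2},
\end{equation*}
where $J(c):=\int_{\R^N}(1+c|t|^{2\alpha})^{-1}\,dt$ and I have used Cauchy--Schwarz together with $\int_{\R^N}(1+c|t|^{2\alpha})|f|^2\,dt=\|f\|_2^2+c\||t|^\alpha f\|_2^2$. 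Passing to polar coordinates and substituting $u=c|t|^{2\alpha}$ reduces $J(c)$ to a multiple of the Beta-type integral $\int_0^\infty u^{N/(2\alpha)-1}(1+u)^{-1}\,du$, which converges exactly because $\alpha>\tfrac N2$ forces the exponent $s:=\tfrac{N}{2\alpha}$ into $(0,1)$; by the reflection formula it equals $\Gamma(s)\Gamma(1-s)$. Combined with the surface measure $\tfrac{2\pi^{N/2}}{\Gamma(N/2)}$ of the unit sphere, this yields the closed form $J(c)=\tfrac{\pi^{N/2}}{\alpha\,\Gamma(N/2)}\Gamma\!\big(\tfrac N{2\alpha}\big)\Gamma\!\big(1-\tfrac N{2\alpha}\big)\,c^{-N/(2\alpha)}$. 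This is precisely the place where the hypothesis $\alpha>\tfrac N2$ is indispensable, as it is what makes $J(c)$ finite at infinity.

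It then remains to minimize the bound over $c>0$. Writing $A=\|f\|_2^2$, $B=\||t|^\alpha f\|_2^2$ and $\beta=\tfrac N{2\alpha}\in(0,1)$, the quantity $c^{-\beta}(A+cB)$ is minimized at $c_*=\tfrac{\beta}{1-\beta}\,\tfrac AB=\tfrac{N}{2\alpha-N}\,\tfrac AB$, with minimal value $A^{1-\beta}B^{\beta}\big(\tfrac{2\alpha}N-1\big)^{\beta}\big(1-\beta\big)^{-1}$. Substituting this into the estimate and taking square roots produces exactly
\begin{equation*}
\|f\|_1\le \sqrt{K'}\,\|f\|_2^{1-\frac N{2\alpha}}\,\||t|^\alpha f\|_2^{\frac N{2\alpha}},
\end{equation*}
once one recognizes that the accumulated constant $\tfrac{\pi^{N/2}}{\alpha\,\Gamma(N/2)}\Gamma(\tfrac N{2\alpha})\Gamma(1-\tfrac N{2\alpha})\big(\tfrac{2\alpha}N-1\big)^{N/(2\alpha)}\big(1-\tfrac N{2\alpha}\big)^{-1}$ coincides with $K'$ as given in \eqref{cost}. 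I expect the main obstacle to be this final bookkeeping: matching the product of the sphere constant, the Gamma reflection factor, and the optimization factor $\big(\tfrac{2\alpha}N-1\big)^{N/(2\alpha)}(1-\tfrac N{2\alpha})^{-1}$ to the explicit formula for $K'$, since every constant must be tracked faithfully through the polar change of variables and the minimization in $c$. The final assertion is then immediate: if $f\in L^2(\R^N)$ and $\||t-\overline t|^\alpha f\|_2<\infty$ for some admissible $\alpha$ and $\overline t$, the right-hand side is finite, whence $f\in L^1(\R^N)$.
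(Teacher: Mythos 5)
Your proof is correct: the weighted Cauchy--Schwarz estimate with weight $1+c|t|^{2\alpha}$, the evaluation of $J(c)$ via polar coordinates and the reflection integral $\int_0^\infty u^{s-1}(1+u)^{-1}\,du=\Gamma(s)\Gamma(1-s)$ with $s=\frac{N}{2\alpha}\in(0,1)$, and the minimization over $c$ (at $c_*=\frac{N}{2\alpha-N}\,\frac{\|f\|_2^2}{\||t|^\alpha f\|_2^2}$) assemble exactly into the constant $K'$ of \eqref{cost}, and your reductions ($\overline{t}=0$, $f\neq 0$, $0<\||t|^\alpha f\|_2<\infty$) dispose of all degenerate cases. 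Note that the paper itself gives no proof of this proposition---it is quoted as a tool from Price \cite{P}---and your argument is precisely the classical sharp-constant proof from that reference, so the approaches coincide.
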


In the following we prove uncertainty principles of local type involving different time-frequency representations, where we use weighted norms and a corresponding weighted measure of the set $E$. We start by proving the following result. As in the previous section, $K$ and $b$ always indicate the constants of conditions $(\alpha)$ and $(\gamma)$ of Definition \ref{D.weight}, respectively.
\begin{prop}\label{upfconvg}
	Let $\omega$ be a weight function and fix $g\in \mathcal{S}_{\omega}(\mathbb{R}^N)$, $g\neq0$, $\overline{z}\in\R^{2N}$, and $\alpha>N$. Let moreover $E\subset\R^N$ be a Lebesgue measurable set.
	\begin{itemize}
		\item [(i)] For every $\lambda,\mu\geq0$, $\mu'>\frac{N}{b}$ there exists $C_{\lambda,\mu,\mu'}>0$ such that for every $f\in\cS_\omega(\R^N)$
		\begin{align*}
			\int_{E} e^{2\lambda\omega(\xi)}|\hat{f}(\xi)|^2\, d\xi\leq {C_{\lambda,\mu,\mu'}}D_\mu(E) \|e^{\lambda'\omega}Sp_g f\|_{2}^{1-\frac{N}{\alpha}}\||z-\overline{z}|^\alpha e^{\lambda'\omega}Sp_g f\|_{2}^{\frac{N}{\alpha}},
		\end{align*}
		where
		$$
		\lambda':=2[K(\lambda+\mu)+\mu'],\quad D_\mu(E):=\int_{E}e^{-2\mu\omega(\xi)}\, d\xi.
		$$
		\item [(ii)] For every $\lambda,\mu\geq0$, $\mu'>\frac{N}{b}$ there exists $C_{\lambda,\mu,\mu'}>0$ such that for every $f\in\cS_\omega(\R^N)$
		\begin{align*}
			\int_{E} e^{2\lambda\omega(x)}|f(x)|^2\, dx \leq {C_{\lambda,\mu,\mu'}}D_\mu(E) \|e^{\lambda'\omega}Sp_g f\|_{2}^{1-\frac{N}{\alpha}}\||z-\overline{z}|^\alpha e^{\lambda'\omega}Sp_g f\|_{2}^{\frac{N}{\alpha}},
		\end{align*}
		where $\lambda'$ and $D_\mu(E)$ are as in point (i).
	\end{itemize}
\end{prop}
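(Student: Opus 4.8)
The plan is to recognize that the right-hand side of each of the two estimates is exactly the quantity produced by Proposition \ref{sprice} (Price's scalar inequality) when it is applied, \emph{on $\R^{2N}$}, to the single function $e^{\lambda'\omega}Sp_gf$. Since the ambient dimension is $2N$, the admissibility threshold in Proposition \ref{sprice} is $\alpha>\frac{2N}{2}=N$, which is precisely the standing hypothesis $\alpha>N$ (and it also matches the exponents $1-\frac{N}{\alpha}$ and $\frac{N}{\alpha}$). Hence it suffices to dominate each left-hand side by a constant multiple of $D_\mu(E)\,\|e^{\lambda'\omega}Sp_gf\|_1$ and then quote Proposition \ref{sprice}.

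For (i), I would first split the weight as $e^{2\lambda\omega(\xi)}=e^{-2\mu\omega(\xi)}e^{2(\lambda+\mu)\omega(\xi)}$, pull out the weighted sup-norm, and recognize the weighted measure:
\[
\int_E e^{2\lambda\omega(\xi)}|\hat f(\xi)|^2\,d\xi\leq \|e^{(\lambda+\mu)\omega}\hat f\|_\infty^2\int_E e^{-2\mu\omega(\xi)}\,d\xi=\|e^{(\lambda+\mu)\omega}\hat f\|_\infty^2\,D_\mu(E).
\]
Next I would bound the sup-norm by the inversion-formula estimate \eqref{NV-Sp6} with $\lambda$ replaced by $\lambda+\mu$, giving $\|e^{(\lambda+\mu)\omega}\hat f\|_\infty\leq C(g)\,\|e^{K(\lambda+\mu)\omega}V_gf\|_1$ with $C(g)=\frac{e^{K(\lambda+\mu)}\|e^{K(\lambda+\mu)\omega}\hat g\|_\infty}{(2\pi)^N\|g\|_2^2}$. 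Finally I would convert this weighted $L^1$-norm of the STFT into a weighted $L^1$-norm of the spectrogram: writing $\nu:=K(\lambda+\mu)$ and applying Cauchy--Schwarz,
\[
\|e^{\nu\omega}V_gf\|_1=\int_{\R^{2N}}e^{-\mu'\omega}\,e^{(\nu+\mu')\omega}|V_gf|\,dz\leq \|e^{-\mu'\omega}\|_{2}\,\|e^{(\nu+\mu')\omega}V_gf\|_{2},
\]
and since $Sp_gf=|V_gf|^2$ and $2(\nu+\mu')=\lambda'$, this yields $\|e^{(\nu+\mu')\omega}V_gf\|_2^2=\|e^{\lambda'\omega}Sp_gf\|_1$. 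The factor $\|e^{-\mu'\omega}\|_{L^2(\R^{2N})}$ is finite exactly when $\mu'>\frac{N}{b}$, by \eqref{eq.Lpspazi} with $p=2$ in dimension $2N$, which is the stated assumption. Collecting constants and applying Proposition \ref{sprice} to $e^{\lambda'\omega}Sp_gf$ then gives the claim, with $C_{\lambda,\mu,\mu'}$ equal to $\sqrt{K'}$ times $C(g)^2$ times $\|e^{-\mu'\omega}\|_2^2$.

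Part (ii) is entirely parallel: the only change is that I would bound $\|e^{(\lambda+\mu)\omega}f\|_\infty$ by $\|e^{\nu\omega}V_gf\|_1$ through \eqref{NV-Sp5} instead of \eqref{NV-Sp6}, after which the Cauchy--Schwarz step and the invocation of Proposition \ref{sprice} are identical. All norms involved are finite because $f\in\cS_\omega(\R^N)$ forces $V_gf\in\cS_\omega(\R^{2N})$ (Theorem \ref{equiv vgf}), so in particular $|z-\overline z|^\alpha e^{\lambda'\omega}Sp_gf\in L^2(\R^{2N})$ and the right-hand side of Proposition \ref{sprice} is finite. I expect the only genuinely delicate point to be the bookkeeping of the exponents: one must track the factor $K$ introduced by \eqref{sub} through \eqref{NV-Sp6}/\eqref{NV-Sp5}, together with the extra $\mu'$ created by Cauchy--Schwarz, so that they combine to exactly $\lambda'=2[K(\lambda+\mu)+\mu']$, while simultaneously ensuring that the integrability threshold forced on $\mu'$ is precisely $\mu'>\frac{N}{b}$ coming from \eqref{eq.Lpspazi}.
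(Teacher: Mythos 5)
Your proposal is correct, and the exponent bookkeeping works out exactly as you describe. For part (i) your argument coincides with the paper's: the splitting $e^{2\lambda\omega}=e^{-2\mu\omega}e^{2(\lambda+\mu)\omega}$, the bound \eqref{NV-Sp6} with $\lambda+\mu$ in place of $\lambda$, and then your Cauchy--Schwarz step (which is precisely \eqref{NV-Sp7} with $p=1$, where $(2p)'=2$ forces $\mu'>\frac{N}{b}$), followed by Proposition \ref{sprice} on $\R^{2N}$ applied to $e^{\lambda'\omega}Sp_gf$. For part (ii) you take a genuinely different, and in fact more direct, route: the paper does not redo the estimate but instead writes $\int_E e^{2\lambda\omega(x)}|f(x)|^2\,dx=(2\pi)^{-2N}\int_E e^{2\lambda\omega(x)}|\hat{\hat{f}}(x)|^2\,dx$, applies part (i) to $\hat{f}$ with window $\hat{g}$, and then converts $Sp_{\hat{g}}\hat{f}$ back into $Sp_gf$ via the fundamental identity \eqref{FundGabor}, i.e.\ $(2\pi)^{-2N}Sp_{\hat{g}}\hat{f}(x,\xi)=Sp_gf(-\xi,x)$, using that $\overline{z}$ is arbitrary and that the weights are rotation-invariant. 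Your parallel argument through \eqref{NV-Sp5} avoids this reduction entirely, so you never need the coordinate rotation $(x,\xi)\mapsto(-\xi,x)$ or the $(2\pi)$-power bookkeeping, at the cost of repeating the three-step estimate; the paper's version buys the formal economy of deducing (ii) as a corollary of (i), and the two yield the same statement with slightly different constants ($\|e^{K(\lambda+\mu)\omega}g\|_\infty$ in yours versus $\|e^{K(\lambda+\mu)\omega}\hat{g}\|_\infty$ in the paper's), which is immaterial since the constant is not specified. One trivial imprecision: \eqref{eq.Lpspazi} gives that $\mu'>\frac{N}{b}$ is \emph{sufficient} for $\|e^{-\mu'\omega}\|_{L^2(\R^{2N})}<\infty$, not that it is necessary, so ``exactly when'' should be ``whenever''; this does not affect the proof.
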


\begin{proof}
	(i) Fixed $f\in\cS_\omega(\R^N)$, $\lambda,\mu\geq0$ and $\overline{z}\in\R^{2N}$ we have
	\begin{align*}
	    \int_{E} e^{2\lambda\omega(\xi)}|\hat{f}(\xi)|^2\, d\xi=\int_{E} e^{2(\lambda+\mu-\mu)\omega(\xi)}|\hat{f}(\xi)|^2\, d\xi
		\leq D_\mu(E)\left\|e^{(\lambda+\mu)\omega}\hat{f}\right\|^2_\infty,
	\end{align*}
	with $D_\mu(E):=\int_{E}e^{-2\mu\omega(\xi)}\, d\xi$. So, by applying \eqref{NV-Sp6} and \eqref{NV-Sp7} with $p=1$, we have
	\begin{align}\label{local-eq1}
	    \int_{E} e^{2\lambda\omega(\xi)}|\hat{f}(\xi)|^2\, d\xi\leq D_\mu(E) {C'_{\lambda,\mu,\mu'}} \|e^{2[K(\lambda+\mu)+\mu']\omega} Sp_g f\|_1,
	\end{align}
	where
	$$
	C'_{\lambda,\mu,\mu'}:=\left(\frac{e^{K(\lambda+\mu)}\|e^{K(\lambda+\mu)\omega}\hat{g}\|_\infty}{(2\pi)^N \|g\|_2^2}\right)^2 \|e^{-\mu'\omega}\|^2_2,
	$$
	with $\mu' >\frac{N}{b}$. Therefore, using Proposition \ref{sprice} with $\alpha>N$ applied to the function $h:= e^{2[K(\lambda+\mu)+\mu']\omega} Sp_g f$, we obtain that
	\begin{align*}
	    \int_{E} e^{2\lambda\omega(\xi)}|\hat{f}(\xi)|^2\, d\xi \leq {C'_{\lambda,\mu,\mu'}}D_\mu(E) \sqrt{K'} \|e^{\lambda'\omega}Sp_g f\|_{2}^{1-\frac{N}{\alpha}}\||z-\overline{z}|^\alpha e^{\lambda'\omega}Sp_g f\|_{2}^{\frac{N}{\alpha}}&,
	\end{align*}
	with $\lambda':= 2[K(\lambda+\mu)+\mu']$. Setting $C_{\lambda,\mu,\mu'}:=C'_{\lambda,\mu,\mu'} \sqrt{K'} $, we get the thesis.
	
	(ii) Fixed $f\in\cS_\omega(\R^N)$, $\lambda,\mu\geq0$ and $\overline{z}\in\R^{2N}$, applying the same calculation in point (i), we have
\begin{align*}
	    \int_{E} e^{2\lambda\omega(x)} |f(x)|^2 \, dx&= (2\pi)^{-2N} \int_{E} e^{2\lambda\omega(x)} |\hat{\hat{f}}(x)|^2\, dx \\ & \leq (2\pi)^{-2N} {C'_{\lambda,\mu,\mu'}}D_\mu(E) \sqrt{K'} \|e^{\lambda' \omega}Sp_{\hat{g}} \hat{f} \|_{2}^{1-\frac{N}{\alpha}} \||z-\overline{z}|^\alpha e^{\lambda' \omega}Sp_{\hat{g}} \hat{f} \|_{2}^{\frac{N}{\alpha}},
\end{align*}
where $D_\mu(E)$ and $\lambda',\mu'$ are as in point (i).

Now observe that, from \eqref{FundGabor}, $$(2\pi)^{-2N}Sp_{\hat{g}} \hat{f}(x,\xi)=(2\pi)^{-2N}|V_{\hat{g}} \hat{f}(x,\xi)|^2=|V_g f(-\xi,x)|^2=Sp_g f(-\xi,x);$$
so, since $\overline{z}$ is arbitrary, we get the thesis.
\end{proof}

\begin{rem}
Observe that the constant $D_\mu(E)$ in Proposition \ref{upfconvg} does not need to be finite for every measurable set $E$ and constant $\mu\geq0$. Of course, $D_\mu(E)<\infty$ for every $\mu\geq0$ if $m(E)<\infty$, and $D_0(E)$ is the Lebesgue measure of $E$. The quantity $D_\mu(E)$ may be finite also for $E$ with infinite Lebesgue measure; for instance, $D_\mu(E)<\infty$ for every $E$ if $\mu > \frac{N}{2b}$ (see \eqref{eq.Lpspazi}). Similar observations apply for the next results, where other quantities (playing the role $D_\mu(E)$) appear, representing different kind of weighted measures of the set $E$.
\end{rem}

Now we prove local uncertainty principles for the spectrogram $Sp_g f$ of two functions $f,g\in\cS_{\omega}(\R^N)$.

\begin{prop}\label{upsptpricecompleto}
	Let $\omega$ be a weight function and fix $g\in \mathcal{S}_{\omega}(\mathbb{R}^N)$, $g\neq0$, $\overline{z}=(\overline{x},\overline{\xi})\in\R^{2N}$ and $\alpha>\frac{N}{2}$. Let moreover $E\subset\R^{2N}$ be a Lebesgue measurable set. 
	\begin{itemize}
	\item [(i)] For every $\lambda\geq 0$ there exists $C_\lambda>0$ such that for every $\mu\geq 0$ and $f\in\cS_\omega(\R^N)$
	\begin{align*}
&	\int_{E}e^{\lambda\omega(x,\xi)}|Sp_gf(x,\xi)|dxd\xi\leq\\&\;\leq C_\lambda D'_\mu(E)  \|e^{\lambda'\omega}f\|_2^{1-\frac{N}{2\alpha}}\|e^{\lambda'\omega}|x-\overline{x}|^\alpha f\|_2^{\frac{N}{2\alpha}} \|e^{\lambda'\omega}\hat{f}\|_2^{1-\frac{N}{2\alpha}}\|e^{\lambda'\omega}|\xi-\overline{\xi}|^\alpha\hat{f}\|_2^{\frac{N}{2\alpha}},
	\end{align*}
	where
	$$
	\lambda':= K^2(\lambda+\mu),\quad D'_\mu(E):=\int_{E}e^{-\mu\omega(x,\xi)}dx d\xi.
	$$
		\item [(ii)] For every $\lambda\geq0$ there exists $C_\lambda>0$ such that for every $\mu\geq 0$ and $f\in\cS_\omega(\R^N)$
	\begin{align}
		\int_{E}e^{\lambda\omega(x,\xi)}|Sp_g f(x,\xi)| dxd\xi\leq C_\lambda M_\mu(E)  \|e^{\lambda'\omega}f\|_2^{2-\frac{N}{\alpha}}\|e^{\lambda'\omega}|x-\overline{x}|^\alpha f\|_2^{\frac{N}{\alpha}}\label{upvgfconf}
	\end{align}
	where
	$$
	\lambda':= \frac{K}{2}(K\lambda+\mu),\quad M_\mu(E):=\int_{E}e^{K\lambda\omega(\xi)-\mu\omega(x)}dxd\xi.
	$$
\item [(iii)] For every $\lambda\geq0$ there exists $C'_\lambda>0$ such that for every $\mu\geq 0$ and $f\in\cS_\omega(\R^N)$
	\begin{align}
		\int_{E}e^{\lambda\omega(x,\xi)}|Sp_gf(x,\xi)|dxd\xi\leq C'_\lambda M'_\mu(E) \|e^{\lambda'\omega}\hat{f} \|_2^{2-\frac{N}{\alpha}}\|e^{\lambda'\omega}|\xi-\overline{\xi}|^\alpha\hat{f}\|_2^{\frac{N}{\alpha}} \label{upvgfconfcap},
	\end{align}
	where
	$$
	\lambda':=\frac{K}{2}(K\lambda+\mu),\quad M'_\mu(E):=\int_{E}e^{K\lambda\omega(x)-\mu\omega(\xi)}dxd\xi.
	$$
	\end{itemize}
\end{prop}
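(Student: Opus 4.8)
The plan is to reduce each inequality to a \emph{weighted $L^1$-bound} on $f$ or on $\hat f$ over $\R^N$, and then to convert that $L^1$-norm into the stated product of an $L^2$-norm and a dispersion term via the Price inequality of Proposition \ref{sprice} (used with $\alpha>\frac N2$, i.e. in dimension $N$). This is precisely what distinguishes the statement from Proposition \ref{upfconvg}: there Proposition \ref{sprice} was applied to the spectrogram as a function on $\R^{2N}$ (whence $\alpha>N$ and a single joint dispersion in $z$), whereas here it is applied separately to $f$ and to $\hat f$ on $\R^N$, which explains the two base points $\overline x,\overline\xi$ and the threshold $\alpha>\frac N2$. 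Throughout I write $Sp_gf=|V_gf|^2$ and use the endpoint ($p=1$, $p'=\infty$) forms of \eqref{NV-Sp1} and \eqref{NV-Sp2}, namely $e^{\nu\omega(x)}|V_gf(x,\xi)|\le e^{K\nu}\|e^{K\nu\omega}f\|_1\|e^{K\nu\omega}g\|_\infty$ and $e^{\nu\omega(\xi)}|V_gf(x,\xi)|\le (2\pi)^{-N}e^{K\nu}\|e^{K\nu\omega}\hat f\|_1\|e^{K\nu\omega}\hat g\|_\infty$; the decisive feature is that the first right-hand side does not depend on $\xi$ and the second does not depend on $x$.

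For (i) I would insert $1=e^{-\mu\omega(x,\xi)}e^{\mu\omega(x,\xi)}$ under the integral to produce the weighted measure exactly:
\[
\int_E e^{\lambda\omega(x,\xi)}|Sp_gf(x,\xi)|\,dxd\xi\le D'_\mu(E)\,\sup_{z\in\R^{2N}}e^{(\lambda+\mu)\omega(z)}|V_gf(z)|^2 .
\]
To the supremum I apply \eqref{sub} to split $e^{(\lambda+\mu)\omega(x,\xi)}\le e^{K(\lambda+\mu)}e^{K(\lambda+\mu)\omega(x)}e^{K(\lambda+\mu)\omega(\xi)}$ and bound the two factors of $|V_gf|$ by the $x$- and the $\xi$-estimate above with $\nu=K(\lambda+\mu)$. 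This produces $\|e^{\lambda'\omega}f\|_1\,\|e^{\lambda'\omega}\hat f\|_1$ with exactly $\lambda'=K^2(\lambda+\mu)$, times the (finite) window factors $\|e^{\lambda'\omega}g\|_\infty\|e^{\lambda'\omega}\hat g\|_\infty$ and an explicit exponential. Proposition \ref{sprice}, applied to $e^{\lambda'\omega}f$ at $\overline x$ and to $e^{\lambda'\omega}\hat f$ at $\overline\xi$, then replaces each $L^1$-norm by its $L^2$/dispersion product and yields the asserted bound.

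Parts (ii) and (iii) run along the same three steps but with an \emph{asymmetric} splitting, which is what creates the mixed-sign measures $M_\mu,M'_\mu$. For (ii) I would write $e^{\lambda\omega(x,\xi)}\le e^{K\lambda}e^{K\lambda\omega(x)}e^{K\lambda\omega(\xi)}$ and then $e^{K\lambda\omega(x)}=e^{-\mu\omega(x)}e^{(K\lambda+\mu)\omega(x)}$; the factor $e^{K\lambda\omega(\xi)-\mu\omega(x)}$ is exactly the integrand of $M_\mu(E)$, while the remaining $e^{(K\lambda+\mu)\omega(x)}|V_gf|^2=\big(e^{\frac12(K\lambda+\mu)\omega(x)}|V_gf|\big)^2$ is controlled by the $x$-estimate alone with $\nu=\frac12(K\lambda+\mu)$. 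Since that estimate is uniform in $\xi$, no $\hat f$-term is generated and one lands on $\|e^{\lambda'\omega}f\|_1^2$ with $\lambda'=\frac K2(K\lambda+\mu)$, to which Proposition \ref{sprice} (base point $\overline x$) applies. Part (iii) is the mirror image: one peels off $e^{-\mu\omega(\xi)}$, keeps $e^{K\lambda\omega(x)-\mu\omega(\xi)}$ as $M'_\mu(E)$, and uses the $\xi$-estimate \eqref{NV-Sp2} (which rests on the fundamental identity \eqref{FundGabor}), uniform in $x$, to reach $\|e^{\lambda'\omega}\hat f\|_1^2$ and then Proposition \ref{sprice} at $\overline\xi$.

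The exponential bookkeeping is routine; the step I would watch most carefully is the asymmetric splitting in (ii)--(iii), where the half-weight $e^{K\lambda\omega(\xi)}$ (resp. $e^{K\lambda\omega(x)}$) must be kept inside the measure rather than transferred to $V_gf$, so as not to manufacture a spurious $\hat f$- (resp. $f$-) dispersion and to hit the exact exponent $\lambda'=\frac K2(K\lambda+\mu)$. The other point deserving attention is the uniformity of the constant in $\mu$: the window factors $\|e^{\lambda'\omega}g\|_\infty,\|e^{\lambda'\omega}\hat g\|_\infty$ are finite for each $\mu$ because $g\in\mathcal{S}_\omega(\R^N)$, and one must check that, together with the explicit exponentials, they are absorbed into a constant $C_\lambda$ (resp. $C'_\lambda$) depending only on $\lambda$ and the fixed window $g$.
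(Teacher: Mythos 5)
Your proposal is correct and follows essentially the same route as the paper: part (i) is the paper's application of \eqref{NV-Sp3} with $p=1$ (exactly your combination of \eqref{NV-Sp1} and \eqref{NV-Sp2} after splitting the weight via \eqref{sub}) followed by Proposition \ref{sprice} applied to $e^{\lambda'\omega}f$ and $e^{\lambda'\omega}\hat f$, while parts (ii)--(iii) use precisely the paper's asymmetric splitting that keeps $e^{K\lambda\omega(\xi)-\mu\omega(x)}$ (resp.\ its mirror) inside the integral as $M_\mu(E)$ (resp.\ $M'_\mu(E)$) and controls the remaining factor by the one-sided estimate alone before invoking Proposition \ref{sprice} once. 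The $\mu$-uniformity point you flag at the end is a genuine subtlety, but it is shared by the paper's own proof, whose constants likewise contain window factors evaluated at $\lambda'=K^2(\lambda+\mu)$ (resp.\ $\tfrac{K}{2}(K\lambda+\mu)$) and hence, as written, depend on $\mu$ as well.
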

\begin{proof}
	(i) Fixed $f\in \cS_\omega(\R^N)$, $\overline{z}\in\R^{2N}$ and $\lambda,\mu\geq0$, applying  \eqref{NV-Sp3} with $p=1$ we have
	\begin{align*}
	&\int_{E}e^{\lambda\omega(x,\xi)}|Sp_g f(x,\xi)|dxd\xi\leq D'_\mu(E)  \|e^{(\lambda+\mu)\omega}Sp_g f\|_\infty \\& \; \leq  D'_\mu(E) \frac{e^{K(\lambda+\mu)}e^{2K^2(\lambda+\mu)}}{(2\pi)^N}\|e^{K^2(\lambda+\mu)\omega}f\|_1\|\|e^{K^2(\lambda+\mu)\omega}g\|_{\infty} \|e^{K^2(\lambda+\mu)\omega}\hat{f}\|_1\|e^{K^2(\lambda+\mu)\omega}\hat{g}\|_{\infty},
	\end{align*}
	where $D'_\mu(E):=\int_{E}e^{-\mu\omega(x,\xi)}dx d\xi$. Setting $$C'_\lambda:=\frac{e^{K(\lambda+\mu)}e^{2K^2(\lambda+\mu)}}{(2\pi)^N} \|e^{K^2(\lambda+\mu)\omega}g\|_{\infty} \|e^{K^2(\lambda+\mu)\omega}\hat{g}\|_{\infty}$$ and applying Proposition \ref{sprice} to $\|e^{K^2(\lambda+\mu)\omega}f\|_1$ and $\|e^{K^2(\lambda+\mu)\omega}\hat{f}\|_1$ for $\alpha>\frac{N}{2}$, we get the thesis for $C_\lambda=C'_\lambda K'$.
	
(ii)  Fixed $f\in \cS_\omega(\R^N)$, $\overline{z}\in\R^{2N}$ and $\lambda,\mu\geq0$, applying \eqref{NV-Sp1} with $p=1$ and \eqref{sub} we have
	\begin{align*}
	    &\int_{E}e^{\lambda\omega(x,\xi)}|Sp_g f(x,\xi)|dxd\xi \leq \int_{E}e^{K\lambda(1+\omega(x)+\omega(\xi))-\mu\omega(x)+\mu\omega(x)}|Sp_g f(x,\xi)|dxd\xi\\
	    &\qquad\leq e^{K\lambda}\|e^{(K\lambda+\mu)\omega(x)}Sp_g f\|_\infty \int_{E}e^{K\lambda\omega(\xi)-\mu\omega(x)}dxd\xi\\
	    &\qquad\leq e^{K\lambda+\frac{K}{2}(K\lambda+\mu)} \|e^{\frac{K}{2}(K\lambda+\mu)\omega}f\|_1^2\|e^{\frac{K}{2}(K\lambda+\mu)\omega}g\|_\infty^2\int_{E}e^{K\lambda\omega(\xi)-\mu\omega(x)}dxd\xi.
	\end{align*}
	Setting $M_\mu(E):= \int_{E}e^{K\lambda\omega(\xi)-\mu\omega(x)}dxd\xi$, we get the thesis applying Proposition \ref{price} to $\|e^{\frac{K}{2}(K\lambda+\mu)\omega}f\|_1^2$ for $\alpha>\frac{N}{2}$.
	
(iii) The proof of \eqref{upvgfconfcap} is analogous and so is omitted. 
\end{proof}

\begin{rem}
The quantities $D_\mu(E)$, $D'_\mu(E)$, $M_\mu(E)$, $M'_\mu(E)$ in Propositions \ref{upfconvg} and \ref{upsptpricecompleto} give a measure of the ``size'' of $E$ (observe that, in particular, for $\mu=0$ they all coincide with the Lebesgue measure of $E$ when considering $\lambda=0$ in Proposition \ref{upsptpricecompleto}(ii)-(iii)); moreover, the quantities $\| |z-\overline{z}|^\alpha e^{\lambda'\omega}Sp_gf\|_2$, $\|e^{\lambda'\omega}|x-\overline{x}|^\alpha f\|_2$ and $\|e^{\lambda'\omega}|\xi-\overline{\xi}|^\alpha\hat{f}\|_2$ give (weighted) measures of the dispersions of $Sp_gf$, $f$, and $\hat{f}$, respectively. Then we can interpret the previous results as local uncertainty principles involving time-frequency representations, in the following sense: Proposition \ref{upfconvg} says that in a measurable set $E\subset\mathbb{R}^N$ the contents of $f$ and $\hat{f}$ must be as small as the size of $E$ and/or the (weighted) dispersion of the spectrogram $Sp_gf$ are small. Similarly, Proposition \ref{upsptpricecompleto} interchanges the roles of $Sp_gf$ and $f$, $\hat{f}$, saying that in a measurable set $E\subset\mathbb{R}^{2N}$ the contents of the spectrogram $Sp_gf$ must be as small as the size of $E$ and/or the (weighted) dispersion(s) of $f$ and $\hat{f}$ are small.
\end{rem}

In the following we prove other results of this kind, involving the $\tau$-Wigner transform and representations in the Cohen class. We start by analyzing the $\tau$-Wigner of a function $f \in\cS_{\omega}(\R^N)$.

\begin{prop}\label{wigprice}
	Let $\omega$ be a weight function and fix $g\in \mathcal{S}_{\omega}(\mathbb{R}^N)$, $g\neq0$, $\tau\in(0,1)$ and $\overline{z}=(\overline{x},\overline{\xi})\in\R^{2N}$.
	\begin{itemize}
	\item [(i)]  Let $E\subset\R^{N}$ be a Lebesgue measurable set, and $\alpha>N$. For every $\lambda,\mu \geq0$, $\mu'>\frac{2N}{b}$ there exists $C_{\lambda,\mu,\mu'}>0$ such that for every $f\in\cS_\omega(\R^N)$
		\begin{align*}
	\int_{E} e^{2\lambda\omega(\xi)}|\hat{f}(\xi)|^2\, d\xi\leq {C_{\lambda,\mu,\mu'}}D_\mu(E) \|e^{\lambda'\omega}\Wig_{\tau} (f)\|_{2}^{1-\frac{N}{\alpha}}\||z-\overline{z}|^\alpha e^{\lambda'\omega}\Wig_{\tau} (f)\|_{2}^{\frac{N}{\alpha}},
	\end{align*}
	where
	$$
	\lambda':=2K[K(\lambda+\mu)+\mu'],\quad D_\mu(E):=\int_{E}e^{-2\mu\omega(\xi)}\, d\xi.
	$$
   	\item [(ii)] Let $E\subset\R^{2N}$ be a Lebesgue measurable set, and $\alpha>\frac{N}{2}$. For every $\lambda,\mu \geq0$, $\mu'>\frac{2N}{b}(K^2+2)$ there exists $C_{\lambda,\mu,\mu'}>0$ such that for every $f\in\cS_\omega(\R^N)$
	\begin{align*}
&	\int_{E}e^{\lambda\omega(x,\xi)}|\Wig_{\tau} f(x,\xi)|dxd\xi \\&\leq C_{\lambda,\mu,\mu'}D'_\mu(E)  \|e^{\lambda'\omega}f\|_2^{1-\frac{N}{2\alpha}}\|e^{\lambda'\omega}|x-\overline{x}|^\alpha f\|_2^{\frac{N}{2\alpha}} \|e^{\lambda'\omega}\hat{f}\|_2^{1-\frac{N}{2\alpha}}\|e^{\lambda'\omega}|\xi-\overline{\xi}|^\alpha\hat{f}\|_2^{\frac{N}{2\alpha}},
	\end{align*}
	where
	$$
	\lambda':=K^2[8K^4(\lambda+\mu)+\mu'],\quad D'_\mu(E)=\int_{E}e^{-\mu\omega(x,\xi)}dxd\xi.
	$$
\end{itemize}
\end{prop}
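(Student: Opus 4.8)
The plan is to mirror, representation by representation, the arguments used for the spectrogram in Propositions \ref{upfconvg} and \ref{upsptpricecompleto}: each inequality is reduced to a mutual estimate from Section \ref{section-estimates} combined with Price's Lemma (Proposition \ref{sprice}). In both parts the first step is the elementary weight splitting that produces the weighted measure of $E$: on $E$ one writes $e^{c\omega}=e^{-\mu\omega}\,e^{(c+\mu)\omega}$ and bounds the integral over $E$ by $D_\mu(E)$ (resp.\ $D'_\mu(E)$) times a \emph{global} supremum norm. That supremum norm is then controlled by the $\tau$-Wigner transform through the estimates already proved, and a final application of Proposition \ref{sprice} turns an $L^1$-norm into a product of an $L^2$-norm and a weighted dispersion.

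For part (i) I would first bound
\[
\int_E e^{2\lambda\omega(\xi)}|\hat f(\xi)|^2\,d\xi\leq D_\mu(E)\,\|e^{(\lambda+\mu)\omega}\hat f\|_\infty^2 .
\]
Next, \eqref{NV-Sp6} controls $\|e^{(\lambda+\mu)\omega}\hat f\|_\infty$ by the $L^1$-norm of $V_gf$; squaring and applying \eqref{NV-Sp7} with $p=1$ passes to $\|e^{2[K(\lambda+\mu)+\mu']\omega}Sp_gf\|_1$, the integrability of $e^{-\mu'\omega}$ on $\R^{2N}$ entering here being what fixes the admissible range of $\mu'$. The identity \eqref{spwig}, $Sp_gf=\Wig_{1-\tau}(\tilde g)\star\Wig_\tau(f)$, together with the subadditivity \eqref{sub} and Young's inequality (exactly as in \eqref{NV-Wtau5}) then yields
\[
\|e^{(\lambda+\mu)\omega}\hat f\|_\infty^2\leq C\,\|e^{\lambda'\omega}\Wig_\tau(f)\|_1,\qquad \lambda'=2K[K(\lambda+\mu)+\mu'],
\]
with $C$ depending on $g$ and $\tau$ but not on $f$. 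Since $\Wig_\tau(f)$ lives on $\R^{2N}$, applying Proposition \ref{sprice} in dimension $2N$ to $h=e^{\lambda'\omega}\Wig_\tau(f)$ (which is exactly why the hypothesis reads $\alpha>N$) replaces $\|h\|_1$ by $\|h\|_2^{1-N/\alpha}\,\||z-\overline{z}|^\alpha h\|_2^{N/\alpha}$, giving the claim.

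For part (ii) the roles of $f,\hat f$ and the representation are interchanged. I would start from
\[
\int_E e^{\lambda\omega(x,\xi)}|\Wig_\tau(f)(x,\xi)|\,dx\,d\xi\leq D'_\mu(E)\,\|e^{(\lambda+\mu)\omega}\Wig_\tau(f)\|_\infty,
\]
and control the supremum norm of the $\tau$-Wigner by the spectrogram using Corollary \ref{NV-Cor}(ii) with $p=\infty$; this is precisely the step forcing $\mu'>\frac{2N}{b}(K^2+2)$, that being the range of the exponent for which the constant $D^{(6)}_{\lambda,\mu}$ of Corollary \ref{NV-Cor} stays finite when $p=\infty$. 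The $L^\infty$-bound \eqref{NV-Sp3} for the spectrogram (taken with $p=1$, so $p'=\infty$) then gives
\[
\|e^{(\lambda+\mu)\omega}\Wig_\tau(f)\|_\infty\leq C\,\|e^{\lambda'\omega}f\|_1\,\|e^{\lambda'\omega}\hat f\|_1,\qquad \lambda'=K^2[8K^4(\lambda+\mu)+\mu'],
\]
and a final use of Proposition \ref{sprice} in dimension $N$ (hence $\alpha>N/2$), applied separately to $\|e^{\lambda'\omega}f\|_1$ and to $\|e^{\lambda'\omega}\hat f\|_1$, produces the four-factor product on the right-hand side.

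Conceptually the result is a repackaging of the Section \ref{section-estimates} estimates, so the bulk of the work is bookkeeping: one must track every factor $K$ produced by the repeated use of \eqref{sub} so that the exponents land exactly on the stated $\lambda'$ and on the stated thresholds for $\mu'$. The step demanding the most care is the weighted Young estimate in part (i): one has to verify that pushing the weight $e^{\lambda'\omega}$ through the convolution $\Wig_{1-\tau}(\tilde g)\star\Wig_\tau(f)$ leaves the factor $\|e^{\lambda'\omega}\Wig_{1-\tau}(\tilde g)\|_1$, which is finite because $g\in\cS_\omega(\R^N)$ forces $\Wig_{1-\tau}(\tilde g)\in\cS_\omega(\R^{2N})$ (so it decays faster than any $e^{-\lambda'\omega}$) and is independent of $f$, hence absorbable into $C_{\lambda,\mu,\mu'}$.
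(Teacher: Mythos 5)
Your proposal is correct and takes essentially the same route as the paper: bound the integral over $E$ by the weighted measure times a global sup norm, chain through the Section \ref{section-estimates} estimates down to $\|e^{\lambda'\omega}\Wig_\tau(f)\|_1$ in part (i) (resp. $\|e^{\lambda'\omega}f\|_1\,\|e^{\lambda'\omega}\hat f\|_1$ in part (ii), via \eqref{NV-Wtau6} with $p=\infty$ and \eqref{NV-Sp3} with $p=1$), and finish with Proposition \ref{sprice} in dimension $2N$ (resp. $N$). The only, inessential, deviation is in part (i): you reach the Wigner through $\|e^{c\omega}Sp_gf\|_1$ (\eqref{NV-Sp7} with $p=1$, then \eqref{spwig} and Young's $L^1\times L^1\to L^1$ inequality), while the paper goes through $\|e^{c\omega}Sp_gf\|_\infty$ (\eqref{NV-Sp7} with $p=\infty$ — which is what produces the stated threshold $\mu'>\tfrac{2N}{b}$ — followed by \eqref{sp-less-wig} with $p=1$); both chains land on the same $\lambda'=2K[K(\lambda+\mu)+\mu']$.
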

\begin{proof}
	(i) We first observe that, proceeding as in the proof of \eqref{local-eq1}, we get
	\begin{equation}\label{new-last-Sp}
		\int_E e^{2\lambda\omega(\xi)} |\hat{f}(\xi)|^2\,d\xi\leq D_\mu(E)\tilde{C}_{\lambda,\mu,\mu'} \| e^{2[K(\lambda+\mu)+\mu']\omega} Sp_g f\|_\infty
	\end{equation}
	for $\lambda,\mu\geq 0$ and $\mu'>\frac{2N}{b}$. Then, by \eqref{sp-less-wig} we get
	\begin{align*}
	    \int_{E} e^{2\lambda\omega(\xi)}|\hat{f}(\xi)|^2\, d\xi&\leq  {C'_{\lambda,\mu,\mu'}}D_\mu(E)\|e^{\lambda'\omega}\Wig_{\tau} (f)\|_{1},
	\end{align*}	
with $\lambda'=2K[K(\lambda+\mu)+\mu']$. Therefore, using Proposition \ref{sprice} with $\alpha>N$ applied to the function $e^{\lambda'\omega} \Wig_{\tau}(f)$, we obtain the conclusion.
	
	(ii) Fixed $f\in \cS_\omega(\R^N)$, $\overline{z}\in\R^{2N}$ and $\lambda,\mu\geq0$, $\mu'>\frac{2N}{b}(K^2+2)$, applying  \eqref{NV-Wtau6} with $p=\infty$ we have
	\begin{align*}
	&\int_{E}e^{\lambda\omega(x,\xi)}|\Wig_{\tau} f(x,\xi)|dxd\xi\leq D'_\mu(E)  \|e^{(\lambda+\mu)\omega}\Wig_{\tau} (f)\|_\infty \\& \; \leq  D'_\mu(E)D^{(6)}_{\lambda+\mu,\mu'}\| e^{\lambda''\omega}Sp_g f\|_\infty,
	\end{align*}
	where $D'_\mu(E):=\int_{E}e^{-\mu\omega(x,\xi)}dx d\xi$ and $\lambda'':=8K^4(\lambda+\mu)+\mu'$. Proceeding as in Proposition \ref{upsptpricecompleto}(i) we get the thesis.
\end{proof}

Finally, we give similar results for general time-frequency representation in the Cohen class.

\begin{prop}\label{wigprice1}
	Let $\omega$ be a weight function and fix $g\in \mathcal{S}_{\omega}(\mathbb{R}^N)$, $g\neq0$, and $\overline{z}=(\overline{x},\overline{\xi})\in\R^{2N}$.
	\begin{itemize}
	\item [(i)] Let $E\subset\R^{N}$ be a Lebesgue measurable set, and $\alpha>N$. Fix a kernel $\sigma\in\cO'_{C,\omega}(\R^{2N})$ with $0\notin\overline{Im(\widehat{\sigma})}$. Then for every $\lambda,\mu \geq0$, $\mu'>\frac{2N}{b}$ there exists $C_{\lambda,\mu,\mu'}>0$ such that for every $f\in\cS_\omega(\R^N)$
		\begin{align*}
	\int_{E} e^{2\lambda\omega(\xi)}|\hat{f}(\xi)|^2\, d\xi\leq {C_{\lambda,\mu,\mu'}}D_\mu(E)\|e^{\tilde{\lambda}\omega} Q_{\sigma}f\|_{2}^{1-\frac{N}{\alpha}}\||z-\overline{z}|^\alpha e^{\tilde{\lambda}\omega}Q_{\sigma}f\|_2^{\frac{N}{\alpha}},
	\end{align*}
	where
	$$
	\tilde{\lambda}:=2K[K(\lambda+\mu)+\mu'],\quad D_\mu(E):=\int_{E}e^{-2\mu\omega(\xi)}\, d\xi.
	$$
   	\item [(ii)] Let $E\subset\R^{2N}$ be a Lebesgue measurable set, and $\alpha>\frac{N}{2}$. Fix a kernel $\sigma\in\mathcal{S}_\omega'(\mathbb{R}^{2N})$ satisfying $\|e^{\nu\omega}\sigma\|_1 <\infty$ for every $\nu\geq 0$. Then for every $\lambda,\mu \geq0$, $\mu'>\frac{2N}{b}(K^2+2)$ there exists $C_{\lambda,\mu,\mu'}>0$ such that for every $f\in\cS_\omega(\R^N)$
	\begin{align*}
&	\int_{E}e^{\lambda\omega(x,\xi)}|Q_{\sigma} f(x,\xi)|dxd\xi \\&\leq C_{\lambda,\mu,\mu'}D_\mu(E) \|e^{\lambda'\omega}f\|_2^{1-\frac{N}{2\alpha}}\|e^{\lambda'\omega}|x-\overline{x}|^\alpha f\|_2^{\frac{N}{2\alpha}} \|e^{\lambda'\omega}\hat{f}\|_2^{1-\frac{N}{2\alpha}}\|e^{\lambda'\omega}|\xi-\overline{\xi}|^\alpha\hat{f}\|_2^{\frac{N}{2\alpha}},
	\end{align*}
	where
	$$
	\lambda':=K^2[8K^5(\lambda+\mu)+\mu'],\quad D'_\mu(E)=\int_{E}e^{-\mu\omega(x,\xi)}dxd\xi.
	$$
\end{itemize}
\end{prop}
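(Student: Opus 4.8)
Both parts follow the scheme already used for the spectrogram in Propositions \ref{upfconvg} and \ref{upsptpricecompleto} and for the $\tau$-Wigner transform in Proposition \ref{wigprice}: reduce the left-hand side to a weighted sup-norm of the spectrogram, transfer that control to the target representation $Q_\sigma$ by means of the estimates of Section \ref{section-estimates}, and finally invoke the Price lemma (Proposition \ref{sprice}). For part (i) I would begin exactly as in the derivation of \eqref{new-last-Sp}: pull $e^{-2\mu\omega(\xi)}$ out of the integral, bound $\|e^{(\lambda+\mu)\omega}\hat f\|_\infty$ via \eqref{NV-Sp6}, and estimate the resulting $L^1$-norm of $V_gf$ by $\|e^{-\mu'\omega}\|_{L^1(\R^{2N})}\,\|e^{\nu\omega}Sp_gf\|_\infty^{1/2}$ (this is where the integrability condition $\mu'>\tfrac{2N}{b}$ enters). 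This yields
$$
\int_E e^{2\lambda\omega(\xi)}|\hat f(\xi)|^2\,d\xi\leq D_\mu(E)\,\tilde C_{\lambda,\mu,\mu'}\,\|e^{\nu\omega}Sp_gf\|_\infty,\qquad \nu:=2[K(\lambda+\mu)+\mu'].
$$

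The crux of part (i), and the only place where the hypotheses on $\sigma$ are actually used, is the passage from the spectrogram to $Q_\sigma$. By \eqref{spwig} with $\tau=\tfrac12$ the spectrogram is itself a Cohen representation, $Sp_gf=\Wig(\tilde g)\star\Wig(f)=Q_{\sigma_1}f$ with kernel $\sigma_1:=\Wig(\tilde g)$; since $g\in\cS_\omega(\R^N)$ and $\cS_\omega$ is stable under reflection and under $\Wig$, we have $\sigma_1\in\cS_\omega(\R^{2N})$. Hence $\sigma_1$ and $\sigma_2:=\sigma$ satisfy the hypotheses of Theorem \ref{QsigmastimaconQ}, and \eqref{QsigmaconQ} with $p=1$ gives $\|e^{\nu\omega}Sp_gf\|_\infty\leq D_\nu^{(12)}\|e^{K\nu\omega}Q_\sigma f\|_1$. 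It then remains to apply Proposition \ref{sprice} in dimension $2N$ — whence the restriction $\alpha>N$ and the exponents $1-\tfrac N\alpha$, $\tfrac N\alpha$ — to the function $h:=e^{\tilde\lambda\omega}Q_\sigma f$ with $\tilde\lambda:=K\nu=2K[K(\lambda+\mu)+\mu']$, and to collect all factors independent of $E$ and $f$ into $C_{\lambda,\mu,\mu'}$.

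Part (ii) runs parallel to Proposition \ref{wigprice}(ii), using Corollary \ref{Qsigmastimaconwig}(iii) in place of \eqref{NV-Wtau6}. First pull $e^{-\mu\omega(x,\xi)}$ out of the integral to get $\int_E e^{\lambda\omega}|Q_\sigma f|\,dxd\xi\leq D'_\mu(E)\,\|e^{(\lambda+\mu)\omega}Q_\sigma f\|_\infty$, and then apply \eqref{neq-Q-Sp} with $p=\infty$; the requirement $\mu'>\tfrac{2N}{b}(K^2+2)$ is precisely the condition of Corollary \ref{Qsigmastimaconwig}(iii) for $p=\infty$, and it produces $\|e^{(\lambda+\mu)\omega}Q_\sigma f\|_\infty\leq D^{(11)}_{\lambda+\mu,\mu'}\|e^{\lambda''\omega}Sp_gf\|_\infty$ with $\lambda'':=8K^5(\lambda+\mu)+\mu'$. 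From here I would repeat verbatim the last step of Proposition \ref{upsptpricecompleto}(i): estimate $\|e^{\lambda''\omega}Sp_gf\|_\infty$ by \eqref{NV-Sp3} with $p=1$, which isolates the two $L^1$-factors $\|e^{K^2\lambda''\omega}f\|_1$ and $\|e^{K^2\lambda''\omega}\hat f\|_1$ (the sup-norms of $g,\hat g$ being absorbed into the constant), and apply Proposition \ref{sprice} in dimension $N$ (so $\alpha>\tfrac N2$) to each of them. This produces the four displayed dispersion factors with $\lambda'=K^2\lambda''=K^2[8K^5(\lambda+\mu)+\mu']$.

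The genuinely routine part throughout is the bookkeeping of weight exponents: each invocation of \eqref{sub} inserts a factor $K$ in front of the weight, and the stated values of $\nu$, $\tilde\lambda$, $\lambda''$ and $\lambda'$ are exactly what these nested applications produce. The one conceptual step I expect to need care with is the identification of $Sp_gf$ as a Cohen representation with an $\cS_\omega$-kernel in part (i): this is what makes Theorem \ref{QsigmastimaconQ} — whose hypotheses require $\sigma\in\cO'_{C,\omega}(\R^{2N})$ with $0\notin\overline{\mathrm{Im}(\widehat\sigma)}$, so that $\widehat{\sigma_1}/\widehat\sigma\in\cS_\omega(\R^{2N})$ — applicable, thereby transferring the $L^1$ control from $Sp_gf$ to $Q_\sigma f$.
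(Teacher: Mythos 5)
Your proposal is correct and follows essentially the same route as the paper's own proof: part (i) reduces to the weighted sup-norm of the spectrogram via \eqref{new-last-Sp}, identifies $Sp_gf=Q_{\sigma_1}f$ with $\sigma_1=\Wig(\tilde g)\in\cS_\omega(\R^{2N})$ so that Theorem \ref{QsigmastimaconQ} (i.e.\ \eqref{local-us-est} with $p=1$) transfers the control to $\|e^{\tilde\lambda\omega}Q_\sigma f\|_1$, and then applies Proposition \ref{sprice} in dimension $2N$; part (ii) uses \eqref{neq-Q-Sp} with $p=\infty$ and then repeats the final step of Proposition \ref{upsptpricecompleto}(i). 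The exponent bookkeeping ($\tilde\lambda=2K[K(\lambda+\mu)+\mu']$, $\lambda'=K^2[8K^5(\lambda+\mu)+\mu']$) and the roles of the thresholds on $\mu'$ match the paper exactly.
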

\begin{proof}
	(i) Fixed $f\in\cS_\omega(\R^N)$ and $\lambda,\mu\geq0$, by \eqref{new-last-Sp} we have
	\begin{align*}
	    \int_{E} e^{2\lambda\omega(\xi)}|\hat{f}(\xi)|^2\, d\xi &\leq  {\tilde{C}_{\lambda,\mu,\mu'}}D_\mu(E)\|e^{\lambda'\omega}Sp_g f\|_{\infty}\\
	    &={\tilde{C}_{\lambda,\mu,\mu'}}D_\mu(E)\|e^{\lambda'\omega}\left(\Wig ({\tilde{g}})\star \Wig (f)\right)\|_{\infty},
	\end{align*}
	for $\mu'>\frac{2N}{b}$, where $\lambda'=2[K(\lambda+\mu)+\mu']$ and the last equality follows from \eqref{spwig} with $\tau=\frac{1}{2}$. Set $\sigma_1:=\Wig ({\tilde{g}}) \in \cS_\omega(\R^{2N})$. Since $\sigma_1$ and $\sigma$ satisfy the hypotheses of Theorem \ref{QsigmastimaconQ}, from \eqref{local-us-est} with $p=1$ we then have
	\begin{align*}
	    \int_{E} e^{2\lambda\omega(\xi)}|\hat{f}(\xi)|^2\, d\xi &\leq  {\tilde{C}_{\lambda,\mu,\mu'}}D_\mu(E)\|e^{\lambda'\omega}Q_{\sigma_1} f\|_\infty\\
	    &\leq {\tilde{C}'_{\lambda,\mu,\mu'}}D_\mu(E)\left\|e^{K\lambda'\omega}\mathcal{F}^{-1}\left(\frac{\widehat{\sigma_1}}{\widehat{\sigma}}\right)\right\|_\infty \|e^{K\lambda'\omega}Q_{\sigma}f\|_1.
	\end{align*}
The conclusion then follows from Proposition \ref{sprice} applied to $\|e^{K\lambda'\omega}Q_{\sigma}f\|_1$.
	
	(ii) Fixed $f\in \cS_\omega(\R^N)$ and $\lambda,\mu\geq0$, $\mu'>\frac{2N}{b}(K^2+2)$, applying  \eqref{neq-Q-Sp} with $p=\infty$ we have
	\begin{align*}
	&\int_{E}e^{\lambda\omega(x,\xi)}|Q_\sigma f(x,\xi)|dxd\xi\leq D'_\mu(E)  \|e^{(\lambda+\mu)\omega}Q_\sigma f\|_\infty \\& \; \leq  D'_\mu(E)D^{(11)}_{\lambda+\mu,\mu'}\| e^{\lambda''\omega}Sp_g f\|_\infty,
	\end{align*}
	where $D'_\mu(E):=\int_{E}e^{-\mu\omega(x,\xi)}dx d\xi$ and $\lambda'':=8K^5(\lambda+\mu)+\mu'$. Proceeding as in Proposition \ref{upsptpricecompleto}(i) we get the thesis.
\end{proof}

\begin{rem}
Propositions \ref{wigprice} and \ref{wigprice1} give, for $\Wig_\tau$ and $Q_\sigma$, results that correspond to Propositions \ref{upfconvg}(i) and \ref{upsptpricecompleto}(i) for the spectrogram. Proceeding in a similar way it is not difficult to prove, for $\Wig_\tau$ and $Q_\sigma$, results of the kind of Propositions \ref{upfconvg}(ii) and \ref{upsptpricecompleto}(ii)-(iii).
\end{rem}

\vspace*{5mm}
{\bf Acknowledgments.}
The authors of the present publication are members of the INdAM-GNAMPA group (National Group for Mathematical Analysis, Probability and their Applications). All of them were partially supported by the INdAM-GNAMPA Project 2023 ``Analisi di Fourier e Analisi Tempo-Frequenza di Spazi Funzionali e Operatori", CUP\_E53C22001930001 and by the INdAM-GNAMPA Project 2024 ``Analisi di Gabor ed analisi microlocale: connessioni e applicazioni a equazioni a derivate parziali'', CUP\_E53C23001670001.

\end{document}